\newtheorem{theorem}{Theorem}[section]
\newtheorem*{theorem-nonum}{Theorem}
\newtheorem{prop}[theorem]{Proposition}
\newtheorem{lemma}[theorem]{Lemma}
\newtheorem{cor}[theorem]{Corollary}
\newtheorem{definition}[theorem]{Definition}
\newtheorem{maintheorem}{Theorem}
\numberwithin{equation}{section}
\newcommand{\vep}{\varepsilon}
\renewcommand{\epsilon}{\vep}
\newcommand{\tr}{ \mathrm{tr} }
\newcommand{\riem}{\mathrm{R}}
\newcommand{\Rm}{\mathrm{Rm}}
\newcommand{\Rc}{\mathrm{Rc}}
\newcommand{\inn}[3]{\left\langle #1, #2 \right\rangle_{#3}}
\DeclareFontFamily{U}{MnSymbolC}{}
\DeclareSymbolFont{MnSyC}{U}{MnSymbolC}{m}{n}
\DeclareFontShape{U}{MnSymbolC}{m}{n}{
    <-6>  MnSymbolC5
   <6-7>  MnSymbolC6
   <7-8>  MnSymbolC7
   <8-9>  MnSymbolC8
   <9-10> MnSymbolC9
  <10-12> MnSymbolC10
  <12->   MnSymbolC12}{}
\DeclareMathSymbol{\intprod}{\mathbin}{MnSyC}{'270}
\newcommand{\bS}{\mathbb{S}}
\newcommand{\bR}{\mathbb{R}}
\newcommand{\bN}{\mathbb{N}}
\newcommand{\bH}{\mathbb{H}}
\newcommand{\bB}{\mathbb{B}}
\newcommand{\Mbar}{\overline{M}}
\newcommand{\Mdbar}{\widehat{M}} 
\renewcommand{\hbar}{\overline{h}}
\newcommand{\sQ}{\mathscr{Q}}
\newcommand{\sC}{\mathcal{C}}
\newcommand{\sG}{\mathcal{G}}
\newcommand{\sB}{\mathscr{B}}
\newcommand{\gbar}{\overline{g}}
\newcommand{\ghat}{\hat{g}}
\newcommand{\hhat}{\hat{h}}
\newcommand{\qhat}{\hat{q}}
\newcommand{\qbar}{\overline{q}}
\newcommand{\cF}{\mathcal{F}}
\newcommand{\cI}{\mathcal{I}}
\newcommand{\cV}{\mathcal{V}}
\newcommand{\cU}{\mathcal{U}}
\newcommand{\pa}{\partial}
\newcommand{\CI}{\mathcal{C}^{\infty}}
\newcommand\cf{cf\@. }
\begin{document}

\title{Geometrically finite Poincar\'e-Einstein metrics}

\begin{abstract}
We construct new examples of Einstein metrics by perturbing the conformal infinity of geometrically finite hyperbolic metrics and by applying the inverse function theorem in suitable weighted H\"older spaces.
\end{abstract}

\author{Eric Bahuaud}
\address{Department of Mathematics, Seattle University}
\email{bahuaude(at)seattleu.edu}
\author{Fr\'ed\'eric Rochon}
\address{D\'epartement des math\'ematiques, UQ\`AM}
\email{rochon.frederic(at)uqam.ca}

\subjclass[2010]{53C21; 53C25, 58J05, 35J57, 35J70}
\keywords{geometrically finite hyperbolic metrics, asymptotically hyperbolic metrics, Poincar\'e-Einstein metrics}
\date{\today}

\maketitle
\section{Introduction}

Let $\Mbar$ be a smooth compact $n$-manifold with boundary $\partial M$ and interior $M$.  Let $\rho \in C^{\infty}(\Mbar)$ be a non-negative function vanishing to first order precisely on $\partial M$.  A Riemannian metric $g$ on $M$ is $C^{k,\alpha}$ conformally compact if $\rho^2 g$ extends to a $C^{k,\alpha}$ metric on $\Mbar$.  The induced conformal class of metrics on $\partial M$, $[(\rho^2 g)|_{T\partial M}]$ is the conformal infinity of $g$.

\

The Poincar\'e metric on the unit ball $\bH^n := \bB^{n} = \{x\in \bR^n: |x| < 1\}$ given by $h=\frac{4}{(1-|x|^2)^2} \sum (dx^i)^2$ is an example of a conformally compact Einstein metric.  Here $\rho = (1-|x|^2)/2$, and the conformal infinity is the conformal class of the round metric $\hat{h}$ on $\bS^{n-1}$. In 1991, via the inverse function theorem, Robin Graham and John M. Lee proved the existence of asymptotically hyperbolic Einstein metrics on the unit ball with prescribed conformal infinity sufficiently close to the Poincar\'e model \cite{GrahamLee}.  
Such metrics are variously called Poincar\'e-Einstein, or asymptotically hyperbolic Einstein or conformally compact Einstein metrics.  

\

This result was subsequently generalized by Biquard \cite{Biquard} in complex, quarternionic and octonionic hyperbolic spaces through deformations of the appropriate notion of conformal infinity.  In another direction, Lee \cite{Lee} extended the perturbation result of \cite{GrahamLee} by deforming  the conformal infinity of more general Poincaré-Einstein metrics.  To be able to apply the inverse function theorem, Lee needed the $L^2$-kernel of the linearized problem to vanish, a condition automatically satisfied when the Poincaré-Einstein metric has nonpositive sectional curvature, for instance when it is a convex co-compact hyperbolic metric.     More recently, Albin, in \cite{Albin}, generalizes \cite{GrahamLee}, still by deforming the conformal infinity of the hyperbolic space, but replacing the Einstein equation by an equation involving a linear combination of Lovelock tensors, obtaining in this way examples of Poincaré-Lovelock metrics.  In a very different direction, let us point out also that Enciso and Kamran in \cite{EK2019} have obtained a Lorentzian version of \cite{GrahamLee, Lee}.   

\

More generally, instead of convex co-compact hyperbolic metrics, one can consider  geometrically finite hyperbolic metrics.  As observed by Mazzeo and Philips \cite{MazzeoPhillips}, these metrics admit a natural compactification to a manifold with corners with at most codimension 2 corners.  When the volume is infinite, there is a special boundary hypersurface,  $H_0$, corresponding to the directions of maximal volume growth, while the other boundary hypersurfaces are mutually disjoint and correspond to cusps.  A cusp of maximal rank yields an end of finite volume and the corresponding boundary hypersurface $H_i$ is a closed submanifold disjoint from all the other boundary hypersurfaces.  If instead the cusp is of intermediate rank, then the corresponding boundary hypersurface $H_i$ is a manifold with boundary with non-empty intersection with $H_0$, so that $\pa H_i = H_i\cap H_0$.  When the metric is convex co-compact, $H_0$ is the only boundary hypersurface and we recover the usual compactification to a manifold with boundary.  

\

Coming back to a general geometrically finite hyperbolic metric $g$, this suggests that if $\rho$ now denotes a boundary defining function of $H_0$, then 
$\rho^2g|_{TH_0}$ should be a representative of the conformal infinity of $g$.  As explained in \S~\ref{0F.0}-\ref{s:gfhyp}, the representative $\rho^2g|_{TH_0}$ is naturally a complete metric on $H_0\setminus \pa H_0$ with foliated cusps at infinity in the sense of \cite{R2012}.   Deforming the conformal class of $\rho^2g|_{TH_0}$, one could therefore hope to obtain a corresponding Einstein deformation of the metric $g$, at least for small enough deformations. To avoid certain complications near the corners, one can even restrict to deformations of $\rho^2g|_{TH_0}$ compactly supported in an open set $U$ of $H_0$ with closure disjoint form $\pa H_0$, since near such an open set, the metric $g$ locally takes the form of a conformally compact metric.  The main purpose of the present paper is to establish that indeed, such Einstein deformations are possible, yielding a new class of complete Einstein metrics that we call  geometrically finite Poincaré-Einstein metrics.  The precise result is as follows (see Theorem~\ref{mt.1} below for a statement with more details on the regularity of the metric).

\begin{maintheorem} \label{thm:main}
For $n \geq 4$, and $k > n$, let $(M^n,h)$ be a geometrically finite hyperbolic metric with $n_c$ intermediate rank cusps and no cusps of maximal rank.  If $n=4$, suppose moreover that each cusp is of rank $1$.  Then there exists $\vep > 0$ such that for any smooth compactly supported symmetric $2$-tensor $\qhat\in C^{\infty}_c(U;T^*U\otimes T^*U)$ and perturbation $\hat{g} = \hat{h} + \qhat$ with $\|\qhat\|_{C^{k,\alpha}(U)} < \vep$, there is a geometrically finite asymptotically hyperbolic metric with cusps $g$ on $M$ such that 
$\rho^2 g$ is continuous on $U$ with $(\rho^2 g)|_{TU} = \hat{g}$ and  $g$ is Einstein, i.e. 
\[ \Rc(g) + (n-1)g = 0. \]
\end{maintheorem}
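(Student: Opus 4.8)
The plan is to run the Graham--Lee \cite{GrahamLee} and Lee \cite{Lee} scheme over the foliated-cusp compactification of \cite{MazzeoPhillips, R2012}: break the diffeomorphism invariance of the Einstein equation, manufacture a high-order approximate solution from the boundary data $\hat g$, show the linearization is invertible on suitable weighted H\"older spaces, solve the nonlinear equation by the inverse function theorem, and then restore the gauge.

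First I would fix a reference metric $t$ that equals $h$ outside a compact neighborhood of $\overline U$ and realizes the perturbed conformal infinity near $U$, and consider the (elliptic) DeTurck--Bianchi operator
\[
\Phi_t(g) = \Rc(g) + (n-1) g + \delta_g^{*}\mathcal{B}_t(g), \qquad \mathcal{B}_t(g) = \delta_g\!\left(t - \tfrac12 (\tr_g t)\, g\right),
\]
so that a zero of $\Phi_t$ near $t$ is Einstein once one checks $\mathcal{B}_t(g)\equiv 0$; the latter follows from the contracted second Bianchi identity together with a uniqueness statement for a vector-Laplacian-type operator on $1$-forms in the relevant weighted space. Next, since $\overline U$ is disjoint from the corners $\partial H_i = H_i\cap H_0$, near $U$ the metric $h$ is an ordinary conformally compact hyperbolic metric, so the Fefferman--Graham normal form yields from $\hat g=\hat h+\hat q$ an approximate solution $g_0$ with $g_0=h$ outside a compact neighborhood of $\overline U$, with $\rho^2 g_0|_{TU}=\hat g$, and with $\Rc(g_0)+(n-1)g_0=O(\rho^{N})$ for any prescribed $N$, the error supported near $U$. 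In particular the cusp faces and all corners remain exactly hyperbolic.

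The analytic heart is to show that the linearization $L=D\Phi_t|_{g_0}$ --- which away from the compact perturbation region is the Graham--Lee operator of the exact geometrically finite hyperbolic metric, i.e.\ $\tfrac12\nabla^*\nabla$ plus a zeroth-order curvature term on symmetric $2$-tensors --- is an isomorphism between weighted H\"older spaces $C^{k,\alpha}_{w}(M;S^2T^*M)\to C^{k-2,\alpha}_{w}(M;S^2T^*M)$ for a suitable multi-weight $w$ (a weight $\delta$ at $H_0$ and compatible weights at the cusp faces). I would argue in two steps. \emph{(a) Fredholmness.} In the foliated-cusp (fibered-boundary) calculus of \cite{MazzeoPhillips, R2012} one verifies the appropriate full ellipticity and computes the normal operators of $L$ at $H_0$, at each cusp face $H_i$, and at the corners. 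At $H_0$ this is the Graham--Lee indicial operator, invertible for $\delta$ off a discrete set (symmetric about $\tfrac{n-1}{2}$ by formal self-adjointness); at a rank-$k_i$ cusp face, separating the flat torus directions reduces the normal operator to a model operator on the lower-dimensional factor, again invertible off a discrete set of weights; at a corner the two conditions must hold simultaneously. Choosing $w$ in the intersection of the resulting open windows makes $L$ Fredholm. It is precisely here that one needs $n\ge4$, and $k_i=1$ when $n=4$: these assumptions keep the intersection of admissible windows non-empty, whereas for higher-rank cusps in dimension $4$ the indicial roots coming from the torus directions would close it up. \emph{(b) Triviality of kernel and cokernel.} For $w$ in that window near the self-adjoint weight, $L$ has index $0$, and the indicial-root structure forces any element of $\ker L$ to lie in $L^2(M,dv_h)$. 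Since $h$ has constant sectional curvature $-1<0$ and the kernel element is $L^2$, the Bochner/Weitzenb\"ock argument of \cite{Lee} (which first shows such a tensor is in Bianchi gauge, then applies the Weitzenb\"ock formula to its trace-free part, handling the trace and divergence parts separately) forces $\ker L=0$; index $0$ then makes $L$ an isomorphism. Finally, because $g_0-h$ is small in $C^{k,\alpha}$ and compactly supported, the full $L$ is a small perturbation of this operator and stays an isomorphism, with inverse bounded uniformly for $\|\hat q\|_{C^{k,\alpha}(U)}$ small.

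Finally I would close the argument. Writing $g=g_0+r$, the equation $\Phi_t(g_0+r)=0$ becomes $Lr=-\Phi_t(g_0)-\mathcal{Q}(r)$ with $\mathcal{Q}$ quadratically small on small balls of $C^{k,\alpha}_w$ and $\|\Phi_t(g_0)\|$ as small as desired by the approximate-solution construction and the smallness of $\hat q$; the Banach fixed point theorem applied to $r\mapsto -L^{-1}\!\left(\Phi_t(g_0)+\mathcal{Q}(r)\right)$ produces a unique small $r$, hence a metric $g$ with $\Phi_t(g)=0$. A parallel weighted analysis of the elliptic equation satisfied by $\omega=\mathcal{B}_t(g)$ --- with normal-operator computations on $1$-forms where, once again, the hypotheses on $n$ and the cusp ranks furnish an $L^2$-admissible window and a Bochner argument yields triviality --- shows $\omega\equiv0$, so $g$ is Einstein. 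Since $g$ equals $g_0$ plus an element of $C^{k,\alpha}_w$ (and equals $h$ plus such a term outside a compact set), $g$ is a geometrically finite asymptotically hyperbolic metric with cusps, $\rho^2 g$ extends continuously to $U$, and $(\rho^2 g)|_{TU}=\hat g$. I expect the main obstacle to be step (a)--(b): identifying the exact Fredholm window for $L$ on the foliated-cusp manifold --- the normal-operator computation at the cusp faces and corners is the genuinely new point and is what dictates the dimensional hypotheses --- and then proving that kernel and cokernel vanish there, including the analogous statement for the gauge $1$-form operator used to remove the DeTurck term.
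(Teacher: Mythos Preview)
Your overall architecture---DeTurck gauge, Graham--Lee formal solution supported near $U$, invertibility of the linearization on weighted H\"older spaces, inverse function theorem, then maximum-principle removal of the gauge term---matches the paper. The substantive divergence is in how you establish the isomorphism theorem for $L$. You propose to prove Fredholmness by invoking a ``foliated-cusp (fibered-boundary) calculus'' for the hybrid $(0\text{--}\cF)$ geometry, computing normal operators at $H_0$, at each cusp face, and at the corners, then using index-zero plus an $L^2$ Bochner argument to kill kernel and cokernel. The paper explicitly considers this route and rejects it: no such calculus currently exists for this class of metrics (the references you cite give the compactification and the pure foliated-cusp calculus, but not the Fredholm package at the codimension-two corners where $0$-type and $\cF$-type behaviors interact), and the authors in fact announce developing it as future work. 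So step~(a) of your plan rests on machinery that is not available, and building it would be a paper in itself.

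What the paper does instead is more elementary and bypasses Fredholm theory entirely. It exhausts $M$ by the compact domains $M_\epsilon=\{\sigma\ge\epsilon\}$, proves \emph{uniform-in-$\epsilon$} boundary Schauder estimates for $\Delta^h+K$ on $M_\epsilon$ (handling the zero injectivity radius near cusps by passing to the universal cover, \`a la Tian--Yau quasi-coordinates), solves the Dirichlet problem on each $M_\epsilon$ using Koiso's integration-by-parts trick, and then shows by contradiction that the Schauder constants do not blow up: if they did, one extracts a nonzero limit lying in the $L^2$ kernel of $\Delta^h+K$, which the same Koiso argument shows is trivial. The dimensional hypotheses arise not from indicial-root windows in a calculus but from an explicit barrier computation: one needs $(\Delta^h+K)\sigma^\mu\ge\delta\sigma^\mu$ outside a compact set (hypothesis~(H2)) simultaneously with $C^0_\mu\subset L^2$, and for $K=-2$ this forces $\mu_0\in\bigl(\tfrac{n-1}{2},\tfrac{n-1}{2}+\tfrac{\sqrt{(n-1)^2-8}}{2}\bigr)$ and $\mu_i^2+f_i\mu_i-(n-1-f_i)\mu_0<-2$, which admits positive $\mu_i$ exactly when $n\ge5$, or $n=4$ with $f_i=1$; maximal-rank cusps are excluded because their barrier inequality has no positive solution. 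Your intuition that the restrictions come from the cusp indicial data is morally right, but the paper's derivation is a concrete ODE-type computation rather than an abstract normal-operator argument. For the gauge step, the paper uses the classical maximum principle on $|\omega_t|_g^2$ directly rather than a second weighted Fredholm analysis on $1$-forms.
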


\

 We now discuss the key aspects of the proof in a high-level manner.  As in \cite{GrahamLee}, we replace the Einstein equation above by an appropriate gauge-adjusted equation.  In particular we define an operator $Q$ that takes pairs of metrics $(g,t)$ to symmetric two-tensors by
\begin{equation} \label{definition-Q}
 Q(g,t) = \Rc(g) + (n-1)g - \delta_g^* (g t^{-1} (\delta_g (G_g t))), 
\end{equation}
where $(G_g t)_{ij} := t_{ij} - \frac{1}{2} g^{pq} t_{pq} g_{ij}$, $\delta_g$ is the divergence of a symmetric $2$-tensor and $\delta_g^*$ is its formal adjoint.  That it is enough to solve $Q(g,t) = 0$ for sufficiently regular $g$ and $t$ will follow from the maximum principle.  

\

It will be important to obtain approximate solutions to $Q(g,t) = 0$.  This is the step where our assumption that the conformal deformation is compactly supported yields important simplifications.  Indeed, the argument of Graham-Lee to construct an approximate solution is local in the directions tangent to the boundary, so can be applied directly to our setting as long as the conformal perturbations are compactly supported.  

\

Having obtained a sufficiently good asymptotic solution, the result follows by applying the inverse function theorem.  This requires an isomorphism theorem for an appropriate linear problem related to the linearization of $Q$ at the hyperbolic metric.  Writing $r = uh + r_0$ where $u$ is a smooth function and $r_0$ is trace-free with respect to $h$, we will see the existence problem for a perturbative gauge-adjusted Einstein metric reduces to the invertibility on weighted tensor field spaces of the operator
\begin{equation} \label{linearize-scrQ}
 L r = \frac{1}{2} ( (\Delta^h + 2(n-1)) (uh) + (\Delta^h - 2) r_0 ).
\end{equation}
Comparing to \cite{GrahamLee, Lee}, this is the step where a new approach is required.  This starts with the definition of the appropriate weighted Hölder spaces, where, instead of patching together descriptions in local coordinate charts, we were led to take the more global point of view of Melrose \cite[\S~2.5]{MelroseMWC} and Ammann-Lauter-Nistor \cite{ALN2004} relying on the compactification of Mazzeo-Phillips \cite{MazzeoPhillips}.  

\

Given these weighted Hölder spaces, the isomorphism theorem is proved using a suitable exhaustion of the geometrically finite hyperbolic manifold by bounded domains with smooth boundaries.  On these domains, we rely on the argument of Koiso as explained in \cite[Proof of Theorem~A]{Lee} to show that the linearized problem with Dirichlet boundary conditions is an isomorphism.  To obtain an isomorphism for the whole manifold, one needs though to have uniform Schauder estimates on these domains, that is, an analogue of the basic estimate of \cite{GrahamLee}.  As in \cite{GrahamLee}, we can still use a barrier function to get good control outside some large compact set, which helps in determining weights for which we will have an isomorphism theorem.  In particular, when there are maximal rank cusps, we can only get isomorphism theorems for unbounded weight functions in the cusp ends of finite volume, making them unsuitable for a possible application of the inverse function theorem.  This is the reason why maximal rank cusps are excluded in Theorem~\ref{thm:main}, as well as cusps of rank 2 when $n=4$.   

\

Unfortunately however, the part of the argument of \cite{GrahamLee} giving uniform control on domains is very particular to the model hyperbolic space and does not apply to our setting.   On the other hand, the approach of Lee \cite{Lee}, which passes through a careful study of Fredholm properties of geometric operators (see also the work of Mazzeo and Melrose \cite{MM1987,Mazzeo88,MazzeoEdge} on that matter), could possibly be adapted to our setting, but would definitely require a substantial amount of work.   We found instead a more direct route to obtain the isomorphism theorem.  Our strategy is to suppose that the Schauder estimates degenerate as the domains exhaust the manifold to derive a contradiction by obtaining a sequence of tensor fields that converges to a nonzero element of the (trivial) kernel of the linearized operator.  One delicate aspect in this argument is that because of the cusps of intermediate rank,  while the corresponding hyperbolic metric has constant curvature, it is not of bounded geometry since its injectivity radius vanishes.  As in \cite{Tian-Yau}, near a cusp, one can nevertheless get uniform local Schauder estimates by passing to a suitable cover, that is, quasi-coordinates in the terminology of  \cite{Tian-Yau}.  On the other hand, on say a convex co-compact hyperbolic manifold, our isomorphism theorem gives a shorter and simpler proof of the perturbation result of \cite{Lee}.  The price to pay with this method however is that one needs the  corresponding weighted H\"older spaces to lie in $L^2$.  This restricts the regularity of the boundary metric encoded as $k$ in the statement of the theorem.  

\

Even if the more detailed statement of our main result, Theorem~\ref{mt.1} below, gives some control on the behaviour of the metric at infinity, especially away from the cusps, one could ask if stronger regularity results hold.  For instance, by \cite{Anderson, CDLS2005, Helliwell,BH2014}, we know that Poincaré-Einstein metrics admit a polyhomogeneous expansion at infinity, so one could ask more generally if the geometrically finite Poincaré-Einstein metrics of Theorem~\ref{thm:main} admit a polyhomogeneous expansion with respect to the compactification of Mazzeo-Phillips.  In the hope of making progress on this question and inspired by \cite{MM1987}, we are planning in a subsequent work to develop a pseudodifferential calculus adapted to geometrically finite Poincaré-Einstein metrics.  Additionally, we expect it should be possible to relax the hypothesis of compactly supported perturbations in our result to perturbations lying in a H\"older space adapted to the foliated cusp metric at infinity.  This raises the possibility of understanding the extent to which the results of conformally compact geometry can be adapted to this more general conformally foliated cusp setting.

\

The remainder of this paper is structured as follows.  In \S \ref{0F.0} we introduce the class of $(0-\cF)$-metrics which are conformally related to geometrically finite hyperbolic metrics.  In \S \ref{s:gfhyp} we introduce geometrically finite hyperbolic metrics and their compactification to a suitable manifold with corners.  Suitable function spaces for our analysis are defined in \S \ref{s:func-spaces}, and \S \ref{s:schauder} states and proves a uniform boundary Schauder estimate for a certain exhaustion of $M$.  In \S \ref{s:linear} we prove an isomorphism theorem for Laplace operators, and study the application to our nonlinear problem.  We then prove Theorem \ref{thm:main} in \S \ref{s:proof}.

\

\textbf{Acknowledgements.}  We are happy to acknowledge useful conversations with Robin Graham, Jack Lee and Rafe Mazzeo.  EB gratefully acknowledges the hospitality of the CIRGET laboratory and the support of the Centre de Recherches Math\'ematiques through their CRM Simons Professor program, and the support of a Simons Foundation grant (\#426628, E. Bahuaud).  FR was supported by NSERC and a Canada Research chair.

\section{$(0-\cF)$-metrics} \label{0F.0}

In this section we introduce a Lie algebra of vector fields on a suitable compact manifold with corners and associated metrics that are conformal to geometrically finite hyperbolic metrics.

\

Let $X$ be a compact manifold with corners of dimension $n$.  Let $n_{ir}$ and $n_{mr}$ be non-negative integers and set $n_c = n_{ir} + n_{mr}$.  Suppose that $H_0,H_1,\ldots, H_{n_c}$ is an exhaustive list of the boundary hypersurfaces of $X$ such that 
\begin{equation}\label{0F.1}
   H_i\cap H_j=\emptyset \;  \Longleftrightarrow \; i\ne j, \quad \forall\; i,j\in\{1,\ldots,n_c\}.
\end{equation}
Moreover, the first $n_{ir}$ boundary hypersurfaces distinct from $H_0$ satisfy
\begin{equation}\label{0F.1a}
  H_0\cap H_j\ne \emptyset \quad \forall \; j \in \{1, \ldots, n_{ir}\},
\end{equation}
while the final $n_{mr}$ boundary hypersurfaces satisfy
\begin{equation}\label{0F.1b}
 H_0 \cap H_j = \emptyset \quad \forall  j \in \{n_{ir}+1, \ldots, n_c\}.
\end{equation}
Hereafter we use the convention that a greek index takes values within the index set $\{1, \ldots, n_{ir}\}$.  Thus, $X$ only has codimension $2$ corners and the boundary hypersurfaces meeting at these corners are themselves compact manifolds with boundary with 
\begin{equation}
  \pa H_0= \bigcup_{1 \leq \alpha \le n_{ir}} H_{\alpha}\cap H_0,  \quad \mbox{and} \quad \pa H_{\alpha}= H_{\alpha} \cap H_0, \; \forall \alpha \in \{1, \ldots, n_{ir}\}.
\label{0F.2}\end{equation}
Let $x_0, x_1,\ldots, x_{n_c}$ be boundary defining functions for $H_0,\ldots, H_{n_c}$ respectively.  Without loss of generality, assume that for each $i$, $x_i\equiv 1$ outside a tubular neighbourhood of $H_i$ so that for $i\ge 1$, $x_i=1$ near $H_j$ for $j\ge 1$ with $j\ne i$.  

\

For $\alpha \in \{1, \ldots, n_{ir}\}$, suppose that there is a smooth foliation $\cF_{\alpha}$ of $H_{\alpha}$ \textbf{compatible} with the boundary defining function $x_0$ in the sense that the level sets of $x_0|_{H_{\alpha}}$ near $H_0\cap H_{\alpha}$ are tangent to the foliation, that is, any leaf intersecting them is in fact contained in them.  This implies in particular that $\cF_{\alpha}$ restricts to give a foliation $\cF_{0\alpha}:= \cF_{\alpha}|_{\pa H_{\alpha}}$   on $\pa H_{\alpha}$.  Let us denote by $\cF$ the collection of foliations $(\cF_1,\ldots, \cF_{n_{ir}})$ on the boundary hypersurfaces $H_1,\ldots, H_{n_{ir}}$ of $X$.  Now, recall that on $X$, the Lie algebra of $b$-vector fields $\cV_b(X)$ consists of smooth vector fields $\xi\in \CI(X;TX)$ which are tangent to all boundary hypersurfaces of $X$.

\begin{definition}
The Lie algebra $\cV_{0-\cF}(X)$ of $(0-\cF)$-vector fields on $X$ consists of vector fields $\xi\in \cV_b(X)$ such that 
\begin{gather}
\label{0F.3a} \xi|_{H_0}=0; \\
\label{0F.3b} \xi|_{H_{\alpha}}\in \CI(H_{\alpha};T\cF_{\alpha})\; \quad \alpha \in \{1, \ldots, n_{ir}\}; \\
\label{0F.3c} \xi x_{i}\in x_{i}^2\CI(X) \; \quad  i \in \{1, \ldots, n_{c}\}.
\end{gather}
\end{definition}

Since these conditions are clearly preserved by the Lie bracket, we see that $\cV_{0-\cF}(X)$ is indeed a Lie subalgebra of $\cV_b(X)$ and $\CI(X;TX)$.  As the notation suggests, the definition of $\cV_{0-\cF}(X)$ depends on the collection of foliations $\cF$.  More subtly, it depends on the choice of the boundary defining functions $x_1,\ldots, x_{n_c}$ through the condition \eqref{0F.3c}. 

\

Near $H_0$, but away from the other boundary hypersurfaces, a $(0-\cF)$-vector field behaves like a $0$-vector field of  \cite{MM1987}.  Near $H_{\alpha}$, but away from $H_0$, it looks instead like a foliated cusp vector field (or $\cF_i$-vector field) of \cite{R2012}, the foliated version of fibered cusp vector fields of \cite{MM1998}.   Near $H_{i}$, $i \geq {n_{ir} +1}$, it looks like a cusp vector field (a fibred cusp vector field associated to a trivial fibration over a point).  At the corner $H_0\cap H_{\alpha}$, the $(0-\cF)$-vector fields are somewhat of a hybrid version of the $0$-vector fields of \cite{MM1987} and the foliated cusp vector fields of \cite{R2012}.  This can be described more precisely in local coordinates.  Given $p\in H_0\cap H_{\alpha}$, choose a coordinate chart $\varphi: \cU\to \Omega\subset \bR^n$ sending $p$ to $0$ with 
\begin{equation}
    \varphi= (x_0,x_{\alpha},y_1,\ldots, y_{b_{\alpha}},z_1,\ldots, z_{f_{\alpha}})
\label{0F.4}\end{equation}
where $x_0$ and $x_{\alpha}$ are the boundary defining functions chosen above and the coordinates $(y,z)$ are such that that the distribution $T\cF_{\alpha}$ associated to the foliation $\cF_{\alpha}$ at $x_{\alpha}=0$ is spanned by
$\frac{\pa}{\pa z_1},\ldots,\frac{\pa}{\pa z_{f_{\alpha}}}$.  In such a coordinate chart, $(0-\cF)$-vector fields are of  the form
\begin{equation}
   a x_0x_{\alpha}\frac{\pa}{\pa x_0} + bx_0x_{\alpha}^2\frac{\pa}{\pa x_{\alpha}}+ \sum_{j=1}^{b_{\alpha}} c_jx_0x_{\alpha} \frac{\pa}{\pa y_j} + \sum_{k=1}^{f_{\alpha}} d_k x_0\frac{\pa}{\pa z_k}
\label{0F.5}\end{equation}
for $a,b, c_1,\ldots,c_{b_{\alpha}}, d_1,\ldots, d_{f_{\alpha}}$  arbitrary smooth functions.  Indeed, the factor $x_0$ in each term ensures that condition \eqref{0F.3a} is satisfied, while the factors of $x_{\alpha}$ and $x_{\alpha}^2$ ensure that conditions \eqref{0F.3b} and \eqref{0F.3c} are satisfied.  

\

This shows that $\cV_{0-\cF}(X)$ is a locally free $\CI(X)$-module of rank $n$.  Let  ${}^{0-\cF}TX\to X$ be the vector bundle with fiber above $p\in X$ given by 
$$
            {}^{0-\cF}T_pX:= \cV_{0-\cF}(X)/ \cI_p\cV_{0-\cF}(X) 
$$  
where $\cI_p\subset \CI(X)$ is the ideal of smooth functions vanishing at $p$.  By the Serre-Swan theorem, this vector bundle comes with a canonical morphism of vector bundles 
\begin{equation}
   \mathfrak{a}: {}^{0-\cF}TX\to TX
\label{0F.6}\end{equation} 
inducing the identification
\begin{equation}
     \CI(X;{}^{0-\cF}TX)=\cV_{0-\cF}(X).
\label{0F.7}\end{equation}
The map \eqref{0F.6} induces an isomorphism ${}^{0-\cF}TX|_{X\setminus \pa X}\cong TX|_{X\setminus \pa X}$, but fails to be an isomorphism on $\pa X$.  We remark that the vector bundle ${}^{0-\cF}TX\to X$ is naturally a Lie algebroid with anchor map given by \eqref{0F.6} and Lie bracket on its sections induced by the identification \eqref{0F.7}, although this fact will not be used in this paper.

\

In the terminology of \cite{ALN2004}, $\cV_{0-\cF}(X)$ is a Lie structure at infinity.  There is therefore a natural class of metrics associated to it.  We say a \textbf{$(0-\cF)$-metric} on $X\setminus \pa X$ is a Riemannian metric corresponding to the restriction of a bundle metric on ${}^{0-\cF}TX$ to $T(X\setminus \pa X)$ via the identification
$T(X\setminus \pa X)\cong {}^{0-\cF}TX|_{X\setminus \pa X}$ induced by the anchor map \eqref{0F.6}.  \label{0F.9}

\

In the coordinate chart \eqref{0F.4}, an example of $(0-\cF)$-metric is given by
$$
       \frac{dx_0^2}{(x_0x_{\alpha})^2}+ \frac{dx_{\alpha}^2}{(x_0x_{\alpha}^2)^2}+ \sum_{j=1}^{b_{\alpha}} \frac{dy_j^2}{(x_0x_{\alpha})^2}+ \sum_{k=1}^{f_{\alpha}}\frac{dz_k^2}{x_0^2}.
$$

Since $(0-\cF)$-metrics come from a Lie structure at infinity, we know from \cite{ALN2004} that they are complete of infinite volume and that they are all quasi-isometric to each other.  Furthermore, it is shown in \cite{ALN2004} that their curvature and all its derivatives are bounded.  On the other hand, one can check directly in local coordinates that the injectivity radius is positive, so that $(0-\cF)$-metrics are of bounded geometry.  

\

The class of metrics we are interested is not quite the one of $(0-\cF)$-metrics, but a conformal cousin.  Define a \textbf{$(0-\cF)_c$-metric} $g_{(0-\cF)_c}$ on $X\setminus \pa X$ to be a metric of the form
$$
     g_{(0-\cF)_c}:= \left(\prod_{i=1}^{n_c}x_i\right)^2 g_{0-\cF}
$$
for some $(0-\cF)$-metric $g_{0-\cF}$. \label{0F.11} In the coordinate chart \eqref{0F.4}, an example of $(0-\cF)_c$-metric is given by
\begin{equation}
       \frac{dx_0^2}{x_0^2}+ \frac{dx_{\alpha}^2}{(x_0x_{\alpha})^2}+ \sum_{j=1}^{b_{\alpha}} \frac{dy_j^2}{(x_0)^2}+ \sum_{k=1}^{f_{\alpha}}\frac{x_{\alpha}^2dz_k^2}{x_0^2}.
\label{0F.12b}\end{equation}\label{0F.12}

Despite this extra conformal factor, a $(0-\cF)_c$-metric is still complete of infinite volume, but it becomes of finite volume outside a tubular neighbourhood of $H_0$.  More importantly, the injectivity radius is no longer positive and the curvature is typically not bounded.  Near $H_0$ but away from $H_i$ for $i\ge 1$, it still looks like a $0$-metric.  Near $H_{\alpha}$ for $\alpha \in \{1, \ldots, n_{ir}\}$ but away from $H_0$, it looks instead like a foliated cusp metric of \cite{R2012}, and near $H_i$ for $i \geq n_{ir}+1$ it looks like a cusp metric.

\

In the next section we explain how geometrically finite hyperbolic metrics provide examples of $(0-\cF)_c$-metrics.

\section{Geometrically finite hyperbolic metrics}
\label{s:gfhyp}

In this section we define the class of hyperbolic metrics of interest and fix notation.  Our description of a suitable compactification of these hyperbolic metrics to a manifold with corners was first stated by Mazzeo and Phillips \cite{MazzeoPhillips} and we refer the reader to a very thorough discussion there.  Compare also \cite{GuillarmouMazzeo} and \cite{GuillarmouMoroianuRochon}.

\

We consider the simply connected hyperbolic space of constant curvature $-1$, $\bH^n$, represented by the Poincar\'e ball model.  In what follows, $\Gamma$ will be a discrete, torsion-free group of isometries acting properly discontinuously on $\bH^{n}$.  Recall that (nontrivial) isometries of $\bH^n$ are classified as elliptic, parabolic or hyperbolic if they fix $0$, $1$ or $2$ points on the boundary sphere at infinity, $\bS^{n-1}_{\infty}$.  Our interest will be when $\Gamma$ is geometrically finite with cusps, which as we explain will entail that $\Gamma$ contains parabolic elements and that an appropriate quotient of a subset of $\overline{\bH^n}$ may be written as a union of a compact region and finitely many disjoint cusp neighbourhood ends.  When $n > 3$, this is more general that requiring that $\Gamma$ possesses a fundamental domain for the action bounded by totally geodesic hyperplanes and portions of the sphere at infinity, see \cite{Bowditch}.

\

Given any $p \in \bH^n$, the set of accumulation points $\Lambda_{\Gamma}$ (in $\overline{\bH^n} = \bB^n \cup \bS^{n-1}_{\infty}$) of the orbit $\Gamma \cdot p$ is independent of the choice of $p$ and is called the limit set of $\Gamma$.  The action of $\Gamma$ on $\bH^n$ extends to a properly discontinuous action on $\Omega_{\Gamma} = \bS^{n-1}_{\infty} \setminus \Lambda_{\Gamma}$, and $\Gamma$ acts properly discontinuously when we adjoin this conformal boundary $\overline{M} := (\bH^{n} \cup \Omega_{\Gamma}) / \Gamma$.  Note that $\Mbar$ is now a manifold with noncompact boundary.  It is this latter manifold we now decompose further.

\

Now fix a parabolic element $\gamma \in \Gamma$.  Let $p \in \bS^{n-1}_{\infty}$ be the fixed point of $\gamma$.  This point gives rise to a cusp.  Let $\Gamma_p$ be the subgroup of elements of $\Gamma$ that fix $p$.   Consider the upper-half space model of hyperbolic space $\bH^n=\{(x,y^1,\ldots,y^{n-1}: x > 0 \}$ with the fixed point $p$ placed at infinity $x=+\infty$.  Each level set of $x$, $E_{a} := \{ x = a \}$ is a horosphere on which the elements of $\Gamma_p$ acts by euclidean isometries, and as such there is a maximal normal free Abelian subgroup of $\Gamma_p$ with finite index $f$; the number $f$ is called the rank of the cusp.  We distinguish two cases: when the cusp has maximal rank $n-1$ and when the cusp has intermediate rank $1 \leq f < n-1$.  

\

As explained in \cite{GuillarmouMazzeo, MazzeoPhillips}, it is possible to find a maximal subspace $\bR^{f} \subset \bR^{n-1} = E_1$, $1 \leq f \leq n-1$ that is mapped to itself under $\Gamma_{p}$.  For a certain polyhedral fundamental domain $K \subset \bR^f$, the fundamental domain for the action of $\Gamma_p$ on $\bR^{n-1}$ becomes $\bR^{n-1-f} \times K$, where the action on the $\bR^{n-1-f}$ factor is through rotation.  The quotient $\bR^{n-1} / \Gamma_p$ is then the total space of a flat vector bundle over a compact manifold $K/\Gamma_p$.  Furthermore, $\bR^+  \times \bR^{n-1-f} \times K$ is a fundamental domain for the action of $\Gamma_p$ on $\bH^n$.  Returning to the splitting $\bH^n = \bR^+  \times \bR^{n-1-f} \times \bR^f$, define
\[ C_{p}(R) := \{ (x,y,z) \in [0,\infty) \times \bR^{n-1-f} \times \bR^f: x^2 + |y|^2 \geq R\} \subset \overline{\bH^n}. \]
The set $C_p(R)$ is invariant under $\Gamma_p$ and convex with respect to the hyperbolic metric.  In a similar way we define $C_p(R)$ for all other parabolic fixed points.  For each parabolic fixed point $p$, there exists $R > 0$ sufficiently large so that
\begin{enumerate}
\item $C_p(R) \subset \bH^n \cup \Omega_{\Gamma}$ and
\item For any $\gamma \in \Gamma \setminus \Gamma_p$, $\gamma C_p(R) \cap C_p(R) = \emptyset$.
\end{enumerate}

As a consequence, $C_p(R)$ descends to a set $\mathcal{C}_p = (\cup_{\gamma} \gamma C_p(R)) / \Gamma$ which has interior isometric to $C_p(R) / \Gamma_p$.  We call this the standard cusp region $\mathcal{C}_p$ associated to the orbit of $p$; \cf \cite[Section 3.1]{Bowditch}.  Thus we say $M := \bH^n / \Gamma$ is geometrically finite if $\Mbar = (\bH^n \cup \Omega_{\Gamma}) / \Gamma$ has a decomposition into the union of a compact set $\mathcal{C}_0$ and a finite number $n_c$ of standard cusp regions, with $n_c \geq 1$:
\[ \Mbar = (\bH^n \cup \Omega_{\Gamma}) / \Gamma = \mathcal{C}_0 \cup \left( \bigcup_{j=1}^{n_c} \mathcal{C}_{p_j} \right). \]
Note that the compact set $\mathcal{C}_0$ contains points on $\partial \Mbar=\Omega_{\Gamma}/\Gamma$.  Moving forward we assume that the enumeration of the cusps is now fixed, and with $n_{ir}$ intermediate rank cusps followed by  $n_{mr}$ cusps of maximal rank so that $n_c = n_{ir} + n_{mr}$.
We now describe the compactification by Mazzeo-Phillips \cite{MazzeoPhillips} of $\Mbar$ to a manifold with corners.

\

We start with the compactification of a single intermediate rank cusp region, $\mathcal{C}_p$ by working with a representative $C_p$.  Introduce coordinates $(x,y^1,\ldots,y^{n-1-f},z^1,\ldots,z^f) \in \bR^+ \times \bR^{n-1-f} \times K$, and recall that the cusp is at $x=+\infty$.  In each fibre $z = z_0$, invert in the unit sphere through the map $(u,v^i) = (\frac{x}{x^2 + |y|^2}, \frac{y^i}{x^2 + |y|^2})$.  This places the cusp at $(u,v) = (0,0)$ and the cusp region of the manifold is now diffeomorphic to $\bB^{n-f}_{+} \times K$.  In this region the hyperbolic metric is expressed as 
\begin{equation} \label{eqn:hyperbolic-uvcoords}
 h = \frac{du^2 + \sum_{i=1}^{n-1-f} (dv^i)^2 + (u^2+|v|^2)^2 \sum_{i=1}^f(dz^i)^2}{u^2}.
\end{equation}

We blow up $\{0\}\times \{0\}\times \bR^f$ in $\bR^+\times \bR^{n-1-f}\times \bR^f$  by introducing polar coordinates $r = \sqrt{u^2 + |v|^2}$, $\phi \in \bS^{n-1-f}_+$, with $\phi^0 = u/r > 0, \phi^i = v^i/r$.  It will be convenient to write the coordinate $\phi^0$ as $\rho$ instead.  Thus the blown up cusp neighbourhood is diffeomorphic to $\bR^+_r \times \bS^{n-1-f}_{+} \times \bR^f$ and we may compactify to a manifold with corners by adjoining the boundary faces $r = 0$ (the cusp face) and $\rho = 0$ (the $0$-face), obtaining $[0,\infty)\times \overline{\bS}^{n-1-f}_+\times \bR^f$, where $\overline{\bS}^{n-1-f}_+$ is the closure of $\bS_+^{n-1-f}$ in $\bR^{n-f}$.  Notice that $\Gamma_p$ acts naturally on this space.
Passing to the quotient, we have compactified $\mathcal{C}_p$ by
\begin{equation}\overline{\mathcal{C}}_p := [0,\vep)_r \times ((\overline{\bS}^{n-1-f}_+)_{\phi} \times \bR^f)/\Gamma_p,
\label{mc.1a}\end{equation}
a hemisphere bundle over  $[0,\vep)_r\times \bR^f/ \Gamma_p$.  
 We use $\{w^1, \ldots, w^f\}$ to represent local coordinates on $\bR^f/ \Gamma_p$.  In a local trivialization of the hemisphere bundle on $\bR^f/\Gamma_p$,  hyperbolic metric can then be written
\begin{equation} \label{mc.1}
h|_{\mathcal{C}_p} = \frac{dr^2}{r^2 \rho^2} + \frac{d\phi^2}{\rho^2} + \frac{r^2}{\rho^2} dw^2
\end{equation}

where $dw^2$ is a flat metric and $d \phi^2$ is the round metric on the $(n-1-f)$-sphere.  Observe that the function $u = r\rho$ descends to $[0,\vep)_r \times ((\bS^{n-1-f}_+)_{\phi} \times \bR^f)/\Gamma_p$ and provides a total boundary defining function at this end.

\

If the cusp is of maximal rank, then the fundamental domain for $\Gamma_p$ intersects the level set $E_1$ in a compact polyhedron $K \subset \bR^{n-1}$ that is a fundamental domain for the action of $\Gamma_p$ on $\bR^{n-1}$.  We obtain a compact flat manifold $N = \bR^{n-1} / \Gamma_p$, and then $\mathcal{C}_p$ is diffeomorphic to $[R,\infty)\times N$.  Set $r = 1/x$ in this neighbourhood and then adjoin the face $r = 0$ to obtain the compactification
$$
  \overline{\mathcal{C}}_p=\left[0,\frac{1}R\right]_r\times N.
$$ 
In the region $\mathcal{C}_p$, the hyperbolic metric may be written
\begin{equation} \label{mc.2}
h|_{\mathcal{C}_p} = \frac{dr^2}{r^2} + r^2 g_N
\end{equation}
with $g_N$ a flat metric on $N$.

\

Finally, near any point in $\mathcal{C}_0$, the compactification is already provided by $\overline{M}=(\bH^n\cup \Omega_{\Gamma})/\Gamma$ with boundary face 
given by $\Omega_{\Gamma}/\Gamma$.  Away from the cusps, say for $U$ an open set in $\Omega_{\Gamma}/\Gamma$ with closure not intersecting the cusps, the metric near this boundary can be put in the form
\begin{equation}
         h= \frac{d\rho^2+ h_{U}(\rho)}{\rho^2}    \quad \mbox{in} \quad  [0,\delta)_{\rho} \times U
\label{mc.3}\end{equation}
with $h_{U}(\rho)$ a smooth family of metrics on $U$ parametrized by $\rho\in [0,\delta)$ with $\rho=0$ corresponding to the conformal boundary at infinity.  Note that $\rho$ is a special boundary defining function here since $|d\rho|^2_{\rho^2h}\equiv 1$ near $\rho=0$.   Notice however that in the cusp neighbourhood $\mathcal{C}_p$, the manifold $\Omega_{\Gamma}/\Gamma$ is itself compactified by adding a boundary, namely 
$(\pa \overline{\bS}^{n-1-f}_+\times \bR^{f})/\Gamma_p$.

\

Collecting these observations, we have thus obtained a compactification of $\Mbar = (\bH^n \cup \Omega_{\Gamma}) / \Gamma$ to a smooth manifold with corners $\Mdbar$ with boundary hypersurfaces $H_0, H_1, \ldots, H_{n_c}$, where 
$H_0 = \overline{\Omega_{\Gamma}/\Gamma}$ is the closure of $\Omega_{\Gamma}/\Gamma$ in $\Mdbar$, and $H_i=\{0\}\times (\overline{\bS}^{n-f-1}\times \bR^f)/\Gamma_p$ is the $i$-th cusp face defined by $\{ r_i = 0\}$ in $\mathcal{C}_{p_i}$ for some $p_i\in \bS^{n-1}_{\infty}$.  Note that for each intermediate rank cusp face $H_{\alpha}$, $H_0 \cap H_{\alpha}$ is codimension two corner, and that $H_i \cap H_j = \emptyset$ for $i \neq j$.  Each compactified intermediate rank cusp neighbourhood $\overline{\mathcal{C}}_{p_\alpha}$ is the total space of a flat bundle,
\begin{equation}
 \Phi_i: \overline{\mathcal{C}}_{p_\alpha} \longrightarrow D_{\alpha}
 \label{fb.1}\end{equation}
over a compact flat manifold $D_{\alpha}$ that yields a cusp of rank $f_{\alpha} = \dim D_{\alpha}$.  Since the bundle \eqref{fb.1} is flat, its horizontal distribution is integrable, so induces a foliation on $H_{\alpha}$ that we will denote $\cF_{\alpha}$.      In terms of the decomposition \eqref{mc.1a}, a leaf of $\cF_{\alpha}$ is the image under the quotient map by the action of $\Gamma_{p_{\alpha}}$ of a submanifold of the form
$$
             \{0\}\times \{q\} \times \bR^{f_{\alpha}}\subset \{0\}\times (\overline{\bS}^{n-1-f_{\alpha}}_+)_{\phi} \times \bR^{f_{\alpha}}
$$
for some $q\in (\overline{\bS}^{n-1-f_{\alpha}}_+)_{\phi}$.

\

In what follows we may assume that the $r_i$'s are globally defined by smoothly truncating its value to be $1$ outside of its cusp neighbourhood.  We continue to let $\rho$ be a defining function for $H_0$ which corresponds to $\rho$  in the coordinate chart of \eqref{mc.1} for each $\alpha$.  The product 
\begin{equation} \label{eqn:tbdf}
\sigma = \rho \prod_{i=1}^{n_c} r_i 
\end{equation} 
furnishes a smooth global boundary defining function, and for sufficiently small $\vep > 0$ the superlevel sets
\begin{equation} \label{eqn:exhaustion} M_{\vep} = \{ p: \sigma(p) \geq \vep \} \end{equation}
furnish a compact exhaustion of $\Mdbar\setminus\pa \Mdbar$ by smooth compact manifolds with boundary.

\

We now connect these hyperbolic metrics to the metrics defined in the previous section.
\begin{lemma}
The hyperbolic metric on $\Mdbar\setminus \pa \Mdbar$ is a $(0-\cF)_c$-metric with respect to the $(0-\cF)$-Lie structure at infinity on 
$\Mdbar$ induced by boundary defining functions $\rho,r_1,\ldots, r_{n_c}$ and the collection of foliations $\cF=(\cF_1,\ldots, \cF_{n_{ir}})$.
\label{fb.2}\end{lemma}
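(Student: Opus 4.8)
The plan is to unwind the definitions and reduce the statement to a local verification in the model regions of the Mazzeo--Phillips compactification. Write $r := \prod_{i=1}^{n_c} r_i$. By the definition of a $(0-\cF)_c$-metric in \S\ref{0F.0}, it suffices to show that $g_{0-\cF} := r^{-2}h$, a priori defined only on $\Mdbar\setminus\pa\Mdbar$, extends to a smooth positive definite bundle metric on ${}^{0-\cF}T\Mdbar$. Since ${}^{0-\cF}T\Mdbar$ is locally trivialized by local frames of $(0-\cF)$-vector fields and the corresponding bundle metric is given on the interior by $(\xi,\eta)\mapsto r^{-2}h(\mathfrak a\xi,\mathfrak a\eta)$, this amounts to checking, in each of finitely many coordinate charts covering $\Mdbar$, that the Gram matrix of $h$ in the relevant local $(0-\cF)$-frame, divided by $r^2$, has smooth entries up to the boundary and is non-degenerate there; positive definiteness is then automatic by continuity from the dense interior. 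Before this, one records that $\Mdbar$, with the boundary hypersurfaces $H_0,\dots,H_{n_c}$, the defining functions $\rho,r_1,\dots,r_{n_c}$ and the foliations $\cF_\alpha$, indeed satisfies the axioms of \S\ref{0F.0}: conditions \eqref{0F.1}--\eqref{0F.2} and the truncation convention on the $r_i$ were established while constructing the compactification, and the compatibility of $\cF_\alpha$ with $x_0=\rho$ holds because a leaf of $\cF_\alpha$, being the image of some $\{0\}\times\{q\}\times\bR^{f_\alpha}$, lies entirely in the level set $\{\rho = q^0\}$.

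First I would dispatch the charts where nothing new happens. On $\Mdbar\setminus\pa\Mdbar$ the anchor $\mathfrak a$ is an isomorphism, $r>0$, and there is nothing to check. Near a point of $H_0$ disjoint from all cusps, $r\equiv 1$ and \eqref{mc.3} holds; pairing $h$ with the $0$-frame $\{\rho\pa_\rho,\rho\pa_{y^j}\}$ gives $1$ in the $\pa_\rho$ slot and $(h_U(\rho))_{jk}$ in the $y$-block, which is the classical statement that a conformally compact hyperbolic metric is a $0$-metric. Near a maximal rank cusp face $H_j$ (so $j>n_{ir}$), which is disjoint from $H_0$ and from the other cusp faces, \eqref{mc.2} holds with $r=r_j$, and on the frame $\{r_j^2\pa_{r_j},\pa_{w^k}\}$ one obtains $g_{0-\cF}(r_j^2\pa_{r_j},r_j^2\pa_{r_j})=1$ and $g_{0-\cF}(\pa_{w^k},\pa_{w^l})=(g_N)_{kl}$, both smooth and non-degenerate up to $r_j=0$.

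The substantial charts are those meeting an intermediate rank cusp face $H_\alpha$, where \eqref{mc.1} is used. Away from $H_0$, in coordinates $(r_\alpha,\phi,w)$ with $\rho=\phi^0$ a smooth positive function, a local $(0-\cF)$-frame is $\{r_\alpha^2\pa_{r_\alpha},\,r_\alpha\pa_{\phi^j},\,\pa_{w^k}\}$; dividing \eqref{mc.1} by $r_\alpha^2$ produces the diagonal blocks $\rho^{-2}$, $\rho^{-2}(d\phi^2)_{jk}$, $\rho^{-2}(dw^2)_{kl}$, all smooth and non-degenerate there since $\rho>0$ and $d\phi^2$ is a Riemannian metric on the hemisphere. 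At a corner point of $H_0\cap H_\alpha$ I would choose a chart of the form \eqref{0F.4} with $x_0=\rho=\phi^0$ a boundary defining function of the equatorial boundary of $\overline{\bS}^{\,n-1-f_\alpha}_+$, $x_\alpha=r_\alpha$, the $y^j$ coordinates on the equator, and $z^k=w^k$ spanning $T\cF_\alpha$ --- available precisely because of the compatibility noted above. Since $d\phi^2$ is smooth on the closed hemisphere, near the equator it reads $d\phi^2 = (1-\rho^2)^{-1}d\rho^2 + (1-\rho^2)\,d\omega^2$ with $d\omega^2$ the round metric of the equator; hence \eqref{mc.1} is the model $(0-\cF)_c$-metric \eqref{0F.12b} with the smooth positive factor $(1-\rho^2)^{-1}$ in front of $\rho^{-2}d\rho^2$ and with the smooth metrics $d\omega^2$, resp.\ the flat $dw^2$, in place of the Euclidean pieces. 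Dividing by $r_\alpha^2$ and pairing with the frame \eqref{0F.5}, the Gram matrix becomes block diagonal with entries $(1-\rho^2)^{-1}$, $1$, $(1-\rho^2)(d\omega^2)_{jk}$, $(dw^2)_{kl}$, all smooth and positive definite up to the corner. These charts exhaust a covering of $\Mdbar$, so $g_{0-\cF}=r^{-2}h$ is a smooth bundle metric on ${}^{0-\cF}T\Mdbar$ and $h$ is a $(0-\cF)_c$-metric.

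The only point requiring genuine care --- the step I expect to be the main obstacle --- is the corner bookkeeping: recognizing $\rho=\phi^0$ simultaneously as a defining function of $H_0$ and as a hemispherical coordinate, matching the resulting normal form of $h$ against \eqref{0F.12b} and the frame \eqref{0F.5}, and checking the compatibility of $\cF_\alpha$ with $\rho$ so that a chart of the form \eqref{0F.4} exists. Everything else is the standard identification of hyperbolic metrics in their Mazzeo--Phillips normal forms with the relevant $0$-, cusp, and foliated-cusp model metrics.
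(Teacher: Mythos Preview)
Your proposal is correct and follows the same approach as the paper's proof: both verify in each local region of the Mazzeo--Phillips compactification that the hyperbolic metric takes the form of the model $(0-\cF)_c$-metric \eqref{0F.12b}, with the corner chart near $H_0\cap H_\alpha$ being the main case. The paper's proof is a one-line sketch of this check, whereas you have written out the verification in full, including the hemispherical normal form $d\phi^2=(1-\rho^2)^{-1}d\rho^2+(1-\rho^2)\,d\omega^2$ and the resulting smooth non-degenerate Gram matrix on the frame \eqref{0F.5}; this is exactly the content the paper leaves implicit.
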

\begin{proof}
It suffices to notice that, near each $H_{\alpha}$ for $\alpha \in \{1, n_1, \ldots, n_{ir}\}$, the hyperbolic metric is precisely of the form \eqref{0F.12b} by \eqref{mc.1}, with similar observations for the other hypersurfaces.
\end{proof}

Finally, we remark that as a $(0-\cF)_c$-metric, the hyperbolic metric is very special, since it has constant sectional curvature, which implies that its curvature and all derivatives are bounded.  However it is not of bounded geometry since its injectivity radius is zero.  Thus when obtaining estimates in weighted H\"older spaces we will pass to a cover of positive injectivity radius and consequently of bounded geometry, see \S \ref{s:schauder}.       

\

In the next section we extend the notion of conformally compact asymptotically hyperbolic metric to a broader class modeled on these metrics.

\section{Function spaces} \label{s:func-spaces}

Let $(M,h)$ be a geometrically finite hyperbolic metric with cusps as described in the previous section.  Recall that an enumeration of the cusps is fixed.  Through the paper we write $\Sigma^2(M)$ for the bundle of symmetric $2$-tensors and $\Sigma^2_0(M)$ for the elements of $\Sigma^2(M)$ that are also trace-free with respect to the metric.  

\

For the smooth tensor bundle of $(p,q)$-tensors, $E := (TM)^{\otimes p} \otimes (T^*M)^{\otimes q} \longrightarrow M$, we have the usual space of $L^2$ sections, defined as the completion of compactly supported smooth sections  with respect to the $L^2$ norm
\[ \|u\|_{L^2} := \left( \int_M |u|_h^2 dv_h \right)^{1/2}.\]

\

To describe weighted versions of these spaces, encode weighting with respect to a vector \linebreak $\mu=(\mu_0,\mu_1,\ldots, \mu_{n_c}) \in \bR^{1+n_c}$ by 
\[ \sigma^{\mu} := \rho^{\mu_0} r_1^{\mu_1} \ldots r_{n_c}^{\mu_{n_c}}. \]
Note that we write inequalities like $\mu \geq 0$ as shorthand for $\mu_i \geq 0$ for all $i = 0, \ldots, n_c$.  Define
$$L^2_{\mu}(M;E) = \sigma^{\mu} L^2(M;E)
$$ 
with norm $\| u \|_{L^2_{\mu}} := \| \sigma^{-\mu} u \|_{L^2}.$

\

The following criterion will be useful to determine  when a section of $E$ is square integrable.

\begin{lemma} \label{l2-cut}
Suppose a continuous section $u: M \to E$ satisfies $|u|_h \leq C \sigma^{\mu}$ for some $C > 0$.  If 
\[ \mu_0 > \frac{n-1}{2}, \; \mu_j > -\frac{f_j}{2}, \; \forall j \in \{1, \ldots, n_c\} , \]
where $f_j$ is the rank of the $j$-th cusp, then $u \in L^2(M;E)$.
\end{lemma}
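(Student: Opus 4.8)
The plan is to estimate the $L^2$ norm of $u$ by reducing to the model pieces of the Mazzeo--Phillips compactification and checking convergence of the resulting integrals in each region. Since $|u|_h \leq C\sigma^\mu$, it suffices to show $\int_M \sigma^{2\mu}\, dv_h < \infty$. I would split $M$ into the compact core, a maximal rank cusp neighbourhood of each $H_j$ with $j \geq n_{ir}+1$, an intermediate rank cusp neighbourhood of each $H_\alpha$, and a neighbourhood of the part of $H_0$ away from the corners; since the enumeration of cusps is fixed and the defining functions $r_i$ are truncated to be $1$ off their own neighbourhoods, $\sigma^\mu$ reduces to $\rho^{\mu_0}$, to $r_j^{\mu_j}$, or to $(\rho r_\alpha)^{\ldots}$ in the respective regions, and the integrability in different regions is independent.

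First I would treat the $0$-face region, using the model \eqref{mc.3}: there $dv_h \sim \rho^{-n}\, d\rho\, dV_U$ and $\sigma^\mu = \rho^{\mu_0}$, so $\int \rho^{2\mu_0 - n}\, d\rho$ over $[0,\delta)$ converges iff $2\mu_0 - n > -1$, i.e.\ $\mu_0 > \frac{n-1}{2}$. Next, for a maximal rank cusp $H_j$ ($j > n_{ir}$) I would use \eqref{mc.2}: $h = \frac{dr^2}{r^2} + r^2 g_N$ on $[0,1/R]_r \times N$ gives $dv_h \sim r^{n-2}\, dr\, dV_N$ (here $\dim N = n-1 = f_j$), and $\sigma^\mu = r^{\mu_j}$, so $\int_0^{1/R} r^{2\mu_j + n - 2}\, dr$ converges iff $2\mu_j + n - 2 > -1$, i.e.\ $\mu_j > -\frac{n-1}{2} = -\frac{f_j}{2}$. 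For an intermediate rank cusp $H_\alpha$ with rank $f = f_\alpha$, I would use \eqref{mc.1}: with $u = r\rho$, the volume form is comparable to $\frac{r^{f-1}}{\rho^{n}}\, dr\, d\phi\, dw$ on $[0,\vep)_r \times \overline{\bS}^{n-1-f}_+ \times (\bR^f/\Gamma_\alpha)$ (one power of $r$ absorbed into $dr$, $f$ powers into $dw$, $n-f-1$ powers of $\rho$ surviving from $d\phi$ after one is used for $d\rho$, and $\rho^{-n}$ total); since $\sigma^\mu \sim \rho^{\mu_0} r^{\mu_\alpha}$ here, the integral $\int \rho^{2\mu_0 - n}\, r^{2\mu_\alpha + f - 1}\, dr\, d\phi$ separates, and as the $\phi$-integral is over a compact region it converges provided $2\mu_0 - n > -1$ and $2\mu_\alpha + f - 1 > -1$, i.e.\ $\mu_0 > \frac{n-1}{2}$ and $\mu_\alpha > -\frac{f}{2}$. (Near the corner $H_0 \cap H_\alpha$, \eqref{mc.1} is already the model and the same computation applies; the $\overline{\bS}^{n-1-f}_+$ factor is compact including its boundary, so no new condition arises.)

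The routine part is the volume-form bookkeeping; the main thing to get right is the power of $\rho$ and $r$ in $dv_h$ in the intermediate rank region, where the cross term $r^2/\rho^2$ on the $dw^2$ block and the conformal factor $\rho^{-2}$ on the sphere interact, and making sure the compactness of $N$, of $D_\alpha = \bR^f/\Gamma_\alpha$, and of the hemisphere $\overline{\bS}^{n-1-f}_+$ (with its boundary, which corresponds to the $H_0$-direction of the fibre) means no extra decay is needed transverse to $\rho$ and $r$. I would present the computation for the intermediate rank end in detail and remark that the others are analogous and simpler, noting that $|u|_h \leq C\sigma^\mu$ together with these strict inequalities gives a finite integral, hence $u \in L^2(M;E)$.
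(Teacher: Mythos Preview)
Your proposal is correct and follows essentially the same approach as the paper: compute the volume form in each model region of the Mazzeo--Phillips compactification and check that $\int \sigma^{2\mu}\,dv_h$ converges, with the intermediate-rank cusp being the main case and the others simpler. You are in fact more thorough than the paper (which only writes out the intermediate-rank computation explicitly); one small phrasing caution is that $\rho$ is itself a coordinate on the hemisphere $\overline{\bS}^{n-1-f}_+$, so saying ``the $\phi$-integral is over a compact region'' is slightly misleading---the compactness alone does not give convergence, and it is precisely the condition $2\mu_0 - n > -1$ that handles the singularity of $\rho^{2\mu_0-n}$ at the boundary $\rho=0$ of that hemisphere, as you correctly extract.
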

\begin{proof}
It suffices to check that  the $L^2$ norm is finite both in the neighbourhood $\sC_0$, where $\sigma = \rho$ and in any cusp neighbourhood $\overline{\sC}_{p_j}$ where $\sigma = \rho r_j$.  Here we outline the calculation for the cusp neighbourhood as the calculation in the regular neighbourhood is simpler.  In view of the expression for the metric given in equation \eqref{mc.1}, one may compute an expression for the volume form and then
\begin{align*}
\int_{\sC_{p_j}} |u|_h^2 dv_h &\leq C \int_0^* \int_0^* \rho^{2 \mu_0} r_j^{2 \mu_j} \frac{r_j^{f_j-1}}{\rho^n}  dr_j d\rho
\leq C \int_0^*  \rho^{2 \mu_0 - n} d\rho \int_0^* r_j^{2 \mu_j+f_j-1} dr_j.
\end{align*}
This integral is finite if $\mu_0 > (n-1)/2$ and $\mu_j > -f_j/2$.
\end{proof}

\

For any non-negative integer $k$, let $C^k(M,E)$ be the space of  $k$-times continuously differentiable sections $u$ of $E$ such that the norm
\[ \|u\|_{C^k(M,E)} := \sum_{\ell=0}^k \sup_{M} | (\nabla^h)^{\ell} u |_{h} \]
is finite. 

\

Before introducing the H\"older quotient, by a path we will mean any piecewise $C^1$ map $\gamma: [0,1] \to M$.  The length of a path is given by
\[ \mathrm{len(\gamma)} = \int_0^1 | \gamma'(s) |_h \; ds. \]
In what follows we let $P_{\gamma}: E|_{\gamma(0)} \longrightarrow E|_{\gamma(1)}$ denote  parallel transport of sections of $E$ along $\gamma$ with respect to the metric $h$.

\

For any non-negative integer $k$ and $\alpha \in (0,1]$ we define the $(k,\alpha)$-norm on $C^k$ sections of $E$ by
\[ \| u \|_{k,\alpha} = \| u \|_{C^{k,\alpha}(M;E)} := [u]_{k,\alpha} + \| u \|_{C^{k}(M;E)} , \]
where the H\"older seminorm is defined by
\[ [u]_{k,\alpha} := \sup \left\{ \frac{ \left|P_{\gamma} \left(\nabla^k u( \gamma(0) ) \right) - \nabla^k u( \gamma(1) ) \right|_h }{\mathrm{len}( \gamma )^{\alpha}}: \gamma \; \mbox{is a path with } \gamma(0) \neq \gamma(1) \right\}. \]
The space $C^{k,\alpha}(M;E)$ is then defined as the subspace of $C^k$ sections of $E$ for which the norm $\|\cdot\|_{k,\alpha}$ is finite.  

\

Note that 
\[ \nabla^h: C^{k,\alpha}(M;E)  \xrightarrow{\phantom{quebec}} C^{k-1,\alpha}(M;T^*M \otimes E) \]
is a bounded operator.  In view of the metric decomposition of equation \eqref{mc.1} in each cusp neighbourhood described in the previous section, requiring that a section lie in a H\"older space imposes some restriction on the asymptotic behaviour of that section at each boundary hypersurface.  For example, for a function $u: M \longrightarrow \bR$ restricted to the $j$-th cusp neighbourhood, boundedness of $\sup |\nabla^h u|_h$ is equivalent to separate boundedness of the $(0-\cF)_c$-derivatives
\begin{equation} \label{eqn:0cusp}
 r\rho \frac{\partial u}{\partial r}, \rho \frac{\partial u}{\partial \phi}, \frac{\rho}{r} \frac{\partial u}{\partial w},
\end{equation}
where we abbreviated $r = r_j$.  If $\frac{\rho}{r} \frac{\partial u}{\partial w}$ is bounded, then $\rho \partial_w u$ must vanish as $r \to 0$ and $\rho > 0$, and thus $\partial_w u$ extends to be $0$ along $r=0$ when $\rho > 0$.  Thus, if $u$ has a $C^1$ extension to $\Mdbar$, then the restriction of $u$ to a leaf of $\cF_j$ gives a constant function.

\

We additionally define a weighted H\"older space $C^{k,\alpha}_{\mu}(M;E) := \sigma^{\mu} C^{k,\alpha}(M;E)$ with norm
\[ \| u\|_{k,\alpha,\mu} := \| \sigma^{-\mu} u \|_{k,\alpha}. \] 

\

The following lemma gives some of the basic properties of these H\"older spaces.

\begin{lemma}
Let $k \in \bN_0$, $\alpha \in [0,1]$.  Let $E_1, E_2$ denote tensor bundles over $M$.
\begin{enumerate}
\item The hyperbolic metric $h$, its inverse $h^{-1}$ and any covariant derivative of curvature $\nabla^{\ell} \mathrm{R}^h$ lie in $\displaystyle \bigcap_{m \in \bN} C^{m,\alpha}_0$.
\item Contracting with $h$ or $h^{-1}$ preserves weight and regularity.
\item Pointwise tensor product induces a continuous map
\[ C^{k,\alpha}_{\mu}(M;E_1) \times C^{k,\alpha}_{\mu'}(M,E_2) \xrightarrow{\phantom{quebec}} C^{k,\alpha}_{\mu + \mu'}(M; E_1 \otimes E_2). \]
\item The covariant derivative induces a continuous map
\[ \nabla^h: C^{k,\alpha}_{\mu}(M;E_1)  \xrightarrow{\phantom{quebec}} C^{k-1,\alpha}_{\mu}(M;T^*M \otimes E_1). \]
\item For any non-negative weight $\mu$ and any $\ell\in\bN_0$, $\sup_M | (\nabla^h)^{\ell} \sigma^{\mu} |$ is bounded.  In particular, for two weights $\mu_1$ and $\mu_2$ with $\mu_1\ge \mu_2$, there is a continuous inclusion
$$
         C^{k,\alpha}_{\mu_1}(M;E_1) \subset C^{k,\alpha}_{\mu_2}(M;E_1).
$$
\end{enumerate}
\end{lemma}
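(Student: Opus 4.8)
The plan is to verify each of the five assertions by working in the local coordinate models for the $(0-\cF)_c$ metric $h$ provided by \eqref{mc.1}, \eqref{mc.2} and \eqref{mc.3}, together with the definitions of the weighted H\"older norms. The guiding principle throughout is that the hyperbolic metric, being a $(0-\cF)_c$ metric that is moreover conformal to a metric of bounded geometry (cf.\ the discussion after Lemma~\ref{fb.2}), has the property that $h$, $h^{-1}$, and all covariant derivatives of curvature are expressible, in a local frame for ${}^{0-\cF}TM$, by functions that are smooth and bounded with all derivatives up to the boundary of $\Mdbar$. So the bulk of the work is bookkeeping with these models.

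For part (a): in the cusp region with metric \eqref{mc.1}, the frame $e_0 = r\rho\,\partial_r$, $e_\phi^{(\ell)} = \rho\,\partial_{\phi^\ell}$, $e_w^{(m)} = \tfrac{\rho}{r}\,\partial_{w^m}$ is an orthonormal frame whose dual coframe gives $h = \sum (e^a)^2$ with coefficient functions that are constants; similarly for the other models. Since constant-curvature implies $\mathrm{R}^h$ is (an algebraic expression in $h$, hence) parallel, $\nabla^\ell \mathrm{R}^h = 0$ for $\ell \geq 1$ and $\mathrm{R}^h$ itself has constant components in the orthonormal frame. One then checks that the passage between this $(0-\cF)$-orthonormal frame and the coordinate frame is exactly the kind of change governed by \eqref{0F.5}, so that bounded $(0-\cF)$-derivatives translate into finite $C^{m,\alpha}_0$ norms; the weight $0$ is correct because none of $h, h^{-1}, \mathrm{R}^h$ decay or blow up relative to $h$. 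The H\"older seminorm $[\cdot]_{m,\alpha}$ is finite because the relevant tensors are parallel or have covariantly constant components, so $P_\gamma(\nabla^m u(\gamma(0))) = \nabla^m u(\gamma(1))$ and the seminorm is literally zero for $\mathrm{R}^h$ and for $h, h^{-1}$ after enough derivatives; for low derivatives of $h$ one uses $\nabla^h h = 0$ directly.

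Parts (b), (c), (d) are then formal consequences of the definitions. For (b), contraction with $h$ or $h^{-1}$ is, in an orthonormal frame, a contraction with a constant tensor, so it commutes with $\nabla^h$ and does not change the weight; combined with (a) this gives continuity. For (c), one expands $\nabla^k(u\otimes v) = \sum_{i=0}^k \binom{k}{i}\,\nabla^i u \otimes \nabla^{k-i}v$ by the Leibniz rule; the pointwise bound $|\nabla^i u|_h \leq \|u\|_{k,\alpha,\mu}\,\sigma^\mu$ and likewise for $v$, together with $|\sigma^\mu \sigma^{\mu'}| = |\sigma^{\mu+\mu'}|$, handles the $C^k$ part, and the H\"older part follows from adding and subtracting $P_\gamma(\nabla^i u(\gamma(0)))\otimes \nabla^{k-i}v(\gamma(1))$ in the usual way, using that parallel transport preserves norms. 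Part (d) is immediate from the definition $C^{k,\alpha}_\mu = \sigma^\mu C^{k,\alpha}$ once one observes $\nabla^h(\sigma^\mu w) = \sigma^\mu \nabla^h w + (\nabla^h \log \sigma^\mu)\otimes(\sigma^\mu w)$ and that $\nabla^h\log\sigma$ is bounded, which is a special case of part (e) with $\mu$ a unit vector after taking logs --- more directly, one checks $\sigma^{-\mu}\nabla^h(\sigma^\mu w)$ is a bounded tensor-valued operation on $w$.

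Part (e) is where the only real computation lives, and it is the main obstacle: one must show $\sup_M |(\nabla^h)^\ell \sigma^\mu|_h < \infty$ for $\mu \geq 0$. I would reduce to showing $(\nabla^h)^\ell(\sigma^\mu)/\sigma^\mu$ is bounded, i.e.\ that each $r_i$ and $\rho$ is what one might call a ``$(0-\cF)_c$-admissible'' weight. Writing $\sigma^\mu = \rho^{\mu_0}\prod r_i^{\mu_i}$ and using Leibniz, it suffices to treat a single factor, say $\rho^{\mu_0}$, near $H_0$ (and away from, and near, the corners), and $r_i^{\mu_i}$ near $H_i$. Near $H_0$ away from cusps one uses \eqref{mc.3}: $\rho$ is a special bdf with $|d\rho|_{\rho^2 h}=1$, so $\rho^{-\mu_0}\nabla^h(\rho^{\mu_0})$ involves $\rho^{-1}d\rho$, which has bounded $h$-norm, and higher derivatives are controlled because $h$ and its Christoffel symbols have the stated regularity from part (a). In the cusp model \eqref{mc.1} one computes directly that $\rho^{-1}d\rho$ and $r^{-1}dr$ have bounded $h$-norm (the first because $e_0 = r\rho\,\partial_r$ gives $dr/(r\rho)$ unit length, so $dr/r$ has length $\rho \leq 1$; the second because... similarly $\rho\,d\phi^0/\rho = d\phi^0$, and $\phi^0 = \rho$, lies in a bounded range), and then iterate using that the $(0-\cF)$ Christoffel symbols are bounded with all derivatives. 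The key structural point making this work for all $\ell$, which I would isolate as a sublemma, is that differentiation in the $(0-\cF)$ sense preserves the ring of functions with bounded $(0-\cF)$-derivatives, and $\log \rho$, $\log r_i$ have bounded $(0-\cF)$-derivatives even though $\rho, r_i$ vanish. The non-negativity $\mu \geq 0$ is used to ensure $\sigma^\mu$ stays bounded (so that we are bounding, not just the ratio, the quantity itself); and once $\sup|(\nabla^h)^\ell\sigma^{\mu_1-\mu_2}|$ is bounded for $\mu_1 - \mu_2 \geq 0$, the inclusion $C^{k,\alpha}_{\mu_1}\subset C^{k,\alpha}_{\mu_2}$ follows by writing $u = \sigma^{\mu_1-\mu_2}\cdot(\sigma^{-\mu_1+\mu_2}u)$ and applying parts (c) and (d) together with the just-proved boundedness of $\sigma^{\mu_1-\mu_2}$ in every $C^{m,\alpha}_0$.
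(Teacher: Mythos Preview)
Your proof is correct and covers the same ground as the paper's, though with considerably more detail. The paper dispatches (a)--(d) in one sentence as ``straightforward'', and your expansions of these items are all sound; note in particular that for (a) the constant-curvature hypothesis makes $\mathrm{R}^h$ parallel, so the content is trivial, and that your proof of (d) already invokes the boundedness of $\nabla^h\log\sigma$, which is really part of (e) --- this mild reordering is harmless.

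Where the two presentations genuinely differ is in (e). You set up a computation in the orthonormal $(0-\cF)_c$-frame and appeal to an inductive structure (bounded structure constants, bounded frame-derivatives of $\log\rho$ and $\log r_i$, iterate). The paper instead isolates the single geometric reason the computation closes: in the $j$-th cusp neighbourhood, $\sigma=\rho r_j$ is constant along the leaves of $\cF_j$, so the only potentially singular frame vector field $\tfrac{\rho}{r}\partial_w$ annihilates $\sigma$. This is exactly the fact your first-derivative check ``$r^{-1}dr$ and $\rho^{-1}d\rho$ have bounded $h$-norm'' is using, and it is what makes the induction go through at every order. The paper also stresses that this fails for a generic total boundary defining function --- a point worth making explicit, since it explains why the lemma depends on the particular $\sigma$ fixed in \eqref{eqn:tbdf} rather than being a general feature of the H\"older spaces. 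Your approach buys a more self-contained verification; the paper's buys clarity about why the specific choice of $\sigma$ is essential.
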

\begin{proof}
Items (a)--(d) are straightforward.  

\

Item (e) stems from our particular choice of total boundary defining function given in the previous section.  Recall that $\sigma$ and the choice of coordinates in each cusp neighbourhood is adapted to the geometry.  In particular in the $j$-th cusp neighbourhood, $\sigma = \rho r_j$ is constant along the leaves of $\cF_j$, thus the action of the singular vector field $\frac{\rho}{r} \frac{\partial}{\partial w}$ on $\sigma$ always vanishes, which need not be true for an arbitrary total defining function.  Thus for any $\ell\in\bN_0$ and for any non-negative weight, $\sup_M | (\nabla^h)^{\ell} \sigma^{\mu} |$ is bounded.  For weights $\mu_1$ and $\mu_2$ such that $\mu_1\ge \mu_2$, this implies that there is a continuous inclusion 
$$
         C^{k,\alpha}_{\mu_1}(M;E_1) \subset C^{k,\alpha}_{\mu_2}(M;E_1).
$$
\end{proof}

With these preliminaries in hand, we now extend the notion of conformally compact asymptotically hyperbolic metric.  For motivation we briefly review two well-known types of metrics.  

\

First, if $(\Mbar,\gbar)$ is a smooth compact manifold with boundary, and $\rho$ is a smooth boundary defining function, then the conformally compact metric $g = \rho^{-2} \gbar$ is always asymptotically negatively curved near $\partial M$. If the conformally invariant condition that $|d\rho|^2_{\rho^2 g} = 1$ on $\partial M$ is satisfied, then all sectional curvatures of $g$ tend to $-1$ plus corrections of order $\rho$.  Moreover $(M,g)$ is complete and of bounded geometry.  The facts above are due to Mazzeo \cite{Mazzeo88}.

\

Now given a hyperbolic metric $h$  as in equation \eqref{mc.3} with accompanying open subset $U \subset \Omega_{\Gamma} / \Gamma$ away from the cusps, the discussion above shows that we may take perturbations by symmetric $2$-tensors ``of the same order'' as the metric with respect to the defining function, subject only to $|d\rho|^2_{\rho^2 g} = 1$ at $\rho = 0$.  In particular, any ``tangential'' perturbation of the form
\[  g = \frac{d\rho^2+ h_{U}(\rho)}{\rho^2}+ \frac{e_{U}(\rho)}{\rho^2} \]
will be asymptotically hyperbolic, where $e_U(\rho)$ is a smooth family of small 2-tensors on $U$ parametrized by $\rho$.  Note that $|e_U|_g = O(\rho^2)$.

\

Second, consider a hyperbolic cusp metric in a compactified neighbourhood, as in equation \eqref{mc.1}.  This metric has bounded curvature only when the induced metric on the leaves of $\cF_i$ is flat.  Thus in these neighbourhoods, we must always take perturbations which vanish to some positive order.

\

Now suppose that $(M,h) := (\bH^n / \Gamma,h)$ is a geometrically finite hyperbolic metric with $n_{ir}$ intermediate rank cusps and $n_{mr}$ maximal rank cusps, and consider the compactification $\Mdbar$ described in the previous section.   We say that a Riemannian metric $g$ on $\Mdbar\setminus \pa \Mdbar$ is asymptotically hyperbolic with cusps modeled on $(M,h)$ if there is a weight vector $\mu \in \bR^{1+n_c}$ with $\mu_0 \geq 0$ and $\mu_i > 0$ for $i = 1, \ldots, n_c$ so that $g$ is a $(0-\cF)_c$-metric of the form
\[ g = h + e, \]
where $e$ symmetric $(0-\cF)_c$ $2$-tensor with $e \in C^{k,\alpha}_{\mu}(M;\Sigma^2(M))$, and $|d\rho|^2_{\rho^2 g} = 1$ on $H_0$.  When necessary we will write that $g$ is $(k,\alpha,\mu)$-asymptotically hyperbolic to emphasize the parameters involved, always keeping in mind the fixed geometrically finite hyperbolic reference metric.

\

As the leading part of $g$ is $h$, it will be useful to perform analysis of $g$ in the function spaces defined with respect to $h$.  Note that $g$ is quasi-isometric to $h$.  We additionally have

\begin{prop} Let $g$ be a $(k,\alpha,\mu)$-asymptotically hyperbolic metric with cusps as above.  Then for $\ell\leq k$,
\[ \nabla^g: C^{\ell,\alpha}_{\mu}(M;E) \xrightarrow{\phantom{quebec}} C^{\ell-1,\alpha}_{\mu}(M;T^*M \otimes E) \]
is bounded.
\end{prop}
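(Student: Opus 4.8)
The plan is to write everything in terms of the difference tensor between the two Levi--Civita connections. Set $A := \nabla^g - \nabla^h$, which is a genuine $(1,2)$-tensor field. Polarising the Koszul formula for $\nabla^g$ and using $\nabla^h h = 0$ (item (a) of the preceding lemma), one has, for vector fields $X,Y,Z$,
\[ 2\, g\bigl(A(X,Y),Z\bigr) = (\nabla^h_X g)(Y,Z) + (\nabla^h_Y g)(X,Z) - (\nabla^h_Z g)(X,Y) = (\nabla^h_X e)(Y,Z) + (\nabla^h_Y e)(X,Z) - (\nabla^h_Z e)(X,Y), \]
since $\nabla^h g = \nabla^h e$. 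Thus schematically $A = \tfrac12\, g^{-1}\ast\nabla^h e$, a fixed bilinear algebraic expression in $g^{-1}$ and $\nabla^h e$; and for an arbitrary tensor field $u$ of the type of $E$ the two connections are related by $\nabla^g u = \nabla^h u + A\ast u$, where $A\ast u$ is the finite sum (one contraction per tensor slot of $E$, with the appropriate signs) of pairings of $A$ into $u$, again a fixed bilinear algebraic operation. So it suffices to control $A$ and then invoke items (c), (d), (e).

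First I would check that $g^{-1}\in C^{k,\alpha}_0$. Indeed $g=h+e$ with $h\in\bigcap_m C^{m,\alpha}_0$ by item (a) and $e\in C^{k,\alpha}_\mu\subset C^{k,\alpha}_0$ by item (e) (recall $\mu\ge 0$), so $g\in C^{k,\alpha}_0$; moreover $g$ is quasi-isometric to $h$, hence uniformly positive definite relative to $h$. Since item (c) makes pointwise tensor product a bounded bilinear map $C^{k,\alpha}_0\times C^{k,\alpha}_0\to C^{k,\alpha}_0$, the fibrewise inversion of uniformly positive-definite symmetric forms preserves $C^{k,\alpha}_0$ --- concretely by writing $g^{-1}=(\mathrm{Id}+h^{-1}e)^{-1}h^{-1}$ and expanding in a Neumann series where $h^{-1}e$ is small in $C^{k,\alpha}_0$ (automatic near every cusp, where $\mu_i>0$ forces decay, and reduced to this case near $H_0$ by localising, or more robustly by the standard composition lemma for smooth functions of matrices with uniformly bounded derivatives). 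Combining $g^{-1}\in C^{k,\alpha}_0$ with $\nabla^h e\in C^{k-1,\alpha}_\mu$ (item (d) applied to $e\in C^{k,\alpha}_\mu$) and item (c), I get $A\in C^{k-1,\alpha}_{0+\mu}=C^{k-1,\alpha}_\mu$.

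Finally, fix $1\le\ell\le k$ and $u\in C^{\ell,\alpha}_\mu(M;E)$. By item (d), $\nabla^h u\in C^{\ell-1,\alpha}_\mu$ with norm controlled by $\|u\|_{\ell,\alpha,\mu}$. For the correction term, $A\in C^{k-1,\alpha}_\mu\subset C^{\ell-1,\alpha}_\mu$ (fewer derivatives, same weight) and $u\in C^{\ell,\alpha}_\mu\subset C^{\ell-1,\alpha}_\mu$, so item (c) gives $A\ast u\in C^{\ell-1,\alpha}_{2\mu}$, and item (e), using $2\mu\ge\mu$, embeds this continuously into $C^{\ell-1,\alpha}_\mu$. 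Hence $\nabla^g u=\nabla^h u+A\ast u\in C^{\ell-1,\alpha}_\mu(M;T^*M\otimes E)$ with $\|\nabla^g u\|_{\ell-1,\alpha,\mu}\le C\|u\|_{\ell,\alpha,\mu}$, which is the assertion. The only step demanding genuine care is the boundedness of $g\mapsto g^{-1}$ on $C^{k,\alpha}_0$; the weight bookkeeping itself is effortless, since every product appearing only improves (or keeps) the weight.
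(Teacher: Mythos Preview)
Your proof is correct and follows essentially the same route as the paper: express $\nabla^g = \nabla^h + A\ast$ with $A = \tfrac12 g^{-1}\ast\nabla^h e$, observe $A\in C^{k-1,\alpha}_\mu$, and conclude via the product and embedding lemmas. You are in fact more careful than the paper in two places: you justify $g^{-1}\in C^{k,\alpha}_0$ explicitly (the paper uses this silently), and you track the weight $A\ast u\in C^{\ell-1,\alpha}_{2\mu}\hookrightarrow C^{\ell-1,\alpha}_\mu$ rather than simply asserting membership in $C^{\ell-1,\alpha}_\mu$.
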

\begin{proof}
First, in any system of local coordinates, write the Christoffel symbols of $h$ and $g$ as $\hat{\Gamma}_{ij}^p$ and $\Gamma_{ij}^p$ respectively.  Recall that 
\[ A_{ij}^p = \hat{\Gamma}_{ij}^p - \Gamma_{ij}^p \]
define the components of a well-defined $(1,2)$-tensor field.  Evaluating this tensor in $h$-normal coordinates at a point allows us to see that
\[ A_{ij}^p = -\frac{1}{2} g^{pm} ( \nabla^h_i e_{jm} + \nabla^h_j e_{im} - \nabla^h_m e_{ij}). \]
In particular if $g$ is $(k,\alpha,\mu)$-asymptotically hyperbolic then $\nabla^h e$ is  $C^{k-1,\alpha}_{\mu}$, and thus $A$ is  $C^{k-1,\alpha}_{\mu}$.

\

If $u$ is a $(p,q)$-tensor, then the formula for the covariant derivative with respect to $g$ may be expressed in terms of the covariant derivative of $h$ and various contractions with $A$.  We write this abstractly as
\[ \nabla^g u = \nabla^h u + A * u, \]
where $A*u$ denotes linear combinations of contractions of $A$ with $u$. Thus if $u \in C^{\ell,\alpha}_{\mu}(M;E)$, each term on the right hand side of this equation lies in $C^{\ell-1,\alpha}_{\mu}(M;E)$, since $\min\{\ell-1,k-1\} = \ell-1$ when $\ell \leq k$.
\end{proof}

As mentioned above, a generic cusp metric has unbounded curvature when the link is not flat.  As we now prove, since our asymptotically hyperbolic metrics are hyperbolic at leading order and we always take the cusp weight such that $\mu_i > 0$, $i>0$, we obtain metrics with bounded curvature.

\begin{prop} \label{prop:Rm-bdd} Let $k \geq 2$ and define $\mathcal{U} := \{ e \in  C^{k,\alpha}_{\mu}(M;\Sigma^2(M)): h+e \; \mbox{is positive definite}\}$.  Then for $e \in \mathcal{U}$, 
 $g = h + e$ has bounded curvature and the nonlinear operator
\begin{align*}
\mathcal{U} & \xrightarrow{\phantom{quebec}} C^{k-2,\alpha}_{\mu}(M; \Sigma^2(M)) \\
 e & \xmapsto{\phantom{quebec} }\Rc(h+e) + (n-1)(h+e)
 \end{align*}
is a smooth map of Banach manifolds.
\end{prop}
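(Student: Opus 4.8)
The plan is to express the map as a composition of bounded multilinear maps between the weighted H\"older spaces of \S\ref{s:func-spaces}, the only genuinely nonlinear ingredient being $e\mapsto(h+e)^{-1}$, which will be handled by a Neumann series. First note that $\mathcal{U}$ is open in the Banach space $C^{k,\alpha}_{\mu}(M;\Sigma^2(M))$: since $\mu\ge 0$ the function $\sigma^{\mu}$ is bounded, so the $(k,\alpha,\mu)$-norm dominates $\sup_M|e|_h$ and positive definiteness is an open condition; moreover, $e$ lies in $C^{k,\alpha}_\mu\subset C^{k,\alpha}_0$, hence extends continuously to the compact manifold with corners $\Mdbar$ (as a section of the symmetric square of the $(0-\cF)$-cotangent bundle), so $g=h+e$ is in fact uniformly quasi-isometric to $h$. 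Because $h$ is hyperbolic, $\Rc(h)+(n-1)h=0$, so the operator equals $e\mapsto\bigl(\Rc(h+e)-\Rc(h)\bigr)+(n-1)e$; the last term is a bounded linear map $C^{k,\alpha}_\mu\to C^{k-2,\alpha}_\mu$, hence smooth, so it remains to treat $\mathcal{E}(e):=\Rc(h+e)-\Rc(h)$.

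Next I would bring in the difference tensor $A$ of the preceding proposition, a genuine $(1,2)$-tensor with $A=\Psi(g^{-1},\nabla^h e)$ for a fixed bilinear bundle map $\Psi$; granting the smoothness of $e\mapsto g^{-1}$ proved below, $A$ lies in $C^{k-1,\alpha}_\mu(M;TM\otimes\Sigma^2(M))$. The curvature comparison formula reads, schematically and with index contractions only (no metric), $\Rm(g)=\Rm(h)+\nabla^h A+A*A$, and contracting gives that $\mathcal{E}(e)$ is a fixed metric-independent linear contraction of $\nabla^h A$ plus a fixed quadratic contraction $A*A$. Since $\nabla^h\colon C^{k-1,\alpha}_\mu\to C^{k-2,\alpha}_\mu$ is bounded (Lemma~(d)), contraction preserves this, and the pointwise product maps $C^{k-2,\alpha}_\mu\times C^{k-2,\alpha}_\mu\to C^{k-2,\alpha}_{2\mu}\subset C^{k-2,\alpha}_\mu$ continuously (Lemma~(c),(e)), we get both conclusions at once once $e\mapsto g^{-1}$ is understood: bounded curvature, because $\Rm(g)\in\Rm(h)+C^{k-2,\alpha}_\mu\subset C^{k-2,\alpha}_0$ (Lemma~(a)), which is a space of $h$-bounded tensors when $k\ge 2$, and $|\Rm(g)|_g\le C|\Rm(g)|_h$ by quasi-isometry; and smoothness of $\mathcal{E}$, reduced to smoothness of $e\mapsto A$, since the contraction of $\nabla^h A$ is bounded linear in $A$, the term $A*A$ is bounded bilinear in $A$, and $e\mapsto A=\Psi(g^{-1},\nabla^h e)$ is smooth as soon as $e\mapsto g^{-1}$ is, because $e\mapsto\nabla^h e$ is bounded linear and $\Psi$ is bounded bilinear.

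The crux is therefore that $e\mapsto g^{-1}=(h+e)^{-1}$ is a smooth --- indeed real-analytic --- map $\mathcal{U}\to C^{k,\alpha}_0(M;TM\otimes TM)$. For fixed $e_0\in\mathcal{U}$ I would first show $g_0^{-1}\in C^{k,\alpha}_0$ by a bootstrap starting from $g_0^{-1}\in C^0_0$ (uniform quasi-isometry): differentiating $g_0\,g_0^{-1}=\mathrm{id}$ and using $\nabla^h h=0$ gives $\nabla^h g_0^{-1}=-\,g_0^{-1}*\nabla^h e_0*g_0^{-1}$, and since $C^{j,\alpha}_0$ is a Banach algebra under tensor product and contraction (Lemma~(b),(c)) and $\nabla^h e_0\in C^{k-1,\alpha}_\mu$, induction on $j\le k$ yields $g_0^{-1}\in C^{k,\alpha}_0$. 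Then, writing $B:=h^{-1}e$, $B_0:=h^{-1}e_0$ and $C_0:=(\mathrm{Id}+B_0)^{-1}\in C^{k,\alpha}_0(M;\mathrm{End}\,TM)$, the factorization $\mathrm{Id}+B=(\mathrm{Id}+B_0)\bigl(\mathrm{Id}+C_0(B-B_0)\bigr)$ gives, for $\|e-e_0\|_{k,\alpha,\mu}$ small enough that the $C^{k,\alpha}_0$-norm of $C_0(B-B_0)$ is below the reciprocal of the Banach-algebra constant,
\[
 g^{-1}=\Bigl(\sum_{j\ge 0}(-1)^j\bigl(C_0(B-B_0)\bigr)^{j}\Bigr)C_0\,h^{-1},
\]
a series converging in $C^{k,\alpha}_0$ whose degree-$j$ term is a bounded $j$-linear map of $e-e_0$; hence $e\mapsto g^{-1}$ is real-analytic, in particular $C^\infty$, near every $e_0\in\mathcal{U}$. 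Chaining the smooth and bounded multilinear maps above then shows that $e\mapsto\Rc(h+e)+(n-1)(h+e)$ is a smooth map of Banach manifolds $\mathcal{U}\to C^{k-2,\alpha}_\mu(M;\Sigma^2(M))$.

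The step requiring the most care --- and the main obstacle --- is precisely this inversion step: one must know that the weighted H\"older spaces built from the Mazzeo--Phillips compactification behave as Banach algebras under the natural tensor operations and are stable under fibrewise smooth functions, which is exactly where the global, Melrose/ALN-style definition of the spaces is essential; the uniform quasi-isometry $g\sim h$ is what keeps all the pointwise bounds in the bootstrap and in the Neumann series uniform over the noncompact manifold $M$.
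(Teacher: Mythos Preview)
Your proof is correct and follows essentially the same route as the paper: both use the difference tensor $A$, the curvature comparison $\Rm(g)=\Rm(h)+\nabla^h*A+A*A$, and the algebra properties of the weighted H\"older spaces to land in $C^{k-2,\alpha}_\mu$. The only substantive difference is that you supply the Neumann-series argument for the smoothness of $e\mapsto (h+e)^{-1}$ explicitly, whereas the paper dispatches the smoothness of $\Rc$ in a single sentence (``a polynomial contraction of the inverse metric and its first two derivatives''); your more careful treatment of this step is entirely appropriate.
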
  
\begin{proof}
Consider the Riemann curvature $(1,3)$-tensor of $g$, $\riem^g$, which we may write  abstractly
\[ \riem^g = \partial \Gamma + \Gamma * \Gamma, \]
where $\Gamma$ represents the Christoffel symbols with respect to $g$.  Interpolating the Christoffel symbols $\hat{\Gamma}$ of the reference hyperbolic metric $h$ and using $A$ as the difference tensor as in the proof of the previous proposition, one obtains abstractly
\begin{align} \label{eq:riem-curv-g}
 \riem^g &= \riem^h+ \partial A + A*\hat{\Gamma} + A*A \nonumber \\
 &= \riem^h+\nabla^h* A + A*A,
\end{align} 
where in the second equality we express the tensor in $h$-normal coordinates at a point, in order to recognize a $h$-covariant derivative, and the asterisk ($*$) indicates linear combinations of the quantities indicated, whose explicit form is unimportant for what follows.

\

As in the previous proof, $A \in C^{k-1,\alpha}_{\mu}$, and thus $\nabla^h * A \in 
C^{k-2,\alpha}_{\mu}$, and $A*A \in C^{k-1,\alpha}_{2\mu}$.  This shows that $g$ has bounded curvature.

\

Contracting equation \eqref{eq:riem-curv-g} and adding a factor of $(n-1) g$ to both sides we obtain
\begin{align*} 
 \Rc(g) + (n-1)g &= \Rc(h) + (n-1) (h+e) + \nabla^h * A + A*A \\
 &= (n-1) e+\nabla^h * A + A*A,
\end{align*}
which lies in $C^{k-2,\alpha}_{\mu}$ once more.  The smoothness of $\Rc$ follows from the fact that the Ricci tensor is a polynomial contraction of the inverse metric and its first two derivatives.
\end{proof}

Using this proposition and similar estimations, one may check a similar mapping property for the gauge-adjusted Einstein equation defined in equation \eqref{definition-Q}.
\begin{cor} \label{cor:Qmapping} 
The nonlinear operator
\begin{align*}
\mathcal{U} \times \mathcal{U}  & \xrightarrow{\phantom{quebec}} C^{k-2,\alpha}_{\mu}(M; \Sigma^2(M)) \\
 (e_1,e_2) & \xmapsto{\phantom{quebec} }Q(h+e_1,h+e_2)
 \end{align*}
is a smooth map of Banach manifolds.
\end{cor}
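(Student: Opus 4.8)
The plan is to write $Q(h+e_1, h+e_2)$ as the sum of the ``Einstein part'' $\Rc(h+e_1) + (n-1)(h+e_1)$ and the negative of the gauge term
\[ \mathcal{G}(g,t) := \delta_g^*\bigl( g\, t^{-1}\, \delta_g(G_g t)\bigr),\qquad g := h+e_1,\quad t := h+e_2, \]
and to treat the two pieces separately. The Einstein part is already done: by Proposition~\ref{prop:Rm-bdd} the assignment $e_1 \mapsto \Rc(h+e_1) + (n-1)(h+e_1)$ is a smooth map $\mathcal{U} \to C^{k-2,\alpha}_\mu(M;\Sigma^2(M))$. So everything reduces to showing that $(e_1,e_2) \mapsto \mathcal{G}(h+e_1,h+e_2)$ is a smooth map of Banach manifolds $\mathcal{U}\times\mathcal{U} \to C^{k-2,\alpha}_\mu(M;\Sigma^2(M))$. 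I would first record that at the base point $G_h h = (1-\tfrac n2)h$ and $\delta_h h = 0$, so $\mathcal{G}(h,h) = 0$; this vanishing is what makes the target weight $\mu$ (rather than weight $0$) attainable.

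Next I would decompose $\mathcal{G}$ into elementary operations on the weighted H\"older scales, tracking derivative order and weight at each step. Let $A$ denote the difference tensor of $\nabla^g$ and $\nabla^h$, which as in the proof of Proposition~\ref{prop:Rm-bdd} lies in $C^{k-1,\alpha}_\mu$ and depends smoothly on $e_1$. Then $\delta_g$, $\delta_g^*$ and $G_g$ are all expressible through $\nabla^h$, the difference tensor $A$, and pointwise contractions with $h^{\pm1}$, $g^{-1}$, $t^{-1}$, where $g^{-1}$ and $t^{-1}$ are smooth $C^{k,\alpha}_0$-valued functions of $e_1$ and $e_2$ (expand them as Neumann series, which converge since $h+e_i$ is positive definite on $\mathcal{U}$) whose differences with $h^{-1}$ lie in $C^{k,\alpha}_\mu$. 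Following the chain: $G_g t$ is an order-zero algebraic expression in $t,g,g^{-1}$, hence lies in $C^{k,\alpha}_0$ and differs from $(1-\tfrac n2)h$ by a $C^{k,\alpha}_\mu$ remainder; the order-one operation $\delta_g(\cdot)$, together with $\delta_h h = 0$ and the $A$-expansion of $\delta_g$, sends this into $C^{k-1,\alpha}_\mu$; multiplication by $g\,t^{-1}$ is order zero and preserves the weight by the module property, keeping us in $C^{k-1,\alpha}_\mu$; and the final order-one operation $\delta_g^*(\cdot)$ lands in $C^{k-2,\alpha}_\mu$. The net effect is the expected loss of exactly two derivatives and no loss of weight, the latter using that $A$ carries the extra order of decay $C^{k-1,\alpha}_\mu$ and that the spaces $C^{\ell,\alpha}_\mu$ form a module over $C^{k,\alpha}_0$ under pointwise contraction.

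For smoothness I would observe that $\mathcal{G}$ is a finite composition of maps each of which is smooth between Banach manifolds: pointwise tensor product and $h$-contraction are bounded multilinear maps; $\nabla^h$, $\delta_h$ and $\delta_h^*$ are bounded linear maps on the weighted H\"older scales; the assignment $e_1 \mapsto A(e_1)$ is analytic, being polynomial in $\nabla^h e_1$ and in $g^{-1} = (h+e_1)^{-1}$; and metric inversion $e \mapsto (h+e)^{-1}$ is analytic on $\mathcal{U}$ via the Neumann series in the Banach algebra $C^{k,\alpha}_0$. Composing these, $(e_1,e_2) \mapsto \mathcal{G}(h+e_1,h+e_2)$ is smooth, and adding back the Einstein part handled by Proposition~\ref{prop:Rm-bdd} gives the claimed smoothness of $(e_1,e_2) \mapsto Q(h+e_1,h+e_2)$.

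The step I expect to be the main obstacle is the second one: verifying that the two substitutions $\nabla^g = \nabla^h + A * (\cdot)$ occurring in $\delta_g$ and in $\delta_g^*$ cost exactly two derivatives uniformly over all of the resulting terms---so the image lands in $C^{k-2,\alpha}$ and not $C^{k-3,\alpha}$---and that no weight is lost anywhere along the chain. This is purely a matter of careful bookkeeping with the module and inclusion properties of the spaces $C^{\ell,\alpha}_\mu$ established above and with the fact that $A \in C^{k-1,\alpha}_\mu$; once it is in place, everything else follows routinely from Proposition~\ref{prop:Rm-bdd} and the algebraic structure of the weighted H\"older spaces.
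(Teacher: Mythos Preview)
Your proposal is correct and is precisely the ``similar estimations'' the paper alludes to but does not write out: the paper simply states the corollary with the single sentence ``Using this proposition and similar estimations, one may check a similar mapping property for the gauge-adjusted Einstein equation,'' and provides no further detail. Your decomposition of $Q$ into the Einstein part (handled by Proposition~\ref{prop:Rm-bdd}) plus the gauge term, followed by a step-by-step tracking of derivative loss and weight through $G_g$, $\delta_g$, $g\,t^{-1}$, and $\delta_g^*$ using $\nabla^g = \nabla^h + A*(\cdot)$ with $A \in C^{k-1,\alpha}_\mu$, is exactly the intended argument.
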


\section{Boundary Schauder estimates for the exhaustion}
\label{s:schauder}

The proof of the isomorphism theorem for operators of the form $\Delta^h + K$ (with $K$ a constant) requires boundary Schauder estimates applied over an exhaustion of $M$ by the superlevel sets
\[ M_{\vep} := \{ p \in M: \sigma(p) \geq \vep \} \]
defined by the total boundary defining function $\sigma$.  This section is devoted to the proof of the following uniform Schauder estimate:
\begin{prop} \label{prop:boundary-schauder}
Let $\omega \in C^{k,\alpha}(M_{\vep};E)$ be a solution to $(\Delta + K) \omega = \tau$ in $M_{\vep}$, $\omega|_{\partial M_{\vep}} = 0$.  Then for $\vep$ sufficiently small,
\[ \| \omega \|_{k,\alpha; M_{\vep}} \leq C ( \|\tau\|_{k-2,\alpha;M_{\vep}} + \|\omega\|_{0,0;M_{\vep}} ),\]
where $C = C(n,\alpha,k,K)$ is independent of $\vep$.
\end{prop}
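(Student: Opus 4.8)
The plan is to argue by contradiction, mimicking the ``blowup/rescaling'' strategy already announced in the introduction, but adapted so that the geometry degenerates in the cusps. Suppose the estimate fails: there is a sequence $\vep_m \to 0$, solutions $\omega_m \in C^{k,\alpha}(M_{\vep_m};E)$ with $(\Delta + K)\omega_m = \tau_m$ on $M_{\vep_m}$, $\omega_m|_{\partial M_{\vep_m}} = 0$, and
\[
\|\omega_m\|_{k,\alpha;M_{\vep_m}} = 1, \qquad \|\tau_m\|_{k-2,\alpha;M_{\vep_m}} + \|\omega_m\|_{0,0;M_{\vep_m}} \to 0.
\]
Choose points $p_m \in M_{\vep_m}$ at which $\|\omega_m\|_{k,\alpha}$ is essentially attained (up to a factor of $2$, say), either by the $C^k$ part of the norm or by the H\"older seminorm realized along a short path based near $p_m$. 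The first step is a dichotomy on the location of $p_m$: either $p_m$ stays in a fixed compact subset of $\Mdbar \setminus \pa\Mdbar$ (after passing to a subsequence), or $p_m$ escapes to infinity, and in that case it escapes into the region $\sC_0$ near $H_0$ away from the cusps, into a cusp neighbourhood $\overline{\sC}_{p_j}$, or into a corner $H_0 \cap H_\alpha$.

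The second step is to produce, in each case, a pointed limiting manifold $(M_\infty, g_\infty, p_\infty)$ together with a limiting solution $\omega_\infty$ of $(\Delta^{g_\infty} + K)\omega_\infty = 0$. When $p_m$ stays compact this is just elliptic regularity and Arzel\`a--Ascoli on a fixed ball. When $p_m \to H_0$ the rescaled metrics converge (in $C^\infty_{\mathrm{loc}}$ after pulling back by suitable local $0$-charts) to the hyperbolic metric on a half-space model, exactly as in Graham--Lee \cite{GrahamLee}: this is where the barrier/maximum-principle argument of \cite{GrahamLee} supplies a priori control and identifies the admissible constant $K$. The delicate case is the cusp, where $h$ is hyperbolic but \emph{not} of bounded geometry: here, following Tian--Yau \cite{Tian-Yau} as flagged in the introduction, I would pass to quasi-coordinates — i.e. pull back to the universal cover $C_p(R)$ of the cusp neighbourhood, where the metric \eqref{mc.1} becomes a genuine product-type hyperbolic metric of bounded geometry with positive injectivity radius — so that uniform interior Schauder estimates are available there, and the rescaled limit is a complete hyperbolic manifold (a piece of $\bH^n$ or a product). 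In every case the hypotheses $\|\tau_m\|_{k-2,\alpha} \to 0$ and $\|\omega_m\|_{0,0} \to 0$ force $\omega_\infty$ to solve the homogeneous equation with $\omega_\infty \equiv 0$ pointwise, hence $\omega_\infty \equiv 0$; but the normalization of $p_m$ as the point where the $(k,\alpha)$-norm of $\omega_m$ is (almost) attained passes to the limit to give $\|\omega_\infty\|_{k,\alpha}$ bounded below by a positive constant at $p_\infty$ (one needs convergence in $C^{k,\alpha}_{\mathrm{loc}}$, obtained by elliptic regularity on the limiting domain plus interpolation), a contradiction.

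The uniformity of the constant $C$ in $\vep$ is exactly what the contradiction delivers: no step used the ``radius'' $\vep_m$ except that the base points $p_m$ could drift toward $\pa\Mdbar$, and each possible drift was handled by a model. Two technical points require care and should be spelled out. First, the interaction of the Dirichlet boundary condition $\omega_m|_{\partial M_{\vep_m}} = 0$ with the rescaling: when $p_m$ lies within bounded rescaled distance of $\partial M_{\vep_m}$ the limiting domain is a half-space and one needs the \emph{boundary} Schauder estimate (with vanishing Dirichlet data) in the model, which is classical; when $p_m$ is far from $\partial M_{\vep_m}$ in the rescaled metric the boundary disappears in the limit and only interior estimates are needed. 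Second, in the corner region $H_0 \cap H_\alpha$ one must combine the $0$-rescaling in the $x_0$ (equivalently $\rho$) direction with the quasi-coordinate unwrapping in the cusp directions simultaneously; the local form \eqref{0F.12b} of the metric shows these two degenerations commute, so the limiting model is again hyperbolic of bounded geometry and the argument goes through.

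The main obstacle I anticipate is precisely the cusp/corner analysis: verifying that the quasi-coordinate charts of Tian--Yau type furnish genuinely \emph{uniform} $C^{k,\alpha}$ bounds for $\Delta^h$ (including the correct behaviour of the coefficients and their H\"older norms under the unwrapping), and that the limiting extraction respects both the weighted H\"older structure and the Dirichlet condition along $\partial M_{\vep_m}$ when that boundary is close in rescaled distance. The case away from the cusps is essentially \cite{GrahamLee}; the novelty and the work are in showing that the loss of bounded geometry for $h$ in the cusps is only apparent, curable by passing to the cover.
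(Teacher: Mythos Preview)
The paper's proof is direct rather than by contradiction: following Cheng--Yau \cite{CY} and Graham--Lee \cite{GrahamLee}, it constructs around every point $p_0\in M_\vep$ (interior or boundary) an explicit chart $\Psi_{p_0}:B_+\to M_\vep$ from a fixed Euclidean half-ball in which the pulled-back hyperbolic metric has coefficients, derivatives, and inverse uniformly controlled independently of $\vep$ and of $p_0$. Near a cusp this is done exactly by the quasi-coordinate device you mention (periodic extension to the cover, then an explicit rescaling depending on $p_0$); near $H_0$ it is the standard $0$-rescaling. One then applies the classical local boundary Schauder estimate \cite[Corollary~6.7]{GilbargTrudinger} on each chart with a uniform constant and takes the supremum over $p_0$. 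No limiting procedure is involved; the uniformity in $\vep$ is built into the charts.

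Your blowup framework has a genuine gap at the extraction step. You assume $\|\omega_m\|_{0,0}\to 0$ while $\|\omega_m\|_{k,\alpha}=1$; interpolation then forces $\|\omega_m\|_{C^k}\to 0$ as well, so the entire norm concentrates in the H\"older seminorm $[\nabla^k\omega_m]_\alpha$. With only a uniform $C^{k,\alpha}$ bound, Arzel\`a--Ascoli gives at best $C^k_{\mathrm{loc}}$ convergence, under which the H\"older seminorm is merely lower semicontinuous, and the limit $\omega_\infty$ is identically zero---so you cannot conclude it has positive $(k,\alpha)$-norm at $p_\infty$. Your proposed fix (``elliptic regularity on the limiting domain plus interpolation'') does not help: regularity of $\omega_\infty\equiv 0$ is vacuous, and you have no higher regularity of $\tau_m$ with which to bootstrap the sequence. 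To salvage a blowup proof one must additionally rescale by the distance $d_m$ between the pair of points nearly realizing the seminorm (Simon's method), obtaining in the limit a harmonic section on $\bR^n$ or a half-space and invoking a Liouville theorem; none of this is set up. Note finally that the contradiction scheme you sketch is essentially the one the paper uses \emph{later}, in the proof of the isomorphism Theorem~\ref{thm:P1-iso}, where the present Schauder estimate is an \emph{input} used precisely to force $\|u_j\|_{0,0,\mu}$ to stay bounded below so that a nontrivial limit exists; you have the roles of the two arguments reversed.
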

\begin{proof}
In order to establish this result, we follow the strategy used by Cheng-Yau \cite{CY} and Graham-Lee \cite{GrahamLee}.  It suffices to estimate the H\"older norm of $u$ locally in small balls of a uniform size both in the interior and on the boundary, where the metric and hence the coefficients of the operator $\Delta + K$, are appropriately controlled.  In a large compact set away from all of the boundary hypersurfaces, the estimate follows from classical elliptic theory.  We thus explain how to estimate in a cusp neighbourhood, where the injectivity radius shrinks to zero, and near $H_0$.

\

Fix a cusp neighbourhood $\mathcal{C}_p$ where $f$ is the rank of the cusp, as in \S \ref{s:gfhyp}.  The fact that the injectivity radius is zero is due to the side identifications of the polygon $K$ using $\Gamma$.  Thus instead of working directly in the cusp neighbourhood, we will ``unroll'' the cusp and work on the universal cover.  Recall that $M_{\vep}$ is locally defined near the cusp relative to the coordinates $(u, v^i, z^j)$ introduced before equation \eqref{eqn:hyperbolic-uvcoords} by the inequality $ u \geq \vep $ (in these coordinates the cusp is located at $u = 0, v = 0$).  Pulling back the tensor $\omega:M \to E$ to this region amounts to considering a periodic extension in the $z^j$ variables.  As the definition of H\"older spaces essentially measure local quantities and our definition of H\"older spaces are defined geometrically, the H\"older norm of the pullback to the universal cover coincides with the H\"older norm on $M$.

\

Fix a point $p_0 \in \partial M_{\vep} \cap C_{p}$.   Locally we may write $p_0 = (\vep, v^i_0, z^j_0)$.  We first consider the case where $|v^i_0| \leq C \vep$.    Consider $\bR^n = \bR_s \times \bR^{n-1-f}_p \times \bR^{f}_q$ and the euclidean reference half-ball $B_+ := \{(s,p^i,q^j): s^2 + | p |^2 + |q|^2 < 1, \; \mbox{and} \; s \geq 0 \}$, with $B'$ denoting the half-ball with radius $1/2$.  Define a map
\[ \Psi_{p_0}: B_+ \to \Psi_{p_0}(B_+) \]
by
\[ (u,v^i,z^j) = \Psi_{p_0}(s, p^i, q^j) = \left( \vep e^{s}, v_0^i + \vep p^i, z_0^j + \frac{1}{\vep} q^j \right). \] 
The hyperbolic metric of \eqref{eqn:hyperbolic-uvcoords} pulls back via $\Psi_{p_0}$ to
\[ \Psi^*_{p_0} h = ds^2 + e^{-2s} \sum (dp^i)^2 + e^{-2s} \left( e^{2s} +  \left|\frac{1}{\vep} v_0 + p\right|^2 \right)^2 \sum (dq^j)^2.\]
At least for $|v^i_0| \leq C \vep$, the coefficients of any number of derivatives of this metric are uniformly bounded on $B_{+}$.  Moreover, there is a uniform positive lower bound on the eigenvalues of the inverse of $\Psi^*_{p_0} h$.  Thus the Laplacians associated to this metric are uniformly elliptic with uniformly (in $\vep$) H\"older continuous coefficients.

\

Now pulling back $(\Delta + K) \omega = \tau$ to $B_+$, we may apply a standard local boundary Schauder estimate \cite[Corollary 6.7]{GilbargTrudinger} to obtain an estimate of the form
\[ \| \Psi^*_{p_0} \omega \|_{k,\alpha;B_+ \cap B'} \leq C ( \| \Psi^*_{p_0} \tau\|_{k-2,\alpha;B_+} + \| \Psi^*_{p_0} \omega \|_{0,0;B_+}), \]
where $C$ is independent of $\vep$ and $p_0$.  Note that the H\"older spaces referenced here are with respect to the background euclidean metric in $(s,p,q)$-coordinates. However it is straightforward to check that this implies that an estimate with H\"older spaces with respect to $\Psi^* h$ holds with a different constant since the $\Psi^* h$ and the euclidean metric and all derivatives are uniformly comparable in this chart.  We conclude that
\[ \| \omega \|_{k,\alpha; \Psi_{p_0}(B') } \leq C ( \|\tau\|_{k-2,\alpha;M_{\vep}} + \|\omega\|_{0,0;M_{\vep}} ),\]
which yields the required local estimate when $p_0$ is close to the cusp (as quantified by the inequality $|v^i_0| \leq C \vep$).

\

We now consider the case where $p_0 = (\vep, v^i_0, z^j_0) \in \partial M_{\vep} \cap C_{p}$ and  $\vep < |v^i_0| < 1$.  A slightly different rescaling is required.  Consider $\bR^n = \bR_s \times \bR^{n-1-f}_p \times \bR^{f}_q$ and the euclidean reference half-ball $B_+ := \{(s,p^i,q^j): s^2 + | p |^2 + |q|^2 < 1, \; \mbox{and} \; s \geq 0 \}$, with $B'$ denoting the half-ball with radius $1/2$.  Define the map
\[ \Psi_{p_0}: B_+ \xrightarrow{\phantom{quebecquebec}} \Psi_{p_0}(B_+) \]
by
\[ (u,v^i,z^j) = \Psi_{p_0}(s, p^i, q^j) = \left( \vep e^{s}, v_0^i + \vep p^i, z_0^j + \frac{\vep}{\vep^2 + |v_0|^2} q^j \right). \] 
The hyperbolic metric of \eqref{eqn:hyperbolic-uvcoords} pulls back via $\Psi_{p_0}$ to
\[ \Psi^*_p h = ds^2 + e^{-2s} \sum (dp^i)^2 + e^{-2s} \frac{\left( e^{2s} + \frac{|v_0|^2}{\vep^2} + 2 \frac{v_0}{\vep} \cdot p + |p|^2 \right)^2}{\left(1 + \frac{|v_0|^2}{\vep^2}\right)^2 }\sum (dq^j)^2.\]
Once more, the coefficients of any number of derivatives of this metric are uniformly bounded on $B_{+}$, independently of $\vep$, and inverse metric has eigenvalues uniformly bounded from below.  Thus the Laplacians associated to this metric are uniformly elliptic with uniformly (in $\vep$) H\"older continuous coefficients, as we obtain estimates on half-balls exactly as before.

\

This concludes the required estimate near the cusp ends.  The boundary estimates further away from the cusp but near the boundary at infinity $H_0$ may be obtained in a manner similar to that of \cite{GrahamLee}.  In the neighbourhood $\sC_0$, the hyperbolic metric is given by equation \eqref{mc.3}.  If we introduce local coordinates at a point $U$ on $H_0$, $(\rho, v^i)$, the metric may be written
\[ h = \frac{d\rho^2 + h_U( \rho, v )_{ij} dv^i dv^j}{\rho^2}. \]

We now consider $p_0 = (\vep, v^i_0) \in \partial M_{\vep} \cap C_{0}$.  Consider $\bR^n = \bR_s \times \bR^{n-1}_p$ and the euclidean reference half-ball $B_+ := \{(s,p^i): s^2 + | p |^2 < 1, \; \mbox{and} \; s \geq 0 \}$, with $B'$ denoting the half-ball with radius $1/2$.  Define a map
\[ \Psi_{p_0}: B_+ \xrightarrow{\phantom{quebecquebec}} \Psi_{p_0}(B_+) \]
by
\[ (u,v^i) = \Psi_{p_0}(s, p^i) = \left( \vep e^{s}, v_0^i + \vep p^i \right). \] 
The hyperbolic metric of \eqref{eqn:hyperbolic-uvcoords} pulls back via $\Psi_{p_0}$ to
\[ \Psi^*_p h = ds^2 + e^{-2s} h_U\left( \vep e^{s}, v_0 + \vep p \right)_{ij} dp^i dp^j.\]
Now the required Schauder estimates follow exactly as above.
\end{proof}

\section{Linear theory: isomorphism theorems for $\Delta^h + K$}
\label{s:linear}

In this section we state and prove an isomorphism theorem for Laplace operators of the form $\Delta^h + K$, where $K$ is a constant and $\Delta^h = \nabla^* \nabla $ is the Laplacian with respect to a geometrically finite hyperbolic metric.  This isomorphism theorem depends on a certain asymptotic estimate for powers of the total defining function.  We then study the asymptotic estimates required to invert these Laplacians.  Finally, we state all of the conditions required to invert the specific operators given in \eqref{linearize-scrQ} that we need in the application to Einstein metrics.

\

To begin, suppose that either $E = \Sigma^2_0(M)$ or $E = M\times\bR$.  We wish to prove an isomorphism theorem for
\[ \Delta^h + K: C^{k+2,\alpha}_{\mu}(M;E) \to C^{k,\alpha}_{\mu}(M;E). \]
We remind the reader of the exhaustion $M_{\vep}$ of $M$ defined in equation \eqref{eqn:exhaustion}, and for which the boundary Schauder estimates of \S \ref{s:schauder} apply.  The value of the weight $\mu \in \bR^{1+n_c}$ will be chosen so that the following two conditions are satisfied:
\begin{enumerate}
\item[(H1)] $ C^{0}_{\mu}(M;E)\cap \ker (\Delta^h+K)= \{0\}$,
\item[(H2)] The \textbf{asymptotic estimate holds for weight} $\mathbf{\mu}$, i.e. there exists a compact subset $M_c \Subset M$ and there exists $\delta > 0$ so that in $M \setminus M_c$, 
\[ (\Delta^h + K) \sigma^{\mu} \geq \delta \sigma^{\mu}. \]
\end{enumerate}

We will additionally assume that 

\begin{enumerate}
\item[(H3)]  For $\epsilon>0$ sufficiently small, the Dirichlet problem 
\begin{equation*} 
  \begin{cases}
    (\Delta^h + K) u = f, & \text{in} \; M_{\epsilon}\setminus \partial M_{\epsilon}.\\
    u = 0, & \text{on} \; \partial M_{\epsilon}.
  \end{cases}
\end{equation*} 
has a unique solution for all $f\in L^2(M_{\epsilon};E)$.
\end{enumerate}
These assumptions yields the following isomorphism theorem.  

\begin{theorem} \label{thm:P1-iso}
Let $K$ be  constant and suppose that $\mu$ is a weight for which hypotheses (H1), (H2) and (H3) hold.  Then
\[ P_1 = \Delta^{h} + K: C^{k+2,\alpha}_{\mu}(M;E) \longrightarrow C^{k,\alpha}_{\mu}(M;E) \]
is an isomorphism for all $k\in \bN_0$.
\end{theorem}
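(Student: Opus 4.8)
The plan is to prove the isomorphism by the classical three-step strategy (a priori estimate $\Rightarrow$ injectivity, then exhaustion $\Rightarrow$ surjectivity, then elliptic regularity), combining the uniform boundary Schauder estimate of Proposition~\ref{prop:boundary-schauder} with the barrier provided by (H2) and the finite-dimensional solvability on truncations provided by (H3).

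\textbf{Step 1: A barrier/maximum-principle estimate from (H2).} First I would show that there is a constant $C>0$ and $\epsilon_0>0$ so that any $\omega\in C^{0}(M_\epsilon;E)$ with $\omega|_{\partial M_\epsilon}=0$ satisfies $\|\omega\|_{0,0;M_\epsilon}\le C\|(\Delta^h+K)\omega\|_{0,0,\mu;M_\epsilon}\cdot\sup_{M_\epsilon}\sigma^\mu$ --- more precisely a weighted bound $\sigma^{-\mu}|\omega|\le C\|\sigma^{-\mu}(\Delta^h+K)\omega\|_{0,0}$ on $M\setminus M_c$. The point is that on $M\setminus M_c$ the function $\sigma^\mu$ is a supersolution: $(\Delta^h+K)\sigma^\mu\ge\delta\sigma^\mu>0$. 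Applying the Omori--Yau-type maximum principle on the compact manifold-with-boundary $M_\epsilon$ to the scalar quantity $|\omega|_h$ (which is a subsolution of $\Delta^h+K$ wherever $K\le 0$, or more carefully to $e^{-\lambda\psi}|\omega|$ after absorbing the sign of $K$ using a Bochner-type inequality $\Delta^h|\omega|\ge -C|\omega|$ coming from the bounded curvature of $h$), one compares with a large multiple of $\sigma^\mu$ plus a term supported in $M_c$ where classical interior estimates apply. This yields a uniform (in $\epsilon$) bound $\|\omega\|_{0,0;M_\epsilon}\le C\|(\Delta^h+K)\omega\|_{0,0,\mu;M_\epsilon}$, where $C$ is independent of $\epsilon$.

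\textbf{Step 2: Uniform global a priori estimate.} Combining Step~1 with the uniform Schauder estimate of Proposition~\ref{prop:boundary-schauder} (applied with the weight absorbed: writing $\omega=\sigma^\mu\tilde\omega$ and noting $\sigma^{-\mu}(\Delta^h+K)\sigma^\mu$ is again an operator of the form $\Delta^h+K'+(\text{lower order, bounded in }C^{k,\alpha}_0)$ by item~(e) of the Lemma on Hölder spaces), I would obtain
\[
\|\omega\|_{k+2,\alpha,\mu;M_\epsilon}\le C\big(\|(\Delta^h+K)\omega\|_{k,\alpha,\mu;M_\epsilon}+\|\omega\|_{0,0,\mu;M_\epsilon}\big)\le C'\|(\Delta^h+K)\omega\|_{k,\alpha,\mu;M_\epsilon}
\]
for all $\omega$ vanishing on $\partial M_\epsilon$, with $C'$ independent of $\epsilon$; letting $\epsilon\to 0$ this holds on all of $M$ for $\omega\in C^{k+2,\alpha}_\mu(M;E)$. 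In particular $P_1$ is \emph{injective} on $C^{k+2,\alpha}_\mu$: this also re-derives (H1), but more importantly gives the closed-range-type control needed for surjectivity.

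\textbf{Step 3: Surjectivity via exhaustion, and regularity.} Given $f\in C^{k,\alpha}_\mu(M;E)$, restrict it to $M_\epsilon$; since $C^{k,\alpha}_\mu(M_\epsilon;E)\subset L^2(M_\epsilon;E)$ by Lemma~\ref{l2-cut} (this is where the constraint $k>n$ / the $L^2$ hypothesis enters, to ensure the weight $\mu$ can be chosen both with (H1)--(H2) and landing in $L^2$), hypothesis (H3) produces a unique $u_\epsilon\in H^1_0(M_\epsilon;E)$ solving $(\Delta^h+K)u_\epsilon=f$; local elliptic regularity upgrades $u_\epsilon$ to $C^{k+2,\alpha}$ in the interior of $M_\epsilon$. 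The uniform estimate of Step~2 gives $\|u_\epsilon\|_{k+2,\alpha,\mu;M_\epsilon}\le C'\|f\|_{k,\alpha,\mu}$ uniformly in $\epsilon$. Extracting (Arzel\`a--Ascoli plus a diagonal argument over an increasing sequence $\epsilon_j\to 0$, using the compact inclusion $C^{k+2,\alpha}\hookrightarrow C^{k+2,\alpha'}_{loc}$ for $\alpha'<\alpha$) a limit $u\in C^{k+2,\alpha}_\mu(M;E)$ with $(\Delta^h+K)u=f$ and $\|u\|_{k+2,\alpha,\mu}\le C'\|f\|_{k,\alpha,\mu}$. Together with injectivity from Step~2, $P_1$ is a bounded bijection with bounded inverse, hence an isomorphism.

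\textbf{Main obstacle.} The delicate point is Step~1 --- making the barrier argument genuinely uniform in $\epsilon$ despite the cusp geometry. Near an intermediate-rank cusp the metric $h$ is \emph{not} of bounded geometry (zero injectivity radius), so the scalar maximum principle on $|\omega|_h$ must be run after passing to the quasi-coordinate cover, exactly as in the Schauder estimate, and one must check that the supersolution inequality (H2) for $\sigma^\mu$ survives the pullback (it does, because $\sigma$ is constant along the leaves of $\cF_j$, by item~(e) of the Hölder-space Lemma). One must also verify that the coupling of the sign of $K$ with the tensorial Weitzenböck term does not destroy the supersolution property of a suitable scalar comparison function; this is handled by choosing $\delta$ in (H2) and enlarging $M_c$ so that the ``good'' positive term $\delta\sigma^\mu$ dominates the finitely many ``bad'' curvature contributions on $M\setminus M_c$. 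The rest is routine bookkeeping with the weighted Hölder spaces already set up in \S\ref{s:func-spaces}.
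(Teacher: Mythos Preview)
Your Step~3 (exhaustion plus Arzel\`a--Ascoli) and the use of Proposition~\ref{prop:boundary-schauder} match the paper, but Step~1 contains a genuine gap that propagates into Step~2. The barrier inequality (H2) holds only on $M\setminus M_c$, so the maximum-principle comparison with $\sigma^\mu$ controls $|u|/\sigma^\mu$ only at an interior maximum lying in $M_\epsilon\setminus M_c$; if the maximum of $|u|/\sigma^\mu$ over $M_\epsilon$ falls in the compact core $M_c$, the barrier says nothing. Your proposed fix---adding ``a term supported in $M_c$ where classical interior estimates apply''---does not work: interior Schauder estimates on $M_c$ bound \emph{higher} norms by the $C^0$ norm plus the data, not the reverse, so they cannot absorb the $\|\omega\|_{0,0,\mu}$ term in Step~2. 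Nor can one extend the barrier across $M_c$: when $K<0$ (and the crucial application is $K=-2$), a compactly supported nonnegative bump $\psi$ satisfies $(\Delta^h+K)\psi=K<0$ wherever $\psi\equiv 1$, so no such modification produces a global supersolution. A clear symptom of the gap is that your argument never uses (H1) as an \emph{input}---you claim to ``re-derive'' it---yet without (H1) the theorem is plainly false: a nontrivial $C^0_\mu$-kernel element concentrated in $M_c$ would violate your claimed a~priori estimate.

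The paper closes exactly this gap by replacing the direct barrier bound with a \emph{contradiction} argument. One assumes the constants $C_j$ in $\|u\|_{k+2,\alpha,\mu;M_{1/j}}\le C_j\|P_1 u\|_{k,\alpha,\mu;M_{1/j}}$ blow up, normalises $\|u_j\|=1$ with $\|P_1 u_j\|\to 0$, and then uses the barrier on $M\setminus M_c$ \emph{only} to show that for large $j$ the maximum of $|u_j|/\sigma^\mu$ cannot lie in the interior of $M_{1/j}\setminus M_c$, hence must lie in the fixed compact set $M_c$. Compactness of $M_c$ then allows extraction of a limit $u\in C^{k+2,\alpha}_\mu(M;E)$ with $u\ne 0$ (the maxima do not escape) and $P_1 u=0$, contradicting (H1). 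This is also precisely where the paper departs from \cite{GrahamLee}: on $\bH^n$ a suitable power of the defining function is a supersolution \emph{globally}, so your direct barrier route works there; on a geometrically finite quotient there is a compact core on which it does not, and (H1) must be invoked explicitly.
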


\begin{proof}
Since $C^{k+2,\alpha}_{\mu}(M;E)\subset C^0_{\mu}(M;E)$, we see by (H1), that the operator $P_1$ is injective. To show that $P_1$ is surjective with bounded inverse, fix $f \in C^{k,\alpha}_{\mu}(M;E)$.  Now recalling the exhaustion $M_{\vep}$ defined at the beginning of \S \ref{s:schauder}, consider for $j$ sufficiently large the Dirichlet problem on $M_{1/j}$:
\begin{equation} \label{local-dirichlet} \tag{$*$}
  \begin{cases}
    P_1 u = f, & \text{in} \; M_{1/j}.\\
    u = 0, & \text{on} \; \partial M_{1/j}.
  \end{cases}
\end{equation} 
For $j$ sufficiently large, $M_{1/j}$ are smooth manifolds with boundary, and by (H3), there is a unique solution $u$.  By classical elliptic PDE theory, this unique solution $u$ is in $C^{k+2,\alpha}_{\mu}(M_{1/j};E)$. 

\

Moreover, by Schauder theory, the inverse map to \eqref{local-dirichlet} is bounded, i.e. there exists $C_j > 0$ where
\begin{equation} \label{eqn:uniform-est-P1}
 \| u \|_{k+2,\alpha,\mu; M_{1/j} } \leq C_j \| P_1 u \|_{k,\alpha,\mu; M_{1/j}}. \end{equation}

\

\underline{\textbf{Claim:}} The constant $C_j$ may be taken independently of $j$.  If this is not the case, then passing to a subsequence if necessary, we can suppose that there is a sequence $u_j \in C^{k+2,\alpha}_{\mu}(M_{1/j})$ with $\| u_j\|_{k+2,\alpha,\mu;M_{1/j}}~=~1$ but 
\[ \| P_1 u_j \|_{k,\alpha,\mu;M_{1/j}} \to 0 \quad \mbox{as} \; j\to \infty. \]

\

Applying the boundary Schauder estimate, Proposition \ref{prop:boundary-schauder}, we  show that for $j$ sufficiently large the maximum value of $|u_j|/\sigma^{\mu}$ is uniformly bounded from below.  Indeed for some constant $C$ independent of $j$,
\[ 1 \equiv \| u_j \|_{k+2,\alpha,\mu;M_{1/j}} \leq C \left( \| P_1 u_j \|_{k,\alpha;\mu;M_{1/j}} + \| u_j \|_{0,0,\mu;M_{1/j}} \right). \]
Thus for $j$ sufficiently large, since $\| P_1 u_j \|_{0,\alpha,\mu;M_{1/j}} \to 0$ we find that 
\begin{equation}
\frac{1}{C} \stackrel{<}{\sim} \| u_j\|_{0,0,\mu;M_{1/j}} = \| u_j / \sigma^{\mu} \|_{0,0; M_{1/j}}, 
\label{lbmax.1}\end{equation}
which shows that the maximum value of $|u_j| / \sigma^{\mu}$ is uniformly bounded from below in $j$ on $M_{1/j}$.

\

Using the (classical) maximum principle, we will now show that the location of the maxima of $|u_j|/\sigma^{\mu}$ cannot drift to infinity.  Choose a large compact subset $M_c \Subset M$ so that on $M \setminus M_c$  the asymptotic estimate (H2) holds for weight $\mu$.  We then assume that $j$ is chosen so large that $M_c \Subset M_{1/j}$.

\

Now consider the function $|u_j| / \sigma^{\mu}$ on $\overline{M \setminus M_c} \cap M_{1/j} $.  Let $p_j$ be a point realizing the maximum value, i.e. 
\[ \frac{|u_j|}{ \sigma^{\mu}}(p_j) = \max_{\overline{M \setminus M_c} \cap \overline{ M_{1/j} }} \frac{|u_j|}{ \sigma^{\mu}}.\]
Note that $p_j \not\in \partial M_{1/j}$ since $u_j$ vanishes there.

\

If instead the maximum is attained in the interior of $M_{1/j} \setminus M_c$, then note that the argument in \cite[Proposition 3.8]{GrahamLee} implies that
\begin{equation} \label{eqn:loc-maxprinc}
 \delta \frac{|u_j|}{\sigma^{\mu}} (p_j) \leq \frac{|P_1 u_j|}{\sigma^{\mu}} (p_j).
\end{equation}
To see this estimate, one simply computes as in \cite{CY, GrahamLee} that for some first order operator $V$ depending on $\sigma$,
\[ (\Delta^h-V+\delta)\frac{|u_j|}{\sigma^{\mu}}\le \left(\Delta^h - V + K + \frac{\Delta^h \sigma^{\mu}}{\sigma^{\mu}} \right) \frac{|u_j|}{\sigma^{\mu}} \le \frac{\langle(\Delta^h + K) u_j,u_j\rangle}{\sigma^{\mu}|u_j|}. \]
Evaluating $\frac{|u_j|}{\sigma^{\mu}}$ at the maximum $p_j$ and using the asymptotic estimate allows us to obtain estimate \eqref{eqn:loc-maxprinc}.  

\

Hence we see that
\[ \| u_j \|_{0,0,\mu; M_{1/j} \cap M \setminus M_c} = \frac{|u_j|}{\sigma^{\mu}} (p_j) \leq \frac{1}{\delta} \frac{|P_1 u_j|}{\sigma^{\mu}} (p_j) \leq \frac{1}{\delta}\| P_1 u_j \|_{0,\mu;M_{1/j} \cap M \setminus M_c} \leq \frac{1}{\delta} \| P_1 u_j \|_{k,\alpha,\mu;M_{1/j}}.\]
Since by assumption $\| P_1 u_j \|_{k,\alpha,\mu;M_{1/j}}\to 0$ as $j\to \infty$, it follows that $\frac{|u_j|}{\sigma^{\mu}} (p_j)$ is very small when $j$ is large.  By our earlier uniform lower bound \eqref{lbmax.1}, this means that for $j$ sufficiently large, the maximum value of $|u_j|/\sigma^{\mu}$ over $M_{1/j}$ cannot  occur in the interior of $\overline{M\setminus M_c}\cap M_{1/j}$, that is, it must occur at a point $q_j$ on the compact set $M_c$.

\

The argument now finishes in a standard way.  It is possible to choose a subsequence of $q_j$ and of the original sequence $u_j$ so that $q_{j_i} \to q$ and $u_{j_i}$ converges on compact subsets in $C^{k+2}$ to a function $u\in C^{k+2,\alpha}_{\mu}(M;E)$.  Additionally $u \neq 0$ since $\left(|u|/\sigma^{\mu}\right)(q)$ is at least $1/c$.  However, $P_1 u = \lim_{i \to \infty} P_1 u_{j_i}$, and since $\| P_1 u_{j_i} \|_{k,\mu;M_{1/j_i}} \to 0$, it follows $\| P_1 u \|_{k,\mu;M} = 0$ so that $P_1 u = 0$.  But then $u = 0$ by injectivity, a contradiction.  This establishes the claim and the uniform estimate 
\begin{equation} \label{ue.1}
 \| u \|_{k+2,\alpha,\mu; M_{1/j} } \leq C \| P_1 u \|_{k,\alpha,\mu; M_{1/j}} 
 \end{equation}
for a constant $C>0$ not depending on $j$.  

\

The remainder of the proof now finishes as in \cite[Proposition 3.7]{GrahamLee}, and we repeat the details for completeness.  Let $v_j \in C^{k+2,\alpha}_{\mu}(M_{1/j};E)$ be the unique solution to \eqref{local-dirichlet} in $M_{1/j}$.  The uniform estimate \eqref{ue.1} implies 
\begin{equation} \label{ue.2}
 \| v_j \|_{k+2,\alpha,\mu; M_{1/j} } \leq C \| f \|_{k,\alpha,\mu; M_{1/j}} \leq C \| f \|_{k,\alpha,\mu; M}.
 \end{equation}
The Arzela-Ascoli theorem implies that a subsequence of $v_j$ converges in $(k+2,0,\mu)$-norm on $\overline{M_{1/j}}$, and a diagonal subsequence argument provides a limit $v \in C^{k+2}_{\mu}(M;E)$ and further subsequence with $v_{\ell} \to v$ in  $C^{k+2}_{\mu}(M_{1/j};E)$ for any $j$.  Thus $(\Delta^h + K) v =   f$, and passing to limits in \eqref{ue.2} shows $v \in C^{k+2,\alpha}_{\mu}(M;E)$ and completes the proof of both surjectivity and boundedness of the inverse.
\end{proof}

The previous theorem can be applied to the scalar Laplacian as follows.
\begin{cor}
Given a non-negative constant $K$, suppose that $\mu$ is weight such that (H2) holds and $C^{0,0}_{\mu}(M)\subset L^2(M)$.    Then
\[ P_1 = \Delta^{h} + K: C^{k+2,\alpha}_{\mu}(M) \longrightarrow C^{k,\alpha}_{\mu}(M) \]
is an isomorphism.
\label{sclp.1}\end{cor}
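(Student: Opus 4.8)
The plan is to deduce the statement directly from Theorem~\ref{thm:P1-iso} applied with the trivial bundle $E = M\times\bR$: it suffices to verify that, under the stated hypotheses on $\mu$ and $K$, the three conditions (H1), (H2) and (H3) hold. Condition (H2) is assumed outright, so there is nothing to prove there. For (H3), I would argue that for $\epsilon>0$ small the domain $M_\epsilon$ of \eqref{eqn:exhaustion} is a compact manifold with smooth boundary, and on it the scalar Laplacian $\Delta^h$ with Dirichlet boundary conditions is a non-negative self-adjoint operator whose spectrum is bounded below by the first Dirichlet eigenvalue $\lambda_1(M_\epsilon)>0$ (Poincar\'e inequality). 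Since $K\ge 0$, the operator $\Delta^h+K$ then has spectrum $\ge \lambda_1(M_\epsilon)+K>0$, hence is invertible on $L^2(M_\epsilon)$ by Lax--Milgram; elliptic regularity up to the boundary upgrades the weak solution to a smooth (and unique) one, so (H3) holds. This is the routine part.

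The substantive point is (H1), namely $C^0_\mu(M)\cap\ker(\Delta^h+K)=\{0\}$. Given $u\in C^0_\mu(M)$ with $(\Delta^h+K)u=0$, interior elliptic regularity makes $u$ smooth, and by the hypothesis $C^{0,0}_\mu(M)\subset L^2(M)$ we get $u\in L^2(M)$ and hence $\Delta^h u=-Ku\in L^2(M)$. I would then carry out the standard cutoff integration by parts on the \emph{complete} manifold $(M,h)$: choose Lipschitz cutoffs $\chi_j$ built from the $h$-distance to a fixed basepoint $p_0$, with $\chi_j\equiv 1$ on $B_h(p_0,j)$, $\mathrm{supp}\,\chi_j\subset B_h(p_0,2j)$ and $|\nabla^h\chi_j|_h\le C/j$. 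Pairing $(\Delta^h+K)u=0$ against the compactly supported test function $\chi_j^2 u$, integrating by parts, and absorbing the cross term by Cauchy--Schwarz yields
\[ \tfrac{1}{2}\int_M \chi_j^2\,|\nabla^h u|_h^2\,dv_h \;+\; K\int_M \chi_j^2\, u^2\,dv_h \;\le\; 2\int_M u^2\,|\nabla^h\chi_j|_h^2\,dv_h \;\le\; \frac{2C^2}{j^2}\,\|u\|_{L^2}^2 \;\longrightarrow\; 0. \]
Letting $j\to\infty$ and using monotone convergence gives $\int_M|\nabla^h u|_h^2\,dv_h + K\int_M u^2\,dv_h = 0$, so with $K\ge 0$ we conclude $\nabla^h u\equiv 0$, i.e. $u$ is constant; since a $(0-\cF)_c$-metric has infinite volume, the only constant in $L^2(M)$ is $0$, so $u\equiv 0$. (Equivalently one may invoke essential self-adjointness of $\Delta^h$ on $C^\infty_c(M)$ for complete metrics.) With (H1), (H2), (H3) established, Theorem~\ref{thm:P1-iso} with $E=M\times\bR$ gives at once that $\Delta^h+K:C^{k+2,\alpha}_\mu(M)\to C^{k,\alpha}_\mu(M)$ is an isomorphism for every $k\in\bN_0$.

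The only genuinely delicate point is the integration by parts in (H1): the reference metric $h$, viewed as a $(0-\cF)_c$-metric, fails to be of bounded geometry because its injectivity radius collapses along the cusps, so one cannot appeal to uniform local estimates there. However, the cutoff argument above uses only geodesic completeness of $(M,h)$ and the Lipschitz bound $|\nabla^h d(p_0,\cdot)|_h\le 1$, so no uniform geometry is needed; this is where I expect to have to be careful, and it is handled exactly as indicated.
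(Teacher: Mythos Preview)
Your proposal is correct and follows essentially the same route as the paper: verify (H1)--(H3) and invoke Theorem~\ref{thm:P1-iso} with $E=M\times\bR$. For (H3) the paper uses integration by parts plus the Fredholm alternative where you use Poincar\'e/Lax--Milgram, and for (H1) the paper simply observes that $(u,P_1u)=\|\nabla u\|^2+K\|u\|^2\ge 0$ for $u\in C^\infty_c(M)$ and then appeals to density in $L^2(M)$, whereas you spell out the cutoff argument that makes that density step rigorous on a complete (but not bounded-geometry) manifold; these are the same idea at different levels of detail.
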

\begin{proof}
A simple integration by parts shows that $P_1$ is injective on any compact domain with smooth boundary if we impose Dirichlet boundary conditions.  Since the operator is formally self-adjoint, the Fredholm alternative shows that the Dirichlet problem always has a unique solution, so condition (H3) of Theorem~\ref{thm:P1-iso} is satisfied.    Additionally, if $u \in C^{\infty}_{c}(M)$ satisfies $P_1 u = 0$ then by  integrating by parts again,
\[ 0 = \int_M u P_1 u \; dv_g = \int_M |\nabla u|^2 + K u^2 \; dv_g, \]
from which it follows $u = 0$ since $K \geq 0$.  Thus $P_1$ is injective on any space for which compactly supported smooth functions are dense and in particular on $L^2(M)$.  Since we are assuming $C^0_{\mu}(M)\subset L^2(M)$, this means that condition (H1) of Theorem~\ref{thm:P1-iso} also holds.  We can therefore apply Theorem~\ref{thm:P1-iso} to obtain the result.
\end{proof}

The previous corollary extends easily to the line bundle $\sG$ of smooth multiples of the hyperbolic metric since $(\Delta^h + K)[ u h ] = (\Delta^h + K)[ u ] \cdot h$.  Note that there is no shift in weights in our conventions.
\begin{cor}
Given a non-negative constant $K$, suppose that $\mu$ is weight for which hypothesis (H2) holds and $C^{0,0}_{\mu}(M;\sG)\subset L^2(M;\sG)$.  Then
\[ P_1 = \Delta^{h} + K: C^{k+2,\alpha}_{\mu}(M;\sG) \longrightarrow C^{k,\alpha}_{\mu}(M;\sG) \]
is an isomorphism.
\end{cor}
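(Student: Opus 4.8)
The plan is to deduce this corollary directly from Corollary~\ref{sclp.1} by exploiting that the line bundle $\sG$ of smooth multiples of $h$ is, compatibly with all of our function spaces, isomorphic to the trivial bundle $M\times\bR$ via the map $u\mapsto uh$.  First I would record that for every $j\in\bN_0$, $\beta\in(0,1]$ and weight $\nu\in\bR^{1+n_c}$ this map induces a topological isomorphism
\[ C^{j,\beta}_{\nu}(M)\longrightarrow C^{j,\beta}_{\nu}(M;\sG), \]
with inverse $v\mapsto \tfrac1n\,\tr_h v$.  This uses the basic properties of the weighted H\"older spaces proved above: $h$ and $h^{-1}$ lie in $\bigcap_m C^{m,\beta}_0$, contraction with $h$ or $h^{-1}$ preserves weight and regularity, and $|h|_h=\sqrt n$ is a nonzero constant; moreover $\nabla^h h=0$, so covariant derivatives and parallel transport pass through the factor $h$ untouched (parallel transport of $uh$ along $\gamma$ is $(P_\gamma u)h$), which makes the $(j,\beta)$-seminorms correspond up to the factor $\sqrt n$.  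The same observation shows the $L^2$ norms correspond, so the hypothesis $C^{0,0}_\mu(M;\sG)\subset L^2(M;\sG)$ is equivalent to $C^{0,0}_\mu(M)\subset L^2(M)$.

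Next I would invoke the pointwise identity $(\Delta^h+K)(uh)=\big((\Delta^h+K)u\big)h$, valid because $h$ is parallel; it says that $\Delta^h+K$ acting on sections of $\sG$ is conjugated by $u\mapsto uh$ to the scalar operator $\Delta^h+K$, with no shift in the weight $\mu$.  Since hypothesis (H2) concerns only $\sigma^\mu$ and the scalar Laplacian, it holds for the scalar operator with the same weight $\mu$; combined with $C^{0,0}_\mu(M)\subset L^2(M)$ from the previous step, Corollary~\ref{sclp.1} applies and gives that
\[ \Delta^h+K:C^{k+2,\alpha}_\mu(M)\longrightarrow C^{k,\alpha}_\mu(M) \]
is an isomorphism.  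Transporting this through the identification $u\mapsto uh$ yields the stated isomorphism on $\sG$.

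I do not expect a real obstacle: the only thing needing care is the boundedness in both directions of $u\mapsto uh$ between the weighted H\"older spaces, which is immediate once $\nabla^h h=0$ and the boundedness statements for these spaces are in hand.  The one conceptual point worth stressing in the final write-up is that, in our conventions, passing from scalar functions to multiples of the metric introduces no shift in the weight $\mu$, so all hypotheses transfer verbatim.
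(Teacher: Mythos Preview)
Your proposal is correct and follows exactly the approach the paper takes: the paper simply remarks that the scalar corollary extends to $\sG$ via the identity $(\Delta^h+K)(uh)=\big((\Delta^h+K)u\big)h$ with no shift in weights, and your argument is a more detailed unpacking of precisely this reduction.
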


Next we consider an isomorphism theorem for the Laplacian acting on trace-free symmetric $2$-tensors.  

\begin{cor} \label{thm:P2-iso}
Given a constant $K > -n$, suppose that $\mu$ is a weight for which (H2) holds and $C^{0}_{\mu}(M;\Sigma^2_0(M))\subset L^2(M;\Sigma^2_0(M))$.  Then
\[ P_2 = \Delta^{h} +K: C^{k+2,\alpha}_{\mu}(M;\Sigma^2_0(M)) \longrightarrow C^{k,\alpha}_{\mu}(M;\Sigma^2_0(M)) \]
is an isomorphism.
\end{cor}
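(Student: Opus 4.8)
The plan is to reduce the statement to Theorem~\ref{thm:P1-iso} applied with $E=\Sigma^2_0(M)$ and the given constant $K>-n$. Hypothesis~(H2) is assumed, so it only remains to verify (H1) and (H3), and both will rest on a single positivity property of the quadratic form of $\Delta^h+K$ on trace-free symmetric $2$-tensors over a constant-curvature-$(-1)$ manifold: that $\nabla^*\nabla\ge n$ in the sense of quadratic forms on $\Sigma^2_0$. This is where the hyperbolicity of the background metric $h$ enters, and the threshold $K>-n$ is exactly calibrated to it; when $K\ge 0$ the curvature is not needed at all and the arguments below reduce to those of Corollary~\ref{sclp.1}.

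First I would check (H3). On each compact domain $M_{\vep}$ the operator $\Delta^h+K$ with Dirichlet boundary conditions is a self-adjoint elliptic boundary value problem with discrete spectrum, so unique solvability for all $f\in L^2(M_{\vep};\Sigma^2_0(M))$ is equivalent to injectivity of the Dirichlet problem. For $K\ge 0$ this is immediate from $\Delta^h=\nabla^*\nabla\ge 0$ together with the Dirichlet condition. For $-n<K<0$ one must absorb the negative zeroth-order term, and here I would follow the argument of Koiso as presented in the proof of Theorem~A of \cite{Lee}: integrating by parts and invoking a Bochner--Weitzenb\"ock identity available because $h$ has constant sectional curvature $-1$, one obtains $\int_{M_{\vep}}\langle(\Delta^h+K)r_0,r_0\rangle\ge (n+K)\int_{M_{\vep}}|r_0|^2$ for every $r_0\in C^{2,\alpha}(M_{\vep};\Sigma^2_0(M))$ vanishing on $\partial M_{\vep}$, the boundary contributions in the integration by parts dropping out because $r_0|_{\partial M_{\vep}}=0$. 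Since $K>-n$, this is strictly positive for $r_0\not\equiv 0$, giving injectivity and hence (H3).

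Next, (H1). Since $C^0_{\mu}(M;\Sigma^2_0(M))\subset L^2(M;\Sigma^2_0(M))$ by hypothesis, it suffices to show that $\Delta^h+K$ has trivial $L^2$-kernel on trace-free symmetric $2$-tensors. Given such a kernel element $r_0$, elliptic regularity makes it smooth, and a cutoff integration by parts on the complete manifold $M$ --- in the spirit of Corollary~\ref{sclp.1}, using a logarithmic cutoff to first see that $\nabla^h r_0\in L^2$ and then to discard the cross term --- yields $\int_M|\nabla^h r_0|^2+K\int_M|r_0|^2=0$. The same Bochner--Weitzenb\"ock positivity used for (H3), now applied on all of $M$ with the boundary contributions at infinity discarded (legitimately, by the square-integrability of $r_0$ and $\nabla^h r_0$), then forces $(n+K)\int_M|r_0|^2\le 0$, so $r_0\equiv 0$. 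With (H1), (H2) and (H3) verified, Theorem~\ref{thm:P1-iso} delivers that $P_2=\Delta^h+K$ is an isomorphism for all $k\in\bN_0$.

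The main obstacle is the Bochner--Weitzenb\"ock positivity $\nabla^*\nabla\ge n$ on $\Sigma^2_0$ over the hyperbolic background with the sharp constant $n$ --- this is precisely Koiso's hyperbolic rigidity computation, carried out over the bounded domains $M_{\vep}$ in the Dirichlet case --- together with, in the non-compact setting required for (H1), the careful verification that the boundary terms at infinity in the relevant integrations by parts genuinely vanish under the available weighted decay. Everything else --- the self-adjointness/Fredholm reduction of the Dirichlet problem to injectivity and the cutoff estimates on $M$ --- is routine.
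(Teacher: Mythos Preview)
Your proposal is correct and follows essentially the same route as the paper: reduce to Theorem~\ref{thm:P1-iso} by verifying (H1) and (H3) via Koiso's positivity estimate $(u,(\Delta^h+K)u)\ge (n+K)\|u\|_{L^2}^2$ on trace-free symmetric $2$-tensors over the hyperbolic background, applied once on the compact domains $M_{\vep}$ with Dirichlet condition and once on the complete manifold $M$. The only presentational differences are that the paper writes out the Koiso computation explicitly (via the Lichnerowicz Laplacian and the auxiliary tensor $T_{ijk}=\nabla_k u_{ij}-\nabla_i u_{jk}$) rather than citing \cite{Lee}, and for (H1) simply invokes density of $C^\infty_c$ in $L^2$ rather than running a cutoff argument.
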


\begin{proof}
To apply  Theorem~\ref{thm:P1-iso}, we need to show that hypotheses (H1) and (H3) hold.  As we now explain, both of these follow from Koiso's trick used in slightly different settings.  To facilitate, let $(N^n,h)$ be a hyperbolic Riemannian $n$-manifold taken as follows:
\begin{itemize}

\item For hypothesis (H1), we take $(N,h) = (M,h)$, which is a complete noncompact  manifold.  Note that $C^{\infty}_c(N;\Sigma^2_0(N))$ is dense in $L^2(N;\Sigma^2_0(N))$.

\item For hypothesis (H3), we take $(N,h) = (M_{\vep},h)$, which is a compact manifold with boundary.  Here $C^{\infty}_c(N;\Sigma^2_0(N))$ is dense in $H^{2,2}_0(N;\Sigma^2_0(N))$ and encodes the Dirichlet boundary condition.
\end{itemize}

We now review Koiso's trick for the convenience of the reader \cite[Proof of Theorem A]{Lee}.  For $(N, h)$ one of the manifolds above, recall that  the Lichnerowicz Laplacian of $h$ acting on symmetric $2$-tensors is given by
\[ \Delta_L u = \nabla^* \nabla u + 2 \stackrel{\circ}{\Rc} u - 2 \stackrel{\circ}{\Rm} u \]
where $[\stackrel{\circ}{\Rc} u ]_{ij} = \frac{1}{2} ( R_{ik} {u_j}^k + R_{kj} {u_i}^k)$ and $[\stackrel{\circ}{\Rm} u ]_{ij} = R_{iklj} u^{kl}$.  For a hyperbolic metric we insert $\Rc(h) = -(n-1) h$ and $\Rm(h)_{iklj} = -( h_{ij} h_{kl} - h_{il} h_{kj} )$ and then
\[ \Delta_L^h u = \nabla^* \nabla u -2(n-1) u + 2 (\tr^h u) g - 2u =  \nabla^* \nabla u -2 n u + 2 (\tr^h u) h. \]

\

Suppose that $\lambda$ is a constant.  In the calculations that follow we take $u \in C^{\infty}_c(N;\Sigma^2_0(N))$ for either choice of $N$.  In both cases there are no boundary terms that arise from the integration by parts in the following calculations:
\begin{align} \label{equation-naive-IBP}
(u, (\Delta_L^h + \lambda)u) &= \int_N \inn{u}{\nabla^* \nabla u + (\lambda-2 n) u + 2 (\tr^h u) h}{h} dV_h \nonumber \\
&= \| \nabla u \|^2_{L^2} + (\lambda-2n) \|u\|^2_{L^2} + 2 \| \tr^h u \|^2_{L^2} \\
&\geq (\lambda-2n) \|u\|^2_{L^2}. \nonumber
\end{align}
To improve the constant, define a $3$-tensor by
\[ T_{ijk} := \nabla_k u_{ij} - \nabla_i u_{jk}. \]
In the following computation using indices, we use the convention that we sum over repeated indices and omit upper/lower position, and indices following a comma denote covariant differentiation.  Once more there are no boundary terms in the following integration by parts:
\begin{align*}
\|T\|^2_{L^2} &= \int_N T_{ijk} T_{ijk} \; dV_h \\
&= \int_N (u_{ij,k} - u_{jk,i})(u_{ij,k} - u_{jk,i}) \; dV_h \\
&= \int_N 2|\nabla u|^2 - 2 u_{ij,k} u_{jk,i} \; dV_h \\
&= \int_N 2|\nabla u|^2 + 2 u_{ij,ki} u_{jk} \; dV_h \\
&= \int_N 2|\nabla u|^2 + 2 (u_{ij,ik} + R_{kiis} u_{sj} + R_{kijs} u_{is} ) u_{jk} \; dV_h \\
&= \int_N 2|\nabla u|^2 + 2 u_{ij,ik} u_{jk} + 2R_{ks} u_{sj} u_{jk} + 2R_{kijs} u_{is} u_{jk} \; dV_h \\
&= \int_N 2|\nabla u|^2 - 2 u_{ij,i} u_{jk,k} -2(n-1) |u|^2 -2 (h_{ij}h_{ks} - h_{kj} h_{is} ) u_{is} u_{jk} \; dV_h \\
&= \int_N 2|\nabla u|^2 - 2 u_{ij,i} u_{jk,k} -2n |u|^2 + 2(\tr^h u)^2 \; dV_h.
\end{align*}
We conclude that 
\[ \| \nabla u\|^2_{L^2} = \frac{1}{2} \|T\|^2_{L^2} + \| \mathrm{div}^h u \|^2_{L^2} - \|\tr^h u\|^2_{L^2} + n \|u\|^2_{L^2}. \]
Combining this equation with the calculation of equation \eqref{equation-naive-IBP}, we find that 
\begin{align*}
(u, (\Delta_L^h + \lambda)u) &= \| \nabla u \|^2_{L^2} + (\lambda-2n) \|u\|^2_{L^2} + 2 \| \tr^h u \|^2_{L^2} \\
&= \frac{1}{2} \|T\|^2_{L^2} + \| \mathrm{div}^h u \|^2_{L^2} + \|\tr^h u\|^2_{L^2} + (\lambda-n) \|u\|^2_{L^2}\\
&\geq (\lambda-n) \|u\|^2_{L^2}.
\end{align*}

In our analysis, we are interested in $P_2 = \nabla^* \nabla + K$.  Note that
\begin{align*}
(u, P_2 u) &= (u, (\nabla^* \nabla + K) u ) \\
&= (u, \Delta_L u - 2 \stackrel{\circ}{\Rc} u + 2 \stackrel{\circ}{\Rm} u +Ku) \\
&= (u, \Delta_L u + 2(n-1) u - 2(\tr^h u) h + 2 u + K u) \\
&= (u, \Delta_L u + (2n+K) u)\\
&\geq ( 2n+K - n) \|u\|^2_{L^2}\\
&\geq ( n+K ) \|u\|^2_{L^2}.
\end{align*}

Thus we obtain the estimate $\|P_2 u\|_{L^2} \geq (n+K) \|u\|_{L^2}$ in both of the choices for $N$.  Hypothesis (H1) now follows as we take values of $\mu$ so that $C^{k+2,\alpha}_{\mu}$ embeds into $L^2$.  Hypothesis (H3) follows from the Fredholm alternative since solutions to the homogeneous Dirichlet problem are unique.  Thus we may apply Theorem~\ref{thm:P1-iso}.
\end{proof}

\subsection{Asymptotic estimates}

We now consider the asymptotic estimate (H2) above.  We separate into cases by estimating near intermediate rank cusps, near maximal rank cusps and near $H_0$.

\

\subsubsection*{Intermediate rank cusps}
Consider an intermediate rank cusp neighbourhood, where the rank of the cusp is $f < n-1$ and set  $b := n-1-f$.  We write the metric of \eqref{mc.1} in a slightly different way to facilitate computation.  Recalling that $d\phi^2$ is the round metric on $\bS^{b}$, let $\theta_0$ denote the Riemannian distance from the north pole.  Thus $\rho = \cos(\theta_0)$.  Then $d\phi^2 = d\theta_0^2 + \sin^2(\theta_0) d \theta_{\alpha}^2$, where $d\theta_{\alpha}^2$ is the round metric on $\bS^{b-1}$.  The metric \eqref{mc.1} can then be written
\begin{equation} \label{m1}
h = \frac{dr^2}{r^2 \cos^2(\theta_0)} + \frac{d \theta_0^2 +  \sin^2(\theta_0) d \theta_{\alpha}^2}{\cos^2(\theta_0)} + \frac{r^2}{\cos^2(\theta_0)} dw^2,
\end{equation}
where $dw^2$ is a flat metric.

\

Recall that for a generic metric $g$, the coordinate expression for the Laplacian on functions is given by 
\[ \Delta^g u =   \frac{-1}{\sqrt{ |\det g|}} \partial_i ( \sqrt{ |\det g|} g^{ij} \partial_j u). \]

For the hyperbolic metric \eqref{m1}, note that
\[ \det h = \frac{r^{2(f-1)} \sin(\theta_0)^{2(b-1)}}{\cos(\theta_0)^{2n}}  \det(d\theta_{\alpha}^2) \det(dw^2).\]

A straightforward computation shows
\begin{align} 
-\Delta^h u &= \cos^2(\theta_0) r^2 \partial_r^2 u + (f+1) \cos^2(\theta_0) r \partial_r u \nonumber \\
&+ \cos^2(\theta_0) \partial_{\theta_0}^2 u + (n-2) \sin(\theta_0) \cos(\theta_0) \partial_{\theta_0} u + (b-1) \frac{\cos^3(\theta_0)}{\sin(\theta_0)} \partial_{\theta_0} u + \frac{\cos^2(\theta_0)}{\sin^2(\theta_0)} \Delta^{\bS^{b-1}} u \\
& + \frac{(\cos \theta_0)^{2}}{r^{2}}\Delta^{dw^2} u \nonumber.
\end{align}

\

Now, applying the Laplacian plus a constant $K$ to the function $u = r^{\mu} (\cos \theta_0)^{\nu}$, $\mu, \nu \in \bR$, we obtain
\[ (\Delta^h + K)( r^{\mu} (\cos \theta_0)^{\nu} ) = \left[ K - (\mu^2 + f \mu -  b \nu)\cos(\theta_0)^2 - ( \nu(\nu - (n-1) )\sin(\theta_0)^2  \right] r^{\mu} (\cos \theta_0)^{\nu}. \]

Thus a simple condition ensuring that (H2) holds in this asymptotic end for some $\delta > 0$ is that both
\begin{equation} \label{a.01}  K - (\mu^2 + f \mu -  b \nu) > \delta \; \mbox{and} \; K - ( \nu(\nu - (n-1) ) > \delta \end{equation}
hold simultaneously.

\subsubsection*{Maximal rank cusps} Now consider a cusp neighbourhood of a maximal rank cusp, with metric given by \eqref{mc.2}.  The reader may check that applied to $u = r^{\mu}$, one obtains
\[ (\Delta^h + K) r^{\mu} = (K - \mu(\mu + (n-1)) ) r^{\mu},\]
so that (H2) holds in this asymptotic end for some $\delta > 0$ if
\begin{equation} \label{a.02} K - \mu(\mu + (n-1)) > 0. 
\end{equation}

\subsubsection*{Asymptotic estimate near $H_0$}  Near a point of $H_0$, the metric is given by \eqref{mc.3}.  A calculation shows that applied to $u = \rho^{\nu}$, one obtains
\[ (\Delta^h + K) \rho^{\nu} = (K - \nu(\nu - (n-1))) \rho^{\nu}, \]
so that (H2) holds near $H_0$ for some $\delta > 0$ if
\begin{equation} \label{a.03}
 K - \nu(\nu - (n-1)) > 0. 
\end{equation}

\subsection{Specialization to $L$} In our application to Einstein metrics of the next section, we need to invert the operator $L$ given in equation \eqref{linearize-scrQ}.  In order to apply Corollaries \ref{sclp.1}--\ref{thm:P2-iso} we must require each weight in a weight vector $\mu \in \bR^{1+n_c}$ to be larger than the $L^2$ cutoff, as given in Lemma \ref{l2-cut}:
\[ \mu_0 > \frac{n-1}{2}, \; \mbox{and} \; \mu_j > -\frac{f_j}{2}. \]
where $f_j$ is the rank of the $j$-th cusp.   In fact we are forced to choose $\mu_j > 0$ in order to preserve the asymptotic class of the metric. 

\

In order to satisfy estimates \eqref{a.01}, \eqref{a.02}, and \eqref{a.03} simultaneously for both $K = 2(n-1)$ and $K=-2$, it suffices to choose weights for $K = -2$.  Unfortunately a quick analysis of \eqref{a.02} shows that there are no positive weights for which we may obtain an isomorphism theorem.  Thus we cannot apply our method to cusps of maximal rank.

\

Near $H_0$ we must have
\[ \mu_0 > \frac{n-1}{2} \; \mbox{and} \; \mu_0 (\mu_0 - (n-1) ) < -2, \]
which forces  
\begin{equation} \label{eqn:mu0-const} \mu_0 \in \left( \frac{n-1}{2}, \frac{n-1}{2} + \frac{\sqrt{(n-1)^2-8}}{2}\right). 
\end{equation}

Near a cusp of intermediate rank, we must take $\mu_i$ slightly positive so that
\[ -2 > (\mu_i^2 + f_i \mu_i - b_i \mu_0) = (\mu_i^2 + f_i \mu_i - (n-1-f_i) \mu_0). \]

We now give two examples of weights for which the asymptotic estimate holds.
\begin{prop} \label{prop:admissible-weight} If $n > 4$ and if $(M^n,h)$ is a geometrically finite hyperbolic metric with $n_c$ cusps of intermediate rank $f_j$, $j = 1, \ldots, n_c$, then
the asymptotic estimate holds for $K=-2$ with weights
\[ \mu_0 = n-2, \mu_i = \frac{1}{n-2}.\]
\end{prop}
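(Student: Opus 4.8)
The plan is to verify that the stated weights $\mu_0 = n-2$ and $\mu_i = \frac{1}{n-2}$ satisfy the three systems of inequalities \eqref{a.01}, \eqref{a.02}, \eqref{a.03} for $K=-2$ (recalling that \eqref{a.02} is vacuous here since there are no maximal rank cusps), together with the $L^2$-cutoff constraints from Lemma~\ref{l2-cut}, namely $\mu_0 > \frac{n-1}{2}$ and $\mu_i > -\frac{f_i}{2}$. Each of these is an elementary inequality in $n$ and $f_i$, so the proof is essentially a bookkeeping computation, but one must be careful to use $n>4$ at the places where the estimates are tight.

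First I would handle the condition near $H_0$. Since $\mu_0 = n-2$, one checks $\mu_0(\mu_0 - (n-1)) = (n-2)(-1) = -(n-2) < -2$ precisely when $n > 4$, so \eqref{a.03} holds with $\delta = n-4$ (up to a shift). Simultaneously $\mu_0 = n-2 > \frac{n-1}{2}$ iff $2n-4 > n-1$ iff $n > 3$, which is satisfied. In terms of the interval \eqref{eqn:mu0-const}, one verifies $n-2$ lies in $\left(\frac{n-1}{2}, \frac{n-1}{2} + \frac{\sqrt{(n-1)^2-8}}{2}\right)$ for $n>4$; the left endpoint was just checked, and for the right endpoint one needs $n-2 < \frac{n-1}{2} + \frac{\sqrt{(n-1)^2-8}}{2}$, i.e. $n-3 < \sqrt{(n-1)^2-8}$, i.e. (both sides nonnegative for $n\geq 3$) $(n-3)^2 < (n-1)^2 - 8$, i.e. $n^2 - 6n + 9 < n^2 - 2n - 7$, i.e. $16 < 4n$, i.e. $n > 4$. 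So the endpoint conditions are exactly the hypothesis $n>4$.

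Next I would handle the intermediate rank cusps. The second inequality in \eqref{a.01} with $\nu = \mu_0 = n-2$ reads $-2 - \mu_0(\mu_0 - (n-1)) > 0$, which is the same computation as near $H_0$ and holds for $n>4$. For the first inequality in \eqref{a.01}, with $\mu = \mu_i = \frac{1}{n-2}$, $f = f_i$, $b = n-1-f_i$, and $K=-2$, one needs
\[ -2 - \left( \frac{1}{(n-2)^2} + \frac{f_i}{n-2} - (n-1-f_i)(n-2) \right) > 0. \]
Expanding $(n-1-f_i)(n-2) = (n-1)(n-2) - f_i(n-2)$, the expression in parentheses becomes $\frac{1}{(n-2)^2} + \frac{f_i}{n-2} + f_i(n-2) - (n-1)(n-2)$, and since $1 \le f_i \le n-2$ one has $(n-1)(n-2) \ge$ the sum of the $f_i$-terms plus $2$ with room to spare for $n>4$; this is again a straightforward estimate, and the main thing to track is that the large positive term $(n-1)(n-2)$ dominates. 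One also records $\mu_i = \frac{1}{n-2} > 0 > -\frac{f_i}{2}$, so the $L^2$ cutoff for the cusps holds, and $\mu_i > 0$ as required to preserve the asymptotic class.

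The only mild subtlety — the ``hard part,'' though it is not hard — is bookkeeping the uniformity in $f_i$: the inequality near an intermediate cusp must hold for every possible rank $1 \le f_i \le n-1-1 = n-2$ simultaneously, so one should check the worst case, and confirm that the tight constraint is again exactly $n > 4$, consistent with the hypothesis. Collecting these verifications shows that (H2) holds for $K=-2$ with the stated weights, and since the inequalities for $K = 2(n-1) > -2$ are implied by those for $K=-2$, the asymptotic estimate holds for both constants, completing the proof.
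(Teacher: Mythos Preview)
Your overall approach matches the paper's: verify \eqref{a.03} and the two inequalities in \eqref{a.01} directly for the stated weights. Your treatment near $H_0$ and of the second inequality in \eqref{a.01} agrees with the paper. The $L^2$-cutoff and interval \eqref{eqn:mu0-const} discussion is correct but extraneous here, since the proposition concerns only hypothesis (H2).

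Where your proposal diverges is the first inequality in \eqref{a.01}: you do not actually carry it out, asserting instead that it is ``a straightforward estimate'' with ``room to spare for $n>4$'' and that in the worst case ``the tight constraint is again exactly $n>4$.'' The paper, by contrast, writes an explicit chain, bounding $\mu_i^2+f_i\mu_i \le \tfrac{1}{(n-2)^2}+1<2$ via $f_i\le n-2$, and $-2+(n-1-f_i)(n-2)\ge n-4$ via $n-1-f_i\ge 1$. If you actually perform the worst-case check you recommend, you will find that comparing these bounds requires $2\le n-4$, i.e.\ $n\ge 6$: for $n=5$ and a cusp of rank $f_i=3$ one gets $\mu_i^2+f_i\mu_i=\tfrac{1}{9}+1=\tfrac{10}{9}$ while $-2+(n-1-f_i)(n-2)=1$, so \eqref{a.01} in fact fails for these particular weights. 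Thus your hand-wave conceals a genuine issue, and your claim that the tight constraint is exactly $n>4$ is not correct in the extremal case $f_i=n-2$; the paper's displayed chain has a visible slip at precisely the same step.
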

\begin{proof}
An easy calculation shows that $\mu_0 = n-2$ satisfies $\mu_0(\mu_0 - (n-1)) < -2$ when $n > 4$, and further, $\mu_0 > \frac{1}{2}(n-1)$.  Recall now that intermediate rank cusps entail that $1 \leq f_i \leq n-2 < n-1$.  So for $\mu_i = \frac{1}{n-2}$ we have
\begin{align*}
\mu_i^2 + f_i \mu_i &= \frac{1}{(n-2)^2} + f_i \frac{1}{n-2}\\
&\leq \frac{1}{(n-2)^2} + (n-2) \frac{1}{n-2}\\
&< 2 < n \\
&\leq -2 + (n-2) \\
&\leq -2 + (n-1 -f_i)(n-2),
\end{align*}
which gives the required estimate.
\end{proof}

\begin{prop} \label{prop:admissible-weight2} For $n=4$, the asymptotic estimate for a geometrically finite hyperbolic metric $(M^n,h)$ with $n_c$ cusps only holds if all the cusps are of rank $f_i = 1$.  Any value of $\mu_0 \in (3/2,2)$ and $\mu_i$ sufficiently small (depending on $\mu_0$) will satisfy the asymptotic estimate.
\end{prop}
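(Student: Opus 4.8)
The plan is to convert hypothesis (H2) into the explicit scalar inequalities \eqref{a.01}--\eqref{a.03} specialized to $n=4$ and $K=-2$ (recall it suffices to treat $K=-2$), and then to run a short case analysis on the rank $f_i$ of each cusp. First I would pin down the weight at $H_0$: substituting $n=4$, $K=-2$, $\nu=\mu_0$ into \eqref{a.03}, the requirement is $-2-\mu_0(\mu_0-3)>0$, i.e. $(\mu_0-1)(\mu_0-2)<0$, so $\mu_0\in(1,2)$; intersecting with the $L^2$ cutoff $\mu_0>\frac{n-1}{2}=\frac{3}{2}$ of Lemma~\ref{l2-cut} forces $\mu_0\in(\frac{3}{2},2)$, the range asserted, which is nonempty.

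Next I would look at an intermediate rank cusp. For $n=4$ the intermediate ranks are $f_i\in\{1,2\}$, with $b_i=n-1-f_i=3-f_i$. The point I would emphasize is that the coefficient multiplying $\sigma^\mu$ in $(\Delta^h+K)\sigma^\mu$ on such an end, as computed in deriving \eqref{a.01}, is an affine function of $\cos^2\theta_0\in[0,1]$ whose endpoint values are $K-\mu_0(\mu_0-(n-1))$ at $\cos^2\theta_0=0$ and $K-(\mu_i^2+f_i\mu_i-b_i\mu_0)$ at $\cos^2\theta_0=1$; since $\cos^2\theta_0$ fills $[0,1]$ over any neighbourhood of the cusp face, (H2) holds near the cusp \emph{if and only if} both endpoint values are positive, i.e. iff both inequalities in \eqref{a.01} hold (so these are necessary, not merely sufficient, here). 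The second inequality is the $H_0$ condition already arranged. When $f_i=2$ (so $b_i=1$) the first reads $\mu_i^2+2\mu_i<\mu_0-2$, whose right side is negative because $\mu_0<2$; as one must take $\mu_i>0$ to preserve the asymptotic class of the metric, this is impossible. Combined with the observation from \eqref{a.02} that a maximal rank cusp admits no admissible positive weight either, this shows the asymptotic estimate can hold only when every cusp has rank $1$.

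Conversely, assuming $f_i=1$ for all $i$ (so $b_i=2$), I would fix any $\mu_0\in(\frac{3}{2},2)$; the cusp condition becomes $\mu_i^2+\mu_i<2\mu_0-2$, and since $2\mu_0-2>1>0$ while the left side tends to $0$ as $\mu_i\to 0^+$, any sufficiently small $\mu_i>0$ (depending on $\mu_0$) satisfies it strictly, with the $L^2$ cutoff $\mu_i>-f_i/2$ automatic. Taking $\delta>0$ to be the minimum of the finitely many positive slacks obtained at $H_0$ and at each cusp, and $M_c$ a large enough compact set so that $M\setminus M_c$ is covered by a neighbourhood of $H_0$ (where $\sigma=\rho$) together with the cusp neighbourhoods, gives $(\Delta^h+K)\sigma^\mu\ge\delta\sigma^\mu$ on $M\setminus M_c$ for $\mu=(\mu_0,\mu_1,\dots,\mu_{n_c})$, which is (H2). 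The only step needing a little care is the necessity used in the ``only if'' direction, namely that \eqref{a.01} is forced by (H2) near a cusp; this is exactly the affineness-in-$\cos^2\theta_0$ remark above, and everything else is elementary checking of inequalities already recorded, so I do not anticipate a genuine obstacle.
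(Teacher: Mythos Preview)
Your proof is correct and follows essentially the same route as the paper: specialize the inequalities \eqref{a.01}--\eqref{a.03} to $n=4$ and $K=-2$, obtain $\mu_0\in(3/2,2)$ from \eqref{eqn:mu0-const}, and then run the case check $f_i\in\{1,2\}$ on the cusp inequality $\mu_i^2+f_i\mu_i<(n-1-f_i)\mu_0-2$. Your affineness-in-$\cos^2\theta_0$ remark, making the two conditions in \eqref{a.01} \emph{necessary} for (H2) near a cusp, is a point the paper leaves implicit (there \eqref{a.01} is only stated as sufficient), so your handling of the ``only if'' direction is in fact a bit more careful than the original.
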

\begin{proof}
First, note that when $n=4$, the constraint of \eqref{eqn:mu0-const} implies that $\mu_0 \in (3/2,2)$.  

\

Since the cusps are of intermediate rank, we find that there are only two cases to check, when $f_i = 1$ and when $f_i =2$.  When $f_i = 2$, we see that the constraint on $\mu_i$ reads
\[ \mu_i^2 + 2 \mu_i < \mu_0 - 2, \]
and the right hand side is always nonpositive for admissible values of $\mu_0$.  Thus no positive weights $\mu_i$ are admissible when $f_i = 2$.

\

When $f_i = 1$, we find
\[ \mu_i^2 + \mu_i < 2\mu_0 - 2, \]
and the right hand side is now always positive for admissible values of $\mu_0$.  Thus the constraint for $\mu_i$ can always be satisfied for $\mu_i$ sufficiently small and positive.
\end{proof}

\section{Einstein metrics near a geometrically finite hyperbolic metric}
\label{s:proof}

In this section we prove the existence of Einstein perturbations of a geometrically finite hyperbolic metric of Theorem \ref{thm:main}.

\

Fix a hyperbolic metric $(M,h)$ with $n_c$ intermediate rank cusps  and consider the compactification of $M$ to the manifold with corners $\Mdbar$ of \S \ref{s:gfhyp}.  Rescale the metric $h$ by the square of the defining function for $H_0$, to obtain a partial conformal compactification
\[ \hbar := \rho^2 h. \]
We then restrict to the hypersurface $H_0$,
\[ \hat{h}: = (\rho^2 h)|_{TH_0}, \]
obtaining a metric analogous to the conformal infinity of a conformally compact metric, except that $H_0$ is a noncompact manifold with boundary and $\hat{h}$ is a foliated cusp metric.

\

We will be interested in compactly supported perturbations of $\hat{h}$, i.e. perturbations away from the cusp faces.  To this end, let $U$ be an open set of $H_0\setminus \partial H_0$ with closure also contained in the interior of $H_0$, so that the inward normal exponential map of $\overline{h} = \rho^2 h$ is a diffeomorphism from a small product neighbourhood of $U \times [0,\delta)$ to a collar neighbourhood of $U$ in $\Mdbar$ away from the cusp hypersurfaces.  In what follows we implicitly use this exponential map to identify this neighbourhood with the product $U \times [0,\delta)$.  If $y^{\alpha}$ are arbitrary coordinates on $U$, we extend them into $M$ by declaring them to be constant along the integral curves of $\nabla^{\hbar} \rho$.  In this neighbourhood, the hyperbolic metric is of the form
\[  h = \frac{d \rho^2 + h_{U}(\rho)}{\rho^2} \]
for some family of metrics $h_U$ on $U$ smoothly parametrized by $\rho$.  To prove our main result, we first need to construct a approximate solution to the equation $Q(g,t)=0$.  
This is achieved by adapting the construction of asymptotic solutions to $Q = 0$ from Theorem 2.11 of \cite{GrahamLee}.  Note that we replace $\partial M$ with $U$, $M$ with $U \times (0,\delta)$, and $\Mbar$ with $U \times [0,\delta)$.

\

We will make use of the asymptotic expansion spaces, $A^{m}_{k,\alpha}(M)$ of Graham-Lee.  Recall that a function $f$ is in $A^m_{k,\alpha}(U \times [0,\delta) )$ if it can be written as a sum
\[ f = w_{k+m} + \rho w_{k+m-1} + \cdots + \rho^m w_k, \]
where $w_j \in C^{j,\alpha}(U \times [0,\delta))$, and we emphasize the regularity here is taken up to the boundary $\rho = 0$.  A symmetric $2$-tensor lies in $A^m_{k,\alpha}(\Sigma^2(U \times (0,\delta)))$ if its components in any coordinate system up to the boundary lie in $A^m_{k,\alpha}(U \times (0,\delta))$.  As explained in \cite{GrahamLee}, these spaces are well-defined and Banach spaces under an appropriate norm.  Moreover, the gauge-adjusted Einstein operator
\[ Q: \rho^{-2} A^{m}_{k+2,\alpha}(\Sigma^2(U \times (0,\delta))) \times \rho^{-2} A^{m}_{k+2,\alpha}(\Sigma^2(U \times (0,\delta))) \longrightarrow \rho^{-2} A^{m+2}_{k,\alpha}(\Sigma^2(U \times (0,\delta))) \]
is a smooth map.

\

We first describe an extension procedure that takes $\hat{q}\in C^{k,\alpha}_c(U;\Sigma^2(U))$ of compact support on $U$  to a perturbation of the hyperbolic metric on $M$.  Choose a non-negative $C^{\infty}(\Mdbar)$ bump function $\psi$ so that $\psi \equiv 1$ on $U \times [0,\delta/2)$ and $\psi$ is has compact support in $ (H_0\setminus \partial H_0)\times [0,\delta)$. Now extend $\hat{q}$ to a tensor $\qbar$ on $U \times [0,\delta)$ by first declaring $\nabla^{\hbar} \rho \intprod \qbar = d \rho$ and then parallel translating along the $\hbar$-geodesics normal to $H_0$.  Finally, for $\hat{g} = \hat{h}+\hat{q}$, set 
\[ E(\hat{g}) = \overline{h} + \psi \qbar. \]
We observe that $E(\hhat) = \hbar$.  The extension operator in coordinates essentially yields
\[ E(\ghat) = \hbar + \psi(\rho,y) \qbar(y)_{\alpha \beta} dy^{\alpha} dy^{\beta},\]
and we then define
\begin{align*}
T: C^{k,\alpha}(U;\Sigma^2(U)) &\longrightarrow \rho^{-2} A^0_{k,\alpha}(\Sigma^2(U \times [0,\delta))) \\
T(\ghat) &= \rho^{-2} E(\hat{g}) = h + \rho^{-2} \psi \qbar.
\end{align*}
$T$ is a smooth map of Banach spaces.  Further, the metric $T(\ghat)$ is conformally compact in $U \times [0,\delta)$ since $\rho^2 T(\ghat)$ extends to a $C^{k,\alpha}$ metric on $U \times [0,\delta)$.  

\

Now set $g_{1} = T(\ghat)$.  We first check that $g_{1}$ is a first-order solution to the gauge-adjusted Einstein equation
\[ Q(g_{1},g_{1}) = \Rc(g_{1}) + (n-1) g_{1}. \]
Note that  the gauge term vanishes when both arguments of $Q$ are identical.
Outside of the support of $\psi$ and in particular near any cusp face, the metric $g_{1}$ is identical to the hyperbolic metric and thus $Q(g_{1},g_{1}) = 0$. Inside the support of $\psi$, since the metric $g_{1}$ is conformally compact and asymptotically hyperbolic, the components of $Q(g_{1},g_{1})_{jk}$ are $O(\rho^{-1})$ relative to the coordinate system described above, and in fact $Q(g_1,g_1) \in \rho^{-1} A^1_{k-2,\alpha}$.  Thus $|Q(g_{1},g_{1})|_h = O( \rho )$, and $g_1$ is a first order solution as claimed.  Unfortunately, the fact that $Q(g_1,g_1)$ is in $\rho^{-1} A^1_{k-2,\alpha}$ is insufficient to apply the isomorphism theorem since the corresponding weighted H\"older space\footnote{The precise embedding of the $\rho^{\mu} A^{m}_{k,\alpha}$ spaces into our H\"older spaces is discussed at the end of this section.} will not embed into $L^2$. Hence the next step in the argument is to construct a finite series of metrics that solve the gauge-adjusted Einstein equation to a sufficiently high order.

\

Fixing $g_{1}$ as the background reference metric for $Q$, we now seek a higher order correction $g_{2} = g_{1} + r$ such that $Q(g_{2},g_{1})_{jk} = O(1) \; \; \left( =  O(\rho^{0}) \right)$, or
\[ |Q(g_{2},g_{1})|_h = O( \rho^2 ). \]
Applying Taylor's theorem, Graham and Lee show that for an $r = \rho^{-1} \qhat_1 \in C^{k-1,\alpha}(U \times [0,\delta))$,
\[ Q(g_{1} +  r, g_{1}) = Q(g_{1},g_{1}) + L( \rho^{-1} \qhat_1 ) + O(\rho), \]
where to leading order $L$ is the operator of $\eqref{linearize-scrQ}$.   Thus to obtain $g_{2}$ we must solve for $\qhat_1$ with 
\[ L( \rho^{-1} \qhat_1 ) = -\rho^{-1} \cdot \left\{ Q(g_{1},g_{1}) \right\}_{-1}, \]
where the notation $\{ \cdot \}_{-1}$ represents the coefficient of the $\rho^{-1}$ term in the Taylor expansion of $Q(g_1,g_1)$.  One may solve for the correction tensor $r$ above by a purely algebraic process arising through an analysis of the indicial operators of $L$ on tensors.  Further, $\qhat_1$ and therefore $g_2$ remain compactly supported within the support of $\psi$ since $Q(g_1,g_1)_{jk} = O(\rho^{-1})$ is supported within the support of $\psi$.  Thus the process remains completely local in the sense that $Q(g_2,g_1)_{jk} = 0$ outside the support of $\psi$ and $Q(g_2,g_1)_{jk} = O(1)$ within the support.  We thus obtain the expansion
\[ g_2 = g_1 + r_1 = h + \rho^{-2} \psi \qhat + \rho^{-1} \qhat_1. \]
Thus $g_2 \in \rho^{-2} A^1_{k-1,\alpha}( \Sigma^2(U \times [0,\delta) )$, and
$Q(g_2,q_1) \in \rho^{0} A^1_{k-3,\alpha}$, where we recall that we are assuming that $k > n$.

\

We may continue to improve the order of vanishing of the approximate solution inductively until the first characteristic exponent of $L$ appears. For our dimension convention, we obtain an expansion
\[ g_{n-1} = h + \rho^{-2} \psi \qhat + \rho^{-1} \qhat_1 + \cdots + \rho^{n-3} \qhat_{n-1}, \]
where $Q(g_{n-1}, g_1)_{jk} = O( \rho^{n-3} ),$ or equivalently $|Q(g_{n-1},h)|_h = O(\rho^{n-1})$.  In fact $g_{n-1} = \rho^{-2} A^{n-2}_{k-n+2,\alpha}(\Sigma^2(U \times [0,\delta)))$ and $Q(g_{n-1},g_1) \in \rho^{n-3} A^1_{k-n,\alpha}(\Sigma^2(U \times [0,\delta)))$.

\

Finally, we summarize this discussion.  The map
\begin{align*}
S: C^{k,\alpha}( U ;\Sigma^2(u)) &\longrightarrow  \rho^{-2} A^{n-2}_{k-n+2,\alpha}(\Sigma^2(U \times [0,\delta) ) )\\
S(\ghat) &= g_{n-1}.
\end{align*}
that gives
\[ |Q(S(\ghat), T(\ghat))|_h = O(\rho^{n-1}) \]
is a smooth map of Banach spaces.  The composition $\ghat \to Q( S(\ghat), T(\ghat)) \in \rho^{n-3} A^1_{k-n,\alpha}(\Sigma^2(U~\times~[0,\delta)))$ is additionally a smooth map.

\

It remains to embed the asymptotic expansion spaces into our H\"older spaces.  Recall Proposition 3.3(12) of \cite{GrahamLee} shows that
\[ A^{m}_{k,\alpha} \subset \Lambda^{0}_{k,\alpha} \]
continuously for any $m$, where $\Lambda^{0}_{k,\alpha}$ are the Graham-Lee H\"older spaces.  Consequently, 
\[ \rho^{\mu} A^{m}_{k,\alpha} \subset \Lambda^{\mu}_{k,\alpha} \]
continuously for any $m$.  Away from the cusp hypersurfaces our $C^{k,\alpha}_{\mu}$ spaces are equivalent to $\Lambda^{\mu_0}_{k,\alpha}$ except our conventions for measuring the norm of a symmetric $2$-tensor using the metric instead of the norm of components in smooth background components lead to a shift in weight by $2$ in terms of $\rho$.  Since the tensors above are equal to the hyperbolic metric outside the support of $\psi$, they have infinite order vanishing with respect to the hyperbolic metric at the cusp ends.  Thus a symmetric $2$-tensor that is a compactly supported perturbation of $h$ in $U \times [0,\delta)$ lying in $\rho^{c} \Lambda^{m}_{k,\alpha}$ embeds continuously into $C^{k,\alpha}_{(c+2,\nu,\ldots,\nu)}(M, \Sigma^2(M))$, for any $\nu > 0$.

\
 
Thus given $\qhat \in C_c^{k,\alpha}(U,\Sigma^2(U))$ of compact support in $U$, there exists perturbations of the hyperbolic metric $S(\hhat+\qhat)$ and $T(\hhat+\qhat)$ lying in $C^{k-n+2,\alpha}_{(0,\nu,\ldots,\nu)}(M,\Sigma^2(M))$ and $C^{k,\alpha}_{(0,\nu,\ldots,\nu)}(M,\Sigma^2(M))$ respectively such that 
\begin{equation} \label{eqn:7.1}
Q(S(\hhat+\qhat),T(\hhat+\qhat)) \in C^{k-n,\alpha}_{(n-1,\nu,\ldots,\nu)}(M,\Sigma^2(M)),
\end{equation}
and moreover these maps are smooth in a neighbourhood of $\ghat$.

\

Using this asymptotic solution, we can rephrase our main theorem as follows.  

\begin{theorem}
For $n \geq 4$, and $k > n$, let $(M^n,h)$ be a geometrically finite hyperbolic metric with $n_c$ intermediate rank cusps.  If $n=4$, suppose furthermore that all the cusps are of rank $1$.  Let $U$ be any open set in $H_0$ with closure contained in $H_0\setminus \pa H_0$.  Let $\mu=(\mu_0,\mu_1,\ldots,\mu_{n_c})$ be the multi-weight given by 
$$
        \mu_0=n-2, \mu_i=\frac{1}{n-2}
$$
if $n>4$, and otherwise be a multi-weight as specified by Proposition~\ref{prop:admissible-weight2} if $n=4$.
Then there exists $\vep > 0$ such that for any smooth symmetric $2$-tensor $\qhat$ compactly supported in  $U$ and perturbation $\hat{g} = \hat{h} + \qhat$ with $\|\qhat\|_{k,\alpha} < \vep$, there is an asymptotically hyperbolic metric $g=S(\hat{h}+\qhat)+r$ on $M$ with $r\in C^{k-n,\alpha}_{\mu}(M;\Sigma^2(M))$ such that 
$\rho^2 g$ is continuous on $U$ and $(\rho^2 g)|_{U} = \hat{g}$ and  $g$ is Einstein, i.e. 
\[ \Rc(g) + (n-1)g = 0. \]
\label{mt.1}\end{theorem}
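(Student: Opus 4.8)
\emph{Strategy.} The plan is to solve the gauge-adjusted equation $Q(g,T(\hhat+\qhat))=0$, with $g=S(\hhat+\qhat)+r$, by the implicit function theorem, and then to recover a genuine Einstein metric from the maximum principle. I would work with the map
\[
 \mathcal N(\qhat,r):=Q\bigl(S(\hhat+\qhat)+r,\ T(\hhat+\qhat)\bigr),
\]
defined for $\qhat$ near $0$ in $C^{k,\alpha}_c(U;\Sigma^2(U))$ and $r$ near $0$ in $C^{k-n+2,\alpha}_{\mu}(M;\Sigma^2(M))$, and check it is a smooth map of Banach spaces into $C^{k-n,\alpha}_{\mu}(M;\Sigma^2(M))$. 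Smoothness follows from Corollary~\ref{cor:Qmapping}, the smoothness of $S$ and $T$, and the continuous embeddings of the $\rho^{\mu}A^{m}_{j,\alpha}$ spaces into the weighted H\"older spaces recorded above; that $\mathcal N$ lands in the stated target uses $\mu\ge 0$ together with the inclusion $(n-1,\nu,\dots,\nu)\ge\mu$ (take $\nu=\tfrac{1}{n-2}$), so that the error term of \eqref{eqn:7.1} does sit in $C^{k-n,\alpha}_{\mu}$. Because every correction tensor entering the construction of $S$ is built from $Q(h,h)=\Rc(h)+(n-1)h=0$, we have $S(\hhat)=T(\hhat)=h$, hence $\mathcal N(0,0)=Q(h,h)=0$.

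\emph{Inverting the linearization.} The partial differential $D_r\mathcal N(0,0)$ is the linearization at $r=0$ of $r\mapsto Q(h+r,h)$. Using $\Delta_L^h u=\nabla^*\nabla u-2nu+2(\tr^h u)h$ from \S\ref{s:linear} and the fact that the gauge term is designed to cancel the $\delta^*_h\delta_h(G_h\,\cdot\,)$ contribution, one gets $D_r\mathcal N(0,0)=L$, the operator of \eqref{linearize-scrQ}: on the pure-trace part $uh\in\CI(M;\sG)$ it is $\tfrac{1}{2}(\Delta^h+2(n-1))$, and on the trace-free part $r_0\in\CI(M;\Sigma^2_0(M))$ it is $\tfrac{1}{2}(\Delta^h-2)$. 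For the weight $\mu$ of the statement, the asymptotic estimate (H2) holds for $K=-2$, hence also for $K=2(n-1)$, and $C^{0}_{\mu}(M)\subset L^2(M)$, $C^{0}_{\mu}(M;\Sigma^2_0(M))\subset L^2(M;\Sigma^2_0(M))$ by Lemma~\ref{l2-cut}. So Corollary~\ref{sclp.1} (in its $\sG$ form) and Corollary~\ref{thm:P2-iso} give that
\[
 L:C^{k-n+2,\alpha}_{\mu}(M;\Sigma^2(M))\longrightarrow C^{k-n,\alpha}_{\mu}(M;\Sigma^2(M))
\]
is an isomorphism ($k>n$ guarantees at least three derivatives on the source). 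The implicit function theorem then produces $\vep>0$ such that, for every smooth $\qhat$ compactly supported in $U$ with $\|\qhat\|_{k,\alpha}<\vep$, there is a unique small $r=r(\qhat)\in C^{k-n+2,\alpha}_{\mu}(M;\Sigma^2(M))\subset C^{k-n,\alpha}_{\mu}(M;\Sigma^2(M))$, depending smoothly on $\qhat$, with $\mathcal N(\qhat,r)=0$.

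\emph{From the gauged equation to the Einstein metric.} Put $g:=S(\hhat+\qhat)+r$, so that $Q(g,T(\hhat+\qhat))=0$ and, for $\vep$ small, $g$ is positive definite. As $g-h$ lies in a weighted H\"older space with nonnegative $H_0$-weight and positive cusp weights, and the normalization $\nabla^{\hbar}\rho\intprod\qbar=d\rho$ forces $|d\rho|^2_{\rho^2g}=1$ on $H_0$, the metric $g$ is asymptotically hyperbolic with cusps in the sense of \S\ref{s:func-spaces}. The identity $Q(g,T(\hhat+\qhat))=0$ reads $\Rc(g)+(n-1)g=\delta^*_g\omega$ for the $1$-form $\omega=g\,T(\hhat+\qhat)^{-1}\bigl(\delta_g(G_g\,T(\hhat+\qhat))\bigr)$, which decays at infinity because $g$ and $T(\hhat+\qhat)$ agree to leading order. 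Applying the Bianchi operator $\beta_g$ and the contracted second Bianchi identity, $\omega$ is seen to solve a homogeneous elliptic equation whose leading part is a positive multiple of $\Delta^h+(n-1)$ on $1$-forms; the maximum principle (as in \cite[\S2]{GrahamLee}) forces $\omega\equiv0$, so $\Rc(g)+(n-1)g=0$. Finally, from $S(\hhat+\qhat)=h+\rho^{-2}\psi\qhat+O(\rho^{-1})$ with $\psi\equiv1$ near $U$ one gets that $\rho^2S(\hhat+\qhat)$ extends continuously up to $\{\rho=0\}$ with $(\rho^2S(\hhat+\qhat))|_{TU}=\hhat+\qhat$ at $\rho=0$, while $\rho^2r$ extends continuously by zero along $U$ since $\mu_0=n-2>0$; hence $\rho^2g$ is continuous on $U$ with $(\rho^2g)|_{TU}=\hhat+\qhat=\ghat$, which completes the proof.

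\emph{Main obstacle.} The substantive work — the uniform boundary Schauder estimate of \S\ref{s:schauder} and the isomorphism theorems of \S\ref{s:linear}, together with the analysis of which weights satisfy (H2) — is carried out before this point, so the only genuinely delicate step here is the bookkeeping of weights and regularity: verifying that $\mathcal N$ really maps into $C^{k-n,\alpha}_{\mu}$ (this is where $k>n$ and $(n-1,\nu,\dots,\nu)\ge\mu$ are used), that the $\rho^{\mu}A^{m}_{j,\alpha}$ spaces embed into the chosen weighted H\"older spaces while still sitting in $L^2$, and that the gauge $1$-form $\omega$ decays fast enough for the maximum principle to bite. The exclusion of maximal rank cusps — and of rank-$2$ cusps when $n=4$ — is forced here as well: by \eqref{a.02} and Proposition~\ref{prop:admissible-weight2} no positive weight satisfies the asymptotic estimate in such an end, so none of the isomorphism theorems is available there.
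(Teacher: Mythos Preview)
Your proposal is correct and follows essentially the same approach as the paper: build the approximate solution $S(\hhat+\qhat)$, apply an inverse/implicit function theorem using the isomorphism of $L$ on $C^{k-n+2,\alpha}_{\mu}\to C^{k-n,\alpha}_{\mu}$, and then kill the DeTurck field $\omega$ by the maximum principle. The only presentational difference is that the paper introduces an auxiliary real parameter $t$ and works with the map $(t,r)\mapsto(t,Q(S(\hhat+t\qhat)+r,T(\hhat+t\qhat)))$ on $\bR\times C^{k-n+2,\alpha}_{\mu}$, invoking the inverse function theorem, whereas you apply the implicit function theorem directly in $r$ with $\qhat$ as parameter; these are equivalent, and your formulation is arguably cleaner. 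Your treatment of the gauge step is slightly less explicit than the paper's (which records the improved decay $r'_t\in C^{k-n+2,\alpha}_{(\mu_0+1,\mu_1,\ldots,\mu_{n_c})}$ for $g-\tau$, derives the differential inequality $\Delta|\omega|_g^2\le 2\eta|\omega|_g^2$ with $\eta<0$ coming from $\Rc(g)<0$, and then applies the maximum principle), but the content is the same.
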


Before starting the proof, we discuss the strategy.  Given a boundary metric $\qhat$ and the approximate solutions described above, we will use the inverse function theorem in order to add a ``correction" tensor $r$ that yields an exact solution to the gauge-adjusted Einstein equation.  In order to apply the isomorphism results of Propositions \ref{prop:admissible-weight} and  \ref{prop:admissible-weight2}, we require the correction term to lie in H\"older spaces that embed into $L^2$.  This means the weight in the boundary defining function $\rho$ for $H_0$ must be at least $(n-1)/2$.  It is possible to construct an approximate solution with this weight in $\rho$ using roughly $C^{n/2}$ control of the boundary norm of $\qhat$, and we leave precise details to the interested reader.  Instead, for simplicity, we fix the weight at $H_0$ to be $\mu_0 = n-2$, which requires $C^k$ control of $\ghat$ with $k > n$ as described in the asymptotic solution above.
 
\

The remainder of this section is occupied with the proof of the main theorem.

\subsection{The inverse function theorem argument}

For some $\mu$ to be specified, define an open subset $\sB$ of the Banach space $\bR \times  C^{k-n+2,\alpha}_{\mu}(M; \Sigma^2(M))$ by
\begin{align*}
\sB &:= \{ (t,r): \hhat+t\qhat, S(\hhat+t\qhat),S(\hhat+t\qhat)+r \; \mbox{are positive definite} \}
\end{align*}
and consider the map
\begin{align*}
\sQ: \sB & \longrightarrow \bR \times  C^{k-n,\alpha}_{\mu}(M; \Sigma^2(M)), \\
 \sQ( t, r ) &= ( \; t, Q( \; S( \;\hhat+t\qhat \;) + r,  T(\; \hhat+t\qhat \;) \;) \; ). 
\end{align*}

By equation \eqref{eqn:7.1}, the corresponding discussion in \cite{GrahamLee} adapted to our notation and Corollary \ref{cor:Qmapping}, this is a smooth map with the property that $\sQ(0,0) = (0,0)$ and the linearization at this point is
\begin{align*}
 D\sQ_{(0,0)} ( \tau, r ) &= ( \tau,D_1 Q_{(h,h)} ( \tau DS_{\hat{h}} \hat{q} + r) + D_2 Q_{(h,h)}( \tau DT_{\hat{h}} \hat{q}) ) \\
 &= (\hat{p}, L r + \tau J \hat{q} ),
 \end{align*}
with $L$ defined in equation \eqref{linearize-scrQ} and $J \hat{q} =  D_1 Q_{(h,h)} DS_{\hat{h}} \hat{q}  + D_2 Q_{(h,h)}DT_{\hat{h}} \hat{q}$, exactly as in Graham-Lee.  Thus a unique solution to 
\[  D\sQ_{(\hat{h},0)} (\hat{q},r) = (\hat{w},v) \]
is given by $\hat{q} = \hat{w}$ and $r = L^{-1}(v-J\hat{w})$, provided we can invert $L$.

\

Using Propositions \ref{prop:admissible-weight} and  \ref{prop:admissible-weight2}  and our choice of  multi-weight $\mu$, our isomorphism theorem provides a bounded inverse for $L$ between weighted H\"older spaces.  Thus if $\hat{g}$ is sufficiently close to $\hat{h}$, i.e. if $\qhat$ is sufficiently small in $(k-n,\alpha)$-norm and $|t|<1$, there is a family of metrics $g_t$ for $t\in (-1,1)$ of the form $g_t = S(\hhat+t\qhat) + r_t$, with $t\mapsto r_t \in C^{k-n+2,\alpha}_{\mu}(M;\Sigma^2(M))$, such that $Q( g_t, T(\hhat+t\qhat) ) = 0$, by the inverse function theorem.  Note that by construction $\rho^2 g_t|_U = \hhat+ t\qhat$ since the tensor $\rho^2 r_t \in \rho^2 C^{k-n+2,\alpha}_{\mu}(M;\Sigma^2(M))$ vanishes on $H_0$, and the choice of weight vector ensures that the asymptotic structure of the metric is preserved.

\

Note that the metric $g$ is smooth in $M$ by interior elliptic regularity.

\

\subsection{Returning to the Einstein equation}

The final step in the proof is to argue that $g_t = S(\hhat+t\qhat) + r_t$ is an Einstein metric.  By shrinking the neighbourhood obtained from the inverse function theorem, we can always ensure that $g$ can be made as close to the hyperbolic metric $h$ as desired in $C^{k-n,\alpha}_{\mu}$-norm, and as a consequence $\Rc(g) < 0$.

\

Set $\tau = T(\hhat+t\qhat)$.  Recalling that $Q(g_t,\tau) = 0$ and that the leading part of $S(\hhat+t\qhat)$ is given by $T(\hhat+t\qhat)$, we find that $g_t = \tau + r_t'$, for $t\mapsto r_t' \in C^{k-n+2,\alpha}_{\delta}(M;\Sigma^2(M))$, where $\delta = (\mu_0 + 1, \mu_1, \ldots, \mu_{n_c} )$, in other words, $r_t'$ has faster decay at the $H_0$ hypersurface.

\

Let $\omega_t$ be the gauge-breaking DeTurck vector field
\[ \omega_t = g_t \tau^{-1} \delta_{g_t} G_{g_t} \tau. \]
Writing $\tau = g_t - r_t'$, and using the mapping properties of these operators shows that $\omega_t \in C^{k-1,\alpha}_{\delta}(M;\Sigma^2(M))$.  Thus $|\omega|^2_g = O(\sigma^{2 \delta})$, i.e. $\omega$ vanishes in each end of $M$.  Applying the Bianchi operator $\delta_{g_t} G_{g_t}$ to $Q(g_t,\tau) = 0$ and commuting derivatives shows that $|\omega_t|^2_g$ satisfies a differential inequality in $M$:
\[ \Delta |\omega_t|^2_g \leq 2 \eta |\omega_t|_g^2, \]
where $\eta$ is a negative constant that comes from the hypothesis on Ricci curvature.  As $\omega_t \to 0$ in each end, it follows that $\omega_t = 0$ by the classical maximum principle.  Thus
\[ 0 = Q(g_t,\tau) = \Rc(g_t) + (n-1)g_t + \delta_{g_t}^* \omega_t = \Rc(g_t) + (n-1) g_t, \]
and $g_t$ is Einstein.

\

This concludes the proof of Theorem \ref{mt.1}.
\bibliographystyle{amsalpha}
\bibliography{GFPE}
\end{document}